\documentclass{article}




\usepackage[centertags]{amsmath}
\usepackage{amsfonts}
\usepackage{amssymb}
\usepackage{amsthm}
\usepackage{newlfont}
\usepackage{mathtools}
\usepackage[top=1in, bottom=1in, left=1in, right=1in]{geometry}
\usepackage{tikz}

%

%
\theoremstyle{plain}
\newtheorem{theorem}{Theorem}[section]
\newtheorem{proposition}[theorem]{Proposition}
\newtheorem{corollary}[theorem]{Corollary}
\newtheorem{lemma}[theorem]{Lemma}

\theoremstyle{definition}
\newtheorem{definition}[theorem]{Definition}

\newcommand{\R}{\mathbb{R}}
\newcommand{\N}{\mathbb{N}}
\newcommand{\Z}{\mathbb{Z}}

\newcommand{\Q}{\mathbb{Q}}
\newcommand{\F}{\mathbb{F}}
\newcommand{\Cy}{\mathcal{C}}
\newcommand{\SPAN}{\operatorname{span}}
\newcommand{\de}{\partial}

\newcommand{\Hidden}[1]{}

\newcommand{\Bu}{\mathcal{B}}

\newcommand{\Hm}[1]{\leavevmode{\marginpar{\tiny%
$\hbox to 0mm{\hspace*{-0.5mm}$\leftarrow$\hss}%
\vcenter{\vrule depth 0.1mm height 0.1mm width \the\marginparwidth}%
\hbox to 0mm{\hss$\rightarrow$\hspace*{-0.5mm}}$\\\relax\raggedright
#1}}}

\begin{document}

\title{Relationships between cycle spaces, gain graphs, graph coverings, fundamental groups, path homology, and graph curvature}
\author{Mark Kempton\footnote{Center of Mathematical Sciences and Applications, Harvard University, Cambridge MA, USA. mkempton@cmsa.fas.harvard.edu. Supported by Air Force Office of Scientific Research Grant FA9550-13-1-0097: Geometry and Topology of Complex Networks.}~~~~~Florentin M\"unch\footnote{Universit\"at Potsdam, Potsdam, Germany. chmeunch@uni-potsdam.de.  Supported by the German National Merit Foundation}~~~~~Shing-Tung Yau\footnote{Harvard University, Cambridge MA, USA. yau@math.harvard.edu.}}
\date{}
\maketitle

\begin{abstract}
We prove a homology vanishing theorem for graphs with positive Bakry-\'Emery curvature, analogous to a classic result of Bochner on manifolds \cite{Bochner}.  Specifically, we prove that if a graph has positive curvature at every vertex, then its first homology group is trivial, where the notion of homology that we use for graphs is the path homology developed by Grigor'yan, Lin, Muranov, and Yau \cite{Grigoryan2}. 
We moreover prove that the fundamental group is finite for graphs with positive Bakry-\'Emery curvature, analogous to a classic result of Myers on manifolds \cite{Myers1941}. 
The proofs draw on several separate areas of graph theory.  We study graph coverings, gain graphs, and cycle spaces of graphs, in addition to the Bakry-\'Emery curvature and the path homology.  The main results follow as a consequence of several different relationships developed among these different areas.  Specifically, we show that a graph with positive curvature can have no non-trivial infinite cover preserving 3-cycles and 4-cycles, and give a combinatorial interpretation of the first path homology in terms of the cycle space of a graph.  We relate cycle spaces of graphs to gain graphs with abelian gain group, and relate these to coverings of graphs.  Along the way, we prove other new facts about gain graphs, coverings, and cycles spaces that are of related interest.  Furthermore, we relate gain graphs to graph homotopy and the fundamental group developed by Grigor'yan, Lin, Muranov, and Yau \cite{Grigoryan_homotopy}, and obtain an alternative proof to their result that the abelianization of the fundamental group is isomorphic to the first path homology over the integers.
\end{abstract}

\textbf{Keywords:} discrete curvature, graph homology, gain graph, cycle space, graph covering, graph homotopy

\section{Introduction}

A significant theme in much of graph theory in recent years has been the application of tools and ideas from continuous geometry to discrete settings, most specifically to graphs.  For instance, a classical example resulting from this way of thinking is the well-know Cheeger inequality (see for instance \cite{Chung97}), which proves an isoperimtric inequality for graphs that was originally formulated for Riemannian manifolds.  There has been growing interest in recent years both in the approximation of continuous spaces by discrete ones, and in the understanding of graphs via their geometric properties. 

One of the principal developments in this area concerns curvature for graphs.  Numerous notions of curvature on graphs have been put forward \cite{Forman,ollivier2009ricci}.  An important and very general notion of curvature for graphs has been defined via various formulas due to Bakry and \'Emery, which is called the Bakry-\'Emery curvature of a graph (see \cite{BakryEmery85,schmuckenschlager1998curvature,
LinYau10}).

In addition, there are various notions of homology and cohomology for graphs.  Recent work has introduced one such theory called the path homology \cite{Grigoryan1}.  Path homology has been shown to be a non-trivial homology theory which is invariant under a notion of homotopy for graphs \cite{Grigoryan_homotopy}.  Using this homotopy theory, in \cite{Grigoryan_homotopy} the fundamental group for a graph is defined, and it is shown that the first path homology is isomorphic to the abelianization of this fundamental group.  Furthermore, it satisfies nice functorial properties, namely the K\"unneth formula holds for graph products \cite{Grigoryan3}.  For these reasons, it seems that the path homology is a more appropriate notion of homology for graphs than others that have been proposed.  See \cite{Grigoryan3} for a discussion of various homology theories for graphs and the advantages of the path homology.  

In this paper, we prove an important connection between these two notions, namely we prove a homology vanishing theorem for graphs with positive Bakry-\'Emery curvature.  Homology vanishing theorems are ubiquitous in continuous geometry, and give important structural information about manifolds.  Our vanishing theorem is analogous to a fundamental result of Bochner on manifolds \cite{Bochner} .


\begin{theorem}\label{hom_van}
If a finite graph $G$ has positive Bakry-\'Emery curvature at every vertex, then its first path homology group is trivial.
\end{theorem}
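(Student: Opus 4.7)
The plan is to argue by contrapositive: assuming $H_1(G;\Z) \neq 0$, I will produce a vertex of non-positive Bakry-\'Emery curvature. The strategy, as signposted by the abstract, is to convert a non-trivial homology class into a non-trivial infinite covering of $G$ that preserves $3$- and $4$-cycles, transfer positive curvature to the cover, and then contradict this via a Myers-type diameter bound.

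First, I would invoke the combinatorial interpretation of $H_1(G;\Z)$ promised in the introduction, presenting it as a quotient of the cycle space of $G$ by the subspace generated by triangles and squares. A non-zero class then gives a concrete cycle of $G$ that cannot be written as an integer combination of $3$- and $4$-cycles. Using the correspondence between cycle spaces and abelian gain graphs, I would convert this class into a $\Z$-valued gain assignment on $G$ whose total gain vanishes around every triangle and every square but is non-trivial around at least one longer cycle.

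The associated covering $\tilde G \to G$ has fiber $\Z$; since the gain is not a coboundary, the monodromy subgroup of $\Z$ is non-zero, and the connected component of $\tilde G$ containing any chosen base vertex is an infinite, non-trivial cover of $G$. Because the gain vanishes around every triangle and every square, the preimage of each such short cycle is a disjoint union of copies rather than a longer path, so the covering preserves $3$- and $4$-cycles in the sense required. I would then verify that the Bakry-\'Emery curvature at a vertex depends only on the combinatorial structure of its $2$-ball, in particular on the $3$- and $4$-cycles through that vertex; since $\tilde G \to G$ is a local isomorphism that preserves this structure, $\tilde G$ inherits uniform positive Bakry-\'Emery curvature from $G$.

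Finally, I would invoke (or prove) a Myers-type theorem for graphs with positive Bakry-\'Emery curvature asserting a uniform diameter bound. Since $\tilde G$ is locally finite as a cover of a finite graph, a uniform diameter bound forces $\tilde G$ to be finite, contradicting the infiniteness of the cover. The main obstacles are, first, setting up the cycle-space/gain-graph dictionary carefully enough that a non-trivial class in $H_1$ really does produce a $\Z$-gain whose triangle and square gains vanish; and second, the Myers-type diameter estimate, which in the discrete Bakry-\'Emery setting typically requires a delicate semigroup or gradient-estimate argument, although it is of a familiar type in the discrete Bochner literature.
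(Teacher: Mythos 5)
Your proposal is correct and follows essentially the same route as the paper: the paper proves $H_1(G,\F)\cong \Cy(G,\F)/TS$, uses the equivalence between cycle bases and circuit generators for the additive group of $\F$ to extract a gain function balanced on triangles and squares but unbalanced on some longer cycle, forms the ordinary derived cover (which is infinite since the field has characteristic $0$), and contradicts the Bonnet--Myers diameter bound. The only caveat is that the paper works over a field of characteristic $0$ rather than $\Z$, which matters: a purely torsion class in $\Cy(G,\Z)/TS$ would not yield a $\Z$-gain of infinite order on any cycle, so your dualization step should be carried out over $\Q$ (or any characteristic-$0$ field) as the paper does.
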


It turns out that Bakry-\'Emery curvature on graphs is also compatible with the notion of homotopy and fundamental group following \cite{Grigoryan_homotopy}.
Our homotopy theorem is analogous to a fundamental result of Myers on manifolds \cite{Myers1941} .

\begin{theorem}\label{thm:fundamentalCurv}
If a finite graph $G$ has positive Bakry-\'Emery curvature at every vertex, then its fundamental group $\pi_1(G)$ is finite.
\end{theorem}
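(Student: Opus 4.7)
The plan is to deduce Theorem~\ref{thm:fundamentalCurv} from Theorem~\ref{hom_van} together with the covering-theoretic machinery advertised in the abstract. The key observation is that the fundamental group of a finite graph is finite if and only if its universal cover (in the sense corresponding to $\pi_1$) is finite, so we want to rule out infinite universal covers for positively curved graphs.

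First, I would set up the relationship between the fundamental group $\pi_1(G)$ of Grigor'yan--Lin--Muranov--Yau and covering spaces of $G$. Since triangles and squares are null-homotopic in the Grigor'yan--Lin--Muranov--Yau homotopy theory, subgroups of $\pi_1(G)$ should correspond precisely to graph coverings of $G$ that preserve $3$-cycles and $4$-cycles. In particular, the universal cover $\widetilde G$ (corresponding to the trivial subgroup) is itself a covering of $G$ preserving $3$-cycles and $4$-cycles, and the number of sheets of $\widetilde G \to G$ equals $|\pi_1(G)|$. Establishing this covering/subgroup dictionary in the combinatorial setting is precisely the kind of relationship the paper develops via gain graphs, so I would invoke it here.

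Second, I would observe that the Bakry--\'Emery curvature at a vertex $v$ depends only on the combinatorial structure of the $2$-ball $B_2(v)$, i.e.\ on the triangles and squares incident to $v$. Hence any covering $p:\widetilde G \to G$ that preserves $3$-cycles and $4$-cycles induces, for every $\tilde v \in \widetilde G$, an isomorphism of the relevant local structure between $B_2(\tilde v)$ in $\widetilde G$ and $B_2(p(\tilde v))$ in $G$. Consequently, Bakry--\'Emery curvature is inherited by such coverings, and in particular $\widetilde G$ also has positive Bakry--\'Emery curvature at every vertex.

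Now I would apply the result (stated in the abstract and presumably proved earlier in the paper) that a graph with positive Bakry--\'Emery curvature admits no non-trivial infinite cover preserving $3$- and $4$-cycles. Combined with the previous two steps, this forces $\widetilde G$ to be finite, whence $|\pi_1(G)|$ is finite.

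The main obstacle is the first step: making rigorous the covering/subgroup correspondence for $\pi_1$ as defined via the Grigor'yan--Lin--Muranov--Yau homotopy theory. Classical covering space theory is continuous, and transporting it to the purely combinatorial setting requires care to ensure that $\widetilde G$ actually exists, is unique up to isomorphism, preserves $3$- and $4$-cycles, and has sheet number equal to $|\pi_1(G)|$. Once this dictionary is in place, the curvature-invariance under such coverings and the no-infinite-cover theorem do the remaining work essentially for free.
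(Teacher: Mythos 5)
Your proposal follows essentially the same route as the paper: positive curvature rules out infinite coverings preserving triangles and squares (via local isomorphism of such covers and the discrete Bonnet--Myers diameter bound), and finiteness of $\pi_1(G)$ is then equivalent to finiteness of the universal object among such coverings. The ``main obstacle'' you correctly flag --- the dictionary between $\pi_1(G)$ and $3$-/$4$-cycle-preserving coverings --- is exactly what the paper supplies in Section~\ref{sec:homotopy}, by identifying $\pi_1(G,\Bu)$ with the presented group $\Gamma(G,T,\Bu)$ and hence, via DeVos--Funk--Pivotto, with the topological fundamental group of the $2$-complex obtained by attaching a disc to each triangle and square, whose covering theory is classical.
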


Showing that these classical theorems from geometry hold for graphs reinforces the idea that the path homology is the correct notion of homology for graph theory.  

While Theorems~\ref{hom_van} and \ref{thm:fundamentalCurv} can be considered the main results of this paper, our proofs are executed by developing relationships between several different areas of graph theory, several of which are interesting on their own.  Particularly, we develop relationships between gain graphs which are graphs with edges labeled with elements of a group, and the cycle space of a graph, which is the space of linear combinations of incidence vectors of cycles of a graph.  The cycle space of a graph always has a basis of size equal to the cyclomatic number of the graph, but there are various different classes of cycle bases depending on certain properties, which may not be satisfied for all bases.  See \cite{liebchen2007classes} for a discussion of different kinds of cycle basis, and for a determinantal characterization of these various bases.  One of our contributions is to give a new kind of cycle basis based off of a gain graph, which we call a $\Gamma$-circuit generator, where $\Gamma$ is the group associated with a gain graph.  We prove relationships between $\Gamma$-circuit generators and other classes of cycle bases, and give a determinantal  characterization of $\Gamma$-circuit generators.  We also relate gain graphs to covering of graph that preserve cycles.  Furthermore, we relate the path homology of a graph to the cycle space by showing that the first path homology is isomorphic to the cycle space modulo the space generated by simple cycles of length 3 and 4. This gives us a combinatorial interpretation of the first path homology group.  It is an open question to find a similar nice interpretation of the path homology groups beyond the first. 

In addition, we investigate the fundamental group as defined in \cite{Grigoryan_homotopy}, and give an interpretation of this group relating to gain graphs. Then, via results of \cite{DeVosFunkPivotto14}, we are able to describe this fundamental group as the fundamental group of the topological space obtained by attaching a 2-cell to each cycle of length 3 or 4 in $G$.
This allows us to connect graph coverings with the fundamental group.
  Our results also give an alternative proof of the result in \cite{Grigoryan_homotopy} that the first path homology over $\Z$ is isomorphic to the abelianization of this fundamental group.

\subsection{Organization and main results}

The remainder of this paper will be organized as follows.  In Section \ref{sec:prelim}, we will give the technical preliminaries, including definitions and known results concerning gain graphs, cycle bases, covers of graphs, graph homology, and graph curvature.  In addition to relevant known results, we present some new lemmas that will be useful later.  

In Section \ref{sec:CurvCover}, we connect the Bakry-\'Emery curvature to coverings of graph.  Using a known diameter bound involving the curvature \cite{liu2016bakry,FathiYan2018}, we prove that a graph with positive curvature has no infinite covering that preserves 3- and 4-cycles. 

Section \ref{sec:CoverCircuit} explores the relationship between gain graphs and covers of graphs. In particular, we use a construction due to \cite{GrossTucker77} to produce coverings of graphs corresponding to gain functions, and show that those coverings preserve precisely the cycles that are balanced under the gain function. Furthermore, every graph covering arises in this way.

Section \ref{sec:BasisGain} investigates the relationships between cycle spaces and gain graphs.  
In particular we prove that a collection of circuits (with size the cyclomatic number of the graph) is an $\F$-cycle basis for the cycle space if and only if is is an $\Gamma$-circuit generator, where $\Gamma$ is the additive group of the field $\F$.  Furthermore, given such a collection of circuits $\Bu$, we associate a determinant that we call $\det \Bu$, and show that for an Abelian group $\Gamma$, that $\Bu$ is a $\Gamma$-circuit generator if and only if $g^{\det\Bu}\neq e_\Gamma$ for all non-identity elements $g\in \Gamma$.  This characterizes $\Gamma$-circuit generators for Abelian groups via a determinant, similar to the situation for other classes of cycle basis.

 In Section \ref{sec:BasisHomology}, we prove a relationship between the path first homology group of a graph and its cycle space. Namely, we show that the first homology group $H_1(G,\F)$, for a field $\F$, is isomorphic to the $\F$-cycle space modulo the space generated by all 3- and 4-cycles.  This gives an interpretation of what the first path homology group is ``measuring" in terms of a well-studied combinatorial concept--the cycle space.  
With these results, we prove Theorem \ref{hom_van}.  We further discuss the converse and possible weakening of the hypotheses of Theorem \ref{hom_van}.  We end this section with a discussion of a different homology theory for graphs--the clique homology.  We give a similar combinatorial interpretation of the first clique homology group, and point out that Theorem \ref{hom_van} does not hold if we use the clique homology instead of the path homology.  This strengthens the notion that the path homology has many advantages over other graph homology theories.  

Finally, Section \ref{sec:homotopy} investigates the fundamental group of a graph $\pi_1(G)$ under the notion of homotopy from \cite{Grigoryan_homotopy}.  This notion of homotopy treats 3- and 4-cycles as contractible subgraphs.  We show that this fundamental group is isomorphic to a quotient of a canonical gain group balanced on the set of 3- and 4-cycles.  Indeed, we define a generalization of the fundamental group, treating any arbitrary collection of cycles as contractible, and show that this is similarly isomorphic to a quotient of a canonical gain graph with the same set of cycles.  
As in classical topology, we connect the fundamental group with the universal covering allowing us to prove Theorem~\ref{thm:fundamentalCurv}.
We also show how our results give an alternate proof Theorem 4.23 of \cite{Grigoryan_homotopy}, which says that $H_1(G,\Z)$ is the abelianization of $\pi_1(G)$.

\section{Preliminaries}\label{sec:prelim}

\subsection{Gain graphs and circuit generators}

Let $G=(V,E)$ an undirected graph. We will denote by $\vec E$ the set that contains two directed arcs, one in each direction, for each edge in $E$. Let $\Gamma$ be a group.
A \emph{gain graph} is a triple $(G,\phi,\Gamma)$ where
$\phi: \vec{E} \to  \Gamma$ is a map satisfying $\phi(xy) = \phi(yx)^{-1}$ for all edges $(xy) \in \vec{E}$.  The map $\phi$ is called the \emph{gain function} of the gain graph.
Denote by $\Phi(G,\Gamma)$ the set of all gain functions from the graph $G$ to the group $\Gamma$.

Gain graphs have also been referred to as \emph{voltage graphs}, and are special cases of \emph{biased graphs} (see \cite{Zaslavsky}).
When $\Gamma$ is a group of invertible linear transformations, they are also called \emph{connection graphs} \cite{ChungZhaoKempton}, and the map $\phi$ can be considered as a connection corresponding to a vector bundle on the graph \cite{Kenyon}.

For the most part, we will take terminology about gain graphs and biased graphs from \cite{Zaslavsky}. A circuit or simple cycle $C$ is a simple closed walk $(x_1,\ldots,x_n)$.  
Throughout the paper, we will refer to circuits of length 3 as ``triangles," and circuits of length 4 as ``squares."
We write $\mathcal S(G)$ for the set of all circuits in $G$. A \emph{theta graph} is the union of three internally disjoint simple paths that have the same two distinct endpoint vertices.
A \emph{biased graph} is a pair $(G,\mathcal B)$ where $\mathcal B\subset \mathcal S(G)$ is a set of distinguished circuits, called \emph{balanced circuits}, that form a \emph{linear subclass} of circuits, that is, $\Bu$ has the property that if any two circuits of a theta graph are in $\mathcal B$, then so is the third.  
We say $\Bu \subset \mathcal S(G)$ is a \emph{cyclomatic circuit set} if the cardinality of $\Bu$ equals the cyclomatic number of $G$, $|E|-|V|+1$.

Gain graphs are biased graphs in a natural way.
Define define the \emph{order} of a circuit $C=(x_1,\ldots,x_n)$ under a gain function $\phi$ via
\[
o_\phi(C) := \inf \left\{r>0:  \left[ \phi(x_1x_2) \ldots \phi(x_{n-1}x_n) \phi(x_n x_1) \right]^r = e_\Gamma \right\}.
\]
We say the gain function $\phi$ is \emph{balanced} on a circuit $C=(x_1,\ldots,x_n)$ if $o_\phi(C)=1$.
Denote by $\mathcal B(\phi)$ the set of balanced circuits.  Then $(G,\mathcal B(\phi))$ defines a biased graph.

\begin{definition}[$\Gamma$-circuit generator]
We say a set $\Bu$ of circuits is a \emph{$\Gamma$-circuit generator} if for all $\phi \in \Phi(G,\Gamma)$, $\phi$ balanced on $\Bu$ implies that $\phi$ is balanced on the entire graph $G$.  
\end{definition}

\begin{definition}[Canonical gain graph]
Let $G=(V,E)$ be a graph and let $\mathcal B$ be a set of circuits.
We define the group $\Gamma(G,\mathcal B)$ via the presentation
\[
\Gamma(G,\mathcal B) = \langle \vec E | \mathcal B \rangle,
\]
i.e., $\Gamma(G,\mathcal B)$ is generated by the oriented edges and each circuit $C=(x_1,\ldots,x_n) \in \mathcal B$ gives a relation
$(x_1x_2) \ldots (x_{n-1}x_n)(x_n x_1) = e_\Gamma$ where we identify $(xy)=(yx)^{-1} \in \vec E$.
There is a natural gain function $\phi_{\mathcal B}$  given by the natural mapping of $\vec E$ into $\Gamma(G,\mathcal B)$, and the corresponding gain graph $(G,\phi_{\mathcal B},\Gamma(G,\mathcal B))$ is called the \emph{canonical gain graph} associated with the biased graph $(G,\mathcal B)$.  
\end{definition}
In \cite{Zaslavsky2}, it is shown that the canonical gain graph satisfies a universal property with respect to gain graphs balanced on $\mathcal B$.

\begin{proposition}[Theorem 2.1 of \cite{Zaslavsky2}]\label{prop:universal}
Given any gain graph $(G,\psi,\Gamma)$ such that $\psi$ is balanced on $\Bu$, then there exists a homomorphism $h:\Gamma(G,\Bu)\rightarrow\Gamma$ such that $\psi = h \circ \phi_\Bu$ as defined above.
\end{proposition}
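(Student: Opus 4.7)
The plan is to invoke the standard universal property of groups defined by a presentation. I would first define a map $\tilde h$ on the free group $F(\vec E)$ on the symbol set $\vec E$ by setting $\tilde h(e) := \psi(e)$ for every oriented edge $e \in \vec E$, and extending multiplicatively to words. Since $\Gamma(G,\Bu)$ is by construction the quotient of $F(\vec E)$ by the normal closure of the relations indicated in the presentation $\langle \vec E \mid \mathcal B \rangle$, to produce the desired homomorphism $h$ it suffices to check that $\tilde h$ sends every defining relation to the identity of $\Gamma$.

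There are two families of relations to verify. The first is the identification $(xy) = (yx)^{-1}$ that is built into the meaning of $\vec E$ in the presentation. Since $\psi$ is a gain function, the gain-function axiom gives $\psi(xy) = \psi(yx)^{-1}$, so $\tilde h$ respects this identification automatically. The second family consists of one relation per circuit $C = (x_1,\ldots,x_n) \in \Bu$, namely $(x_1x_2)\cdots(x_{n-1}x_n)(x_n x_1) = e_\Gamma$. Applying $\tilde h$ yields $\psi(x_1x_2)\cdots\psi(x_{n-1}x_n)\psi(x_nx_1)$, which by the definition of $o_\psi(C)$ equals $e_\Gamma$ precisely because $\psi$ is balanced on $C$, i.e.\ $o_\psi(C) = 1$. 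Hence both families of defining relations lie in the kernel of $\tilde h$, so $\tilde h$ factors through the quotient to produce a well-defined homomorphism $h : \Gamma(G,\Bu) \to \Gamma$.

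To conclude, I would check the factorization property: for every oriented edge $e \in \vec E$, by construction of $\phi_\Bu$ as the natural map of $\vec E$ into $\Gamma(G,\Bu)$, one has $(h \circ \phi_\Bu)(e) = h(\phi_\Bu(e)) = \tilde h(e) = \psi(e)$, so $\psi = h \circ \phi_\Bu$ on $\vec E$, which is all that is required since $\psi$ and $h \circ \phi_\Bu$ are determined by their values on $\vec E$.

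There is no real technical obstacle here; the content of the proposition is that the relations defining the canonical gain group were arranged to be exactly the balancedness condition on $\Bu$ plus the gain-function antisymmetry. The only bookkeeping subtlety is keeping the two conventions $(xy)$ and $(yx)^{-1}$ for oriented edges aligned between $\Gamma(G,\Bu)$ and $\Gamma$, but this is forced by the axiom $\psi(xy) = \psi(yx)^{-1}$ and requires no additional argument.
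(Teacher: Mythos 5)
Your proof is correct and is exactly the standard universal-property-of-presentations argument: the paper itself states this proposition as a citation to Zaslavsky (Theorem 2.1 of that reference) without reproducing a proof, and your argument --- killing both the edge-inversion relations and the circuit relations of $\langle \vec E \mid \Bu\rangle$ via the gain axiom and balancedness, then factoring through the quotient --- is precisely the intended one. Nothing is missing.
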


\Hidden{ 
Given a subset $S\subset E$, the \emph{balanced closure} of $S$, $\bcl(S)$ is defined by
\[
\bcl(S) = S\cup\left\{e\in E: \text{ there is a balanced circuit } C \text{ such that } e\in C\subseteq S\cup\{e\}\right\}.
\]
We will also consider the balanced closure of a set of circuits to be the balanced closure of the set of all edges belonging to those circuits.  
} 

\begin{definition}[Combinatorial circuit generator]
We say, $\Bu \subset \mathcal S$ is a \emph{combinatorial circuit generator} if the linear subclass generated by $\Bu$ is the entire set of circuits $\mathcal S(G)$.
\end{definition}

For convenience in talking about combinatorial circuit generators and linear subclasses of circuits, we define the following operation on circuits.  For circuits $C_1,C_2$ belonging to the same theta graph, we define $C_1\oplus C_2$ to be the third cycle in the theta graph.  Then we may describe a linear subclass as a set of circuits that is closed under the operation $\oplus$.

\begin{proposition}\label{prop:products}
Let $\Gamma_1,\Gamma_2$ be groups, let $G$ be a finite graph and let $\mathcal B \subset \mathcal S(G)$ be a set of cycles.
T.f.a.e.
\begin{enumerate}
\item
$\mathcal B$ is a $\Gamma_1 \times \Gamma_2$ circuit generator.
\item $\mathcal B$ is a $\Gamma_1$ circuit generator and a $\Gamma_2$ circuit generator.
\end{enumerate}
Moreover if $\Gamma_1 \subset \Gamma_2$ is a subgroup and if $\mathcal B$ is a $\Gamma_2$ circuit generator, then $\mathcal B$ is also a $\Gamma_1$ circuit generator.
\begin{proof}
The fact that the circuit generator property is inherited to subgroups directly follows from the definition. This also proves 1 $\Rightarrow$ 2.
We now prove 2 $\Rightarrow$ 1 indirectly.
Suppose $\mathcal B$ is not a $\Gamma_1 \times \Gamma_2$ circuit generator. Then, there exists $C=(x_1,\ldots,x_n) \in \mathcal S(G)$ and $\phi \in \Phi(G,\Gamma_1 \times \Gamma_2)$ s.t. $\phi$ is balanced on $\mathcal B$ but not on $C$, i.e.,
$\phi(x_1x_2)\ldots \phi(x_{n-1}x_n)\phi(x_nx_1) = (g_1,g_2) \in \Gamma_1 \times \Gamma_2$ with $g_k \neq e_{\Gamma_k}$ for some $k \in \{1,2\}$.
Define $\phi_i \in \Phi(G,\Gamma_i)$ s.t. $\phi(e) = (\phi_1(e),\phi_2(e))$ Now, $\phi_i$ is balanced on $\mathcal B$ for $i=1,2$ and $\phi_k$ is not balanced on $C$. Therefore, $\mathcal B$ is not a $\Gamma_k$-circuit generator. This is a contradiction and finishes the proof.
\end{proof}

\end{proposition}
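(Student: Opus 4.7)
The plan is to view the balance condition $\phi(x_1x_2)\cdots\phi(x_nx_1)=e_\Gamma$ as a group equation and exploit its functoriality under group homomorphisms, together with the elementary fact that an element of a product group is the identity iff each of its coordinates is. The proof splits naturally into the ``moreover'' statement about subgroup inheritance and the equivalence $1 \Leftrightarrow 2$; subgroup inheritance will also handle the easier direction of the equivalence for free.

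First I would dispatch the subgroup statement. Suppose $\Gamma_1\subset \Gamma_2$ and $\mathcal B$ is a $\Gamma_2$-circuit generator. For any $\phi\in \Phi(G,\Gamma_1)$ balanced on $\mathcal B$, post-compose with the inclusion $\Gamma_1\hookrightarrow \Gamma_2$ to obtain a $\Gamma_2$-gain function. Since this inclusion maps identity to identity, the new gain function is still balanced on $\mathcal B$, hence balanced on all of $\mathcal S(G)$ by hypothesis. The resulting identity already lives in $\Gamma_1$, which is what was needed. Applying this observation to the two coordinate embeddings $\Gamma_i\hookrightarrow \Gamma_1\times \Gamma_2$ immediately delivers $1\Rightarrow 2$.

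For the converse $2\Rightarrow 1$ I would argue contrapositively. Assume $\mathcal B$ is not a $\Gamma_1\times \Gamma_2$-circuit generator and take a witnessing $\phi\in \Phi(G,\Gamma_1\times \Gamma_2)$, balanced on $\mathcal B$ but not on some circuit $C$. Writing $\phi(e)=(\phi_1(e),\phi_2(e))$, each coordinate map $\phi_i$ is itself a $\Gamma_i$-gain function, since the relation $\phi(xy)=\phi(yx)^{-1}$ decomposes coordinate-wise. The coordinate-wise identity test then forces both $\phi_1$ and $\phi_2$ to be balanced on $\mathcal B$, while at least one of them must be unbalanced on $C$; that one witnesses the failure of condition 2 for the corresponding factor.

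I do not anticipate any substantive obstacle: the entire argument is bookkeeping about how the balance equation passes through a homomorphism or a coordinate projection. The only point worth checking carefully is that each derived map genuinely remains a gain function, namely that it continues to satisfy $\phi(xy)=\phi(yx)^{-1}$, but this is immediate since both subgroup inclusions and coordinate projections are group homomorphisms and therefore preserve inverses.
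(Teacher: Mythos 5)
Your proposal is correct and follows essentially the same route as the paper's proof: subgroup inheritance via the inclusion homomorphism gives the "moreover" claim and $1\Rightarrow 2$ through the coordinate embeddings, and $2\Rightarrow 1$ is the same contrapositive argument decomposing a witnessing gain function coordinate-wise. No gaps.
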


We prove some facts about circuit generators that will be useful to us later on.

\begin{proposition}\label{prop:finitelyGenerated}
Let $\Gamma_0$ be a group, let $G$ be a finite graph and let $\mathcal B \subset \mathcal S(G)$ be a set of cycles. T.f.a.e.:
\begin{enumerate}
\item
$\mathcal B$ is a $\Gamma_0$-circuit generator.
\item
$\mathcal B$ is a $\Gamma$-circuit generator for all finitely generated subgroups of $\Gamma_0$.
\end{enumerate}
\end{proposition}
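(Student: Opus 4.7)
The plan is to prove the two directions separately. The direction $1 \Rightarrow 2$ is immediate from the ``Moreover'' part of Proposition~\ref{prop:products}: if $\mathcal B$ is a $\Gamma_0$-circuit generator, then it is a $\Gamma$-circuit generator for every subgroup $\Gamma$ of $\Gamma_0$, and in particular for every finitely generated subgroup.

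For the nontrivial direction $2 \Rightarrow 1$, I would argue by contrapositive. Suppose $\mathcal B$ is not a $\Gamma_0$-circuit generator, so that there exists some $\phi \in \Phi(G,\Gamma_0)$ that is balanced on $\mathcal B$ but not balanced on some circuit $C \in \mathcal S(G)$. The key observation is that because $G$ is finite, the set of values $\phi(\vec E) \subset \Gamma_0$ is a finite subset of $\Gamma_0$. Let $\Gamma := \langle \phi(\vec E) \rangle \leq \Gamma_0$ be the subgroup it generates; by construction $\Gamma$ is a finitely generated subgroup of $\Gamma_0$.

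Now view $\phi$ as a gain function $\phi' \in \Phi(G,\Gamma)$ with values in $\Gamma$. Since balancedness of $\phi$ on a circuit is a condition only on a product of values of $\phi$ being equal to the identity element, and since the identity and products are the same whether computed in $\Gamma$ or in $\Gamma_0$, the gain function $\phi'$ is balanced on exactly the same set of circuits as $\phi$. In particular $\phi'$ is balanced on $\mathcal B$ but not on $C$, which shows that $\mathcal B$ is not a $\Gamma$-circuit generator for this finitely generated subgroup $\Gamma \leq \Gamma_0$, contradicting 2.

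The argument is essentially a finiteness observation, so there is no real obstacle; the only subtle point worth flagging is that finiteness of $G$ is exactly what is used to reduce to a finitely generated subgroup, and that ``balanced'' is a property intrinsic to the subgroup generated by the image of $\phi$.
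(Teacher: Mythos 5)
Your proposal is correct and follows essentially the same route as the paper: the forward direction via the subgroup-inheritance part of Proposition~\ref{prop:products}, and the converse by passing to the subgroup generated by the finitely many gain values $\phi(\vec E)$, noting that balancedness is unaffected by restricting the codomain. No gaps.
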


\begin{proof}
1 $\Rightarrow$ 2 follows from Proposition~\ref{prop:products}.

2 $\Rightarrow$ 1: Suppose $\mathcal B$ is not a $\Gamma_0$-circuit generator. Then there exists $\phi \in \Phi(G,\Gamma_0)$ that is balanced on $\mathcal B$ but not on some $C \in \mathcal S(G)$. Let $\Gamma$ be the group generated by all $\phi(e)$ for all $e \in \vec E$. Then, $\Gamma$ is a finitely generated subgroup of $\Gamma_0$ and $\phi \in \Phi(G,\Gamma)$ is balanced on $\mathcal B$ but not on $C$. Therefore, $\mathcal B$ is not a $\Gamma$-circuit generator. This finishes the proof.
\end{proof}

\subsection{Cycle bases and fields}

If $\F$ is a field with additive group $\Gamma$, then we write $\Phi(G,\F):=\Phi(G,\Gamma)$ being a $\F$-vector space.

The $\F$-cycle space is defined by
\[
\Cy(G,\F):= \left\{ \phi \in \Phi(G,\F): \sum_{y:(xy) \in \vec E} \phi(xy)=0 \mbox{ for all } x \in V \right\}.
\]

Every circuit can be identified with a cycle $\phi \in \Cy(G,\F)$ s.t. $\phi(\vec E) \subset \{0,\pm1\}$ and such that $\phi^{-1}(\{1\})$ is a simple connected cycle. Therefore, we have $\mathcal S \hookrightarrow  \Cy(G,\F)$ allowing us to add circuits and to multiply them with scalars.
For a cycle $C \in \mathcal C(G,\F)$ and a gain function $\phi \in \Phi(G,\F)$ define
\[
\phi(C) := \sum_{e \in \vec E} \phi(e)C(e).
\]

\begin{definition}[$\F$-cycle basis]
A subset $\mathcal B \subset \Cy(G,\F)$ is called an \emph{$\F$-cycle basis} of $G$ if $\mathcal B$ is a vector space basis of  $\Cy(G,\F)$.
In the literature, a $\Q$-cycle basis is also called a \emph{directed} cycle basis and an $\F_2$-cycle basis is also called an \emph{undirected} cycle basis.
\end{definition}

According to \cite{liebchen2007classes}, we define the determinant of a set of cycles.
Let $r$ be the cyclomatic number of $G$ and let $\mathcal B \subset \mathcal S(G)$ be of size $r$.
Corresponding to \cite[Definition~22]{liebchen2007classes}, consider the matrix $M(\mathcal B,\F)$ over the field $\F$ with the incidence vectors of $\mathcal B$ as columns.
Let $M(\mathcal B,\F,T)$be the $r \times r$ submatrix that arises when deleting the arcs of the spanning $T$ tree of $G$.
Remark that $M(\mathcal B,\F)$ consists only of the entries $0$ and $\pm 1$.
Now write
$$
\det \mathcal B := |\det M(\mathcal B,\Q,T)|.
$$
It is shown in \cite{liebchen2007classes} that $\det \mathcal B$ does not depend on the choice of the spanning tree $T$.
The following theorem is a simple generalization of the characterization of directed and undirected cycle basis via determinants (see \cite{liebchen2007classes}).
\begin{theorem}\label{thmCycleDet}
A set $\mathcal B \subset \mathcal C(G,\F)$ is an $\F$-basis if and only if $\det \mathcal B \not\equiv 0 \mod \chi(\F)$ where $\chi(\F)$ denotes the characteristic of the field $\F$.
\end{theorem}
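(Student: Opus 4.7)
The plan is to reduce the statement to the classical fact that the fundamental cycles with respect to a spanning tree form a basis of the cycle space, and then transport the question to $\F^r$ via the restriction-to-chords map. Fix a spanning tree $T$ of $G$ and enumerate the non-tree (chord) edges as $e_1,\ldots,e_r \in \vec E$, with fixed orientations. For each chord $e_i$ let $C_i \in \mathcal{C}(G,\F)$ be the fundamental circuit consisting of $e_i$ together with the unique path in $T$ joining its endpoints, oriented so that $C_i(e_i)=1$ and $C_i(e_j)=0$ for $j\neq i$.

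The second step is to check that for any $\phi \in \mathcal{C}(G,\F)$ the difference $\phi - \sum_{i=1}^r \phi(e_i) C_i$ is a cycle whose support is contained in $T$. Since a tree carries no nonzero cycle (this is purely combinatorial and so holds over any field $\F$), the difference must vanish. In particular $\{C_1,\ldots,C_r\}$ is an $\F$-basis of $\mathcal{C}(G,\F)$, and the restriction map
\[
\pi \colon \mathcal{C}(G,\F) \to \F^r, \qquad \phi \mapsto (\phi(e_1),\ldots,\phi(e_r))
\]
is an $\F$-linear isomorphism. By the definition of $M(\mathcal{B},\F,T)$, its columns are exactly the images $\pi(C)$ for $C \in \mathcal{B}$, so $\mathcal{B}$ is an $\F$-basis of $\mathcal{C}(G,\F)$ if and only if $\pi(\mathcal{B})$ is an $\F$-basis of $\F^r$, i.e.\ if and only if $\det M(\mathcal{B},\F,T)\neq 0$ in $\F$.

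Finally, because the entries of $M(\mathcal{B},\F,T)$ lie in $\{0,\pm 1\}$, the Leibniz expansion of its determinant is a sum of $\pm 1$ terms whose value in $\F$ is obtained by reducing the corresponding integer $\det M(\mathcal{B},\Q,T)$ modulo $\chi(\F)$. Since by definition $\det \mathcal{B} = \lvert\det M(\mathcal{B},\Q,T)\rvert$, the conditions $\det M(\mathcal{B},\F,T)\neq 0$ in $\F$ and $\det \mathcal{B}\not\equiv 0 \pmod{\chi(\F)}$ are equivalent, which completes the proof. The independence of $\det \mathcal{B}$ from the choice of spanning tree is inherited from \cite{liebchen2007classes} and does not need to be reproved.

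There is no real obstacle here; the only point requiring any thought is verifying that the fundamental-cycle argument works over an arbitrary field (and not only over $\Q$ or $\F_2$ as treated in \cite{liebchen2007classes}), but this is immediate since the argument rests solely on the fact that trees are acyclic, a statement wholly independent of the coefficient ring.
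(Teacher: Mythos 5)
Your proof is correct and follows essentially the same route as the paper's: reduce basis-hood of $\mathcal B$ to invertibility of $M(\mathcal B,\F,T)$ over $\F$, then reduce the integer determinant modulo $\chi(\F)$. The only difference is that you supply the details (fundamental circuits and the restriction-to-chords isomorphism) for the step the paper dismisses as ``easy to see,'' which is a welcome addition but not a different argument.
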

\begin{proof}
It is easy to see that $\mathcal B$ is a $\F$-vector space basis of $\mathcal C(G,\F)$ if and only if $M(\mathcal B,\F,T)$ is invertible as a matrix over  $\F$ for a given spanning tree $T$ of $G$. This holds true if and only if $\det M(\mathcal B,\F,T) \neq 0$ which is equivalent to
\[
\det M(\mathcal B,\Q,T) \not\equiv 0 \mod \chi(\F).
\]
This directly implies the theorem.
\end{proof}

Besides the $\F$ cycle basis, there are also stricter notions of cycle basis. The following definitions can be found in \cite{liebchen2007classes}.

\begin{definition}[Integral cycle basis] A set $\mathcal B = \{C_1, . . . , C_r\} \subset \mathcal C(G,\Q)$ of oriented circuits of a graph $G$ is an \emph{integral cycle basis} of $G$ if every oriented circuit C of $G$ can be written as an integer linear combination of circuits in $\mathcal B$, i.e., there exist $\lambda_i \in \Z$ s.t.
\[
C= \lambda_i C_i + \ldots + \lambda_r C_r.
\] 
\end{definition}

\begin{proposition}[\cite{liebchen2003finding}]
Let  $\mathcal B = \{C_1, . . . , C_r\} \subset \mathcal C(G,\Q)$ be a set of oriented circuits of a graph $G$. T.f.a.e.:
\begin{enumerate}
\item
$\mathcal B$ is an integral basis.
\item $\det \mathcal B$ = 1.
\end{enumerate}
\end{proposition}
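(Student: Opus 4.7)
The plan is to reduce the equivalence to a fact about unimodularity of change-of-basis matrices in the cycle space. First, I would fix a spanning tree $T$ of $G$ with non-tree edges $e_1,\ldots,e_r$ and consider the \emph{fundamental cycles} $F_1,\ldots,F_r$, where $F_j$ is the unique circuit obtained by adding $e_j$ to $T$, oriented so that $F_j(e_j)=1$. The standard fact I would invoke is that every oriented circuit $C \in \Cy(G,\Q)$ has the unique expansion
\[
C = \sum_{j=1}^{r} C(e_j)\,F_j,
\]
with coefficients $C(e_j) \in \{0,\pm 1\}$. Hence $\mathcal F := \{F_1,\ldots,F_r\}$ is itself an integral cycle basis, and by construction the submatrix $M(\mathcal F,\Q,T)$ is (up to a diagonal sign change) the identity, so $\det \mathcal F = 1$.

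Next, I would observe that the matrix $A := M(\mathcal B,\Q,T)$ is precisely the change-of-basis matrix from $\mathcal F$ to $\mathcal B$: its $(j,i)$-entry is $C_i(e_j)$, the coefficient of $F_j$ in the expansion of $C_i$ above. So $C_i = \sum_j A_{ji}F_j$ for every $i$, and $\mathcal B$ is a $\Q$-basis of $\Cy(G,\Q)$ if and only if $A \in \mathrm{GL}_r(\Q)$, which is just Theorem~\ref{thmCycleDet} over $\Q$.

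For the equivalence $(1)\Leftrightarrow(2)$, I would argue both directions via the integer structure of $A$. If $\det \mathcal B = 1$, then $A \in \mathrm{GL}_r(\Z)$ and $A^{-1}$ has integer entries; given any oriented circuit $C$ with expansion $C = \sum_j C(e_j) F_j$, applying $A^{-1}$ to the coefficient vector yields $C = \sum_i \lambda_i C_i$ with $(\lambda_i) = A^{-1}(C(e_j))_j \in \Z^r$, so $\mathcal B$ is an integral basis. Conversely, if $\mathcal B$ is an integral basis, then each $F_j$ — being itself a circuit — admits an expansion $F_j = \sum_i B_{ij} C_i$ with $B_{ij} \in \Z$. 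Substituting back into $C_i = \sum_j A_{ji} F_j$ gives $BA = I$ over $\Z$, so $\det A \cdot \det B = 1$ in $\Z$, forcing $|\det A| = 1$, i.e.\ $\det \mathcal B = 1$.

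The only delicate point — and the main thing to get right — is the book-keeping of orientations: one must ensure that the sign conventions used to identify a circuit $C$ with its incidence vector in $\Cy(G,\Q)$ are consistent between $\mathcal B$ and $\mathcal F$, so that $M(\mathcal F,\Q,T)$ genuinely lies in $\mathrm{GL}_r(\Z)$ with determinant $\pm 1$. Since $\det \mathcal B$ is defined as an absolute value, this sign ambiguity does not affect the final statement, and the independence of $\det \mathcal B$ from the choice of $T$ (cited from \cite{liebchen2007classes}) means we may freely choose the spanning tree that is most convenient for this argument.
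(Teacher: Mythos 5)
The paper does not actually prove this proposition; it is quoted from \cite{liebchen2003finding} as a known result, so there is nothing internal to compare against. Your argument is a correct and complete proof, and it is the standard one: expand everything in the fundamental cycle basis $\mathcal F$ of the chosen spanning tree, observe that $M(\mathcal B,\Q,T)$ is exactly the (integer) change-of-basis matrix from $\mathcal F$ to $\mathcal B$, and then read off the equivalence from unimodularity. Both directions check out: the forward direction uses that $A^{-1}$ is integral when $|\det A|=1$ together with $C=\sum_j C(e_j)F_j$ and $C(e_j)\in\{0,\pm1\}$, and the converse produces an integer matrix $B$ with $\mathbf F=\mathbf F AB$, whence $\det A\cdot\det B=\pm1$ forces $|\det A|=1$. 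The only sentence I would tighten is ``substituting back \dots gives $BA=I$'': to cancel you need linear independence of the family you are cancelling against, so it is cleanest to substitute so as to obtain $\mathbf F=\mathbf F AB$ and use that $\mathcal F$ is already known to be a basis (for square matrices $AB=I$ then gives $BA=I$ as well). Your closing remark on orientations is also right, and harmless since $\det\mathcal B$ is defined as an absolute value.
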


\begin{definition}[Totally unimodular cycle basis] A $\Q$-cycle basis $\mathcal B = \{C_1, . . . , C_r\} \subset \mathcal C(G,\Q)$ of a graph $G$ is a \emph{totally unimodular cycle basis} of $G$ if its cycle matrix $M(\mathcal B, \Q)$ is totally unimodular, i.e., each sub determinant is either $0$ or $\pm 1$.
\end{definition}

\begin{definition}[Weakly fundamental cycle basis]
A set $\mathcal B = \{C_1,\ldots,C_r\}$ of circuits  of a graph $G$ is a \emph{weakly fundamental cycle basis} of $G$ if there exists some permutation $\sigma$ such that for all $i=2,\ldots,r$,
\[
C_{\sigma(i)} \setminus (C_{\sigma(1)} \cup \ldots \cup C_{\sigma(i-1)}) \neq \emptyset.
\]
\end{definition}

\begin{proposition}[\cite{liebchen2007classes}]
Let  $\mathcal B = \{C_1, . . . , C_r\} \subset \mathcal C(G,\Q)$ be a set of oriented circuits of a graph $G$. T.f.a.e.:
\begin{enumerate}
\item
$\mathcal B$ is a weakly fundamental cycle basis.
\item There exists a spanning tree $T$ and a permutation of columns and rows such that $M(\mathcal B,\Q,T)$ is lower triangular.
\end{enumerate}
\end{proposition}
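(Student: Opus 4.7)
The plan is to prove the two implications separately, keyed to the following correspondence: an edge $e_i$ witnessing that $C_i$ is new with respect to $C_{\sigma(1)},\ldots,C_{\sigma(i-1)}$ should become a chord of the spanning tree $T$, with the row of $M(\mathcal B,\Q,T)$ indexed by $e_i$ having its rightmost nonzero entry in the column indexed by $C_i$.

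For $(1)\Rightarrow(2)$, after relabeling so that the permutation $\sigma$ is the identity, I would choose for each $i$ an edge $e_i\in C_i\setminus(C_1\cup\cdots\cup C_{i-1})$. These edges are pairwise distinct, since for $j<i$ one has $e_j\in C_j\subseteq C_1\cup\cdots\cup C_{i-1}$, while $e_i$ is required to avoid this union. I would then set $T:=E\setminus\{e_1,\ldots,e_r\}$, so that $|T|=|E|-r=|V|-1$, and show $T$ is acyclic as follows: any hypothetical cycle $C\in\mathcal C(G,\Q)$ supported on $T$ can be expanded as $C=\sum_i\lambda_i C_i$ in the basis $\mathcal B$; inspecting the coefficient of $e_r$ (which is $\pm 1$ in $C_r$ and $0$ in every $C_j$ with $j<r$) forces $\lambda_r=0$, and iterating this step downward on $i$ forces every $\lambda_i=0$, contradicting $C\neq 0$. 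Hence $T$ is a spanning tree whose chord set is exactly $\{e_1,\ldots,e_r\}$. Ordering both the rows (chords) and columns (cycles) of $M(\mathcal B,\Q,T)$ in reverse order $r,r-1,\ldots,1$, the $(i,j)$-entry becomes the coefficient of $e_{r-i+1}$ in $C_{r-j+1}$, which vanishes whenever $j>i$ by construction, so the matrix is lower triangular.

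For $(2)\Rightarrow(1)$, after applying the given row and column permutations, I would rename the chords in row order $f_1,\ldots,f_r$ and the cycles in column order $D_1,\ldots,D_r$, so that $M_{ij}=0$ for $j>i$. Since $\mathcal B$ is a cycle basis, the matrix is invertible, so $M_{ii}\neq 0$; equivalently, $f_i\in D_i$ and $f_i\notin D_j$ for every $j>i$. Taking $\sigma$ to be the reversal permutation $\sigma(k)=r-k+1$, the edge $f_{\sigma(k)}$ lies in $D_{\sigma(k)}$ but in none of $D_{\sigma(1)},\ldots,D_{\sigma(k-1)}=D_r,\ldots,D_{r-k+2}$, which is exactly the weakly fundamental condition.

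The main obstacle will be the forward direction: verifying that the complement of the chosen private edges genuinely is a spanning tree. This step uses the basis property through the coefficient-chasing argument above. As a byproduct, the same argument shows that any set of $r$ circuits satisfying the order condition is automatically $\Q$-linearly independent and hence a basis, so the word ``basis'' in ``weakly fundamental cycle basis'' is redundant once the combinatorial condition on the circuits is imposed; modulo this key point, both implications reduce to bookkeeping with row and column reorderings.
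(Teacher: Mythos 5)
The paper does not prove this proposition; it is quoted from \cite{liebchen2007classes} as a known fact, so there is no in-paper argument to compare against. Your proof is correct and is the standard one. In $(1)\Rightarrow(2)$ the choice of private edges $e_i\in C_i\setminus(C_1\cup\cdots\cup C_{i-1})$, the verification that $T=E\setminus\{e_1,\ldots,e_r\}$ is a spanning tree via the coefficient-chasing argument, and the reversal of row and column order to pass from upper to lower triangular form are all sound; your side remark that the ordering condition already forces linear independence (hence, with $|\mathcal B|=r$, the basis property) is also correct and is needed to legitimize expanding the hypothetical cycle supported on $T$ in the basis $\mathcal B$. The one point you should make explicit concerns $(2)\Rightarrow(1)$: you write ``since $\mathcal B$ is a cycle basis, the matrix is invertible,'' but statement (2) as literally phrased only posits a set of $r$ circuits whose matrix is permutable to lower triangular form, and such a matrix can be singular (e.g.\ in $K_4$ with the star spanning tree, the two triangles and the quadrilateral avoiding a fixed chord give a lower-triangular matrix with a zero diagonal entry, yet they are dependent). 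So either the diagonal entries must be required to be nonzero in (2), or one must read the proposition, as its source does, with ``$\mathcal B$ is a cycle basis'' as a standing hypothesis; under either reading your argument $M_{ii}\neq 0\Rightarrow f_i\in D_i,\ f_i\notin D_j\ (j>i)$, followed by the reversal permutation, goes through. This is an imprecision in the statement rather than a flaw in your reasoning, but a complete write-up should say which convention is in force.
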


\begin{definition}[Strictly fundamental cycle basis]
A set $\mathcal B$ of circuits of a graph $G$ is a \emph{strictly fundamental cycle basis} of $G$, if there exists some spanning tree $T \subseteq E$ such that $\mathcal B = \{C_e : e \in E \setminus T \}$, where $C_e$ denotes the unique circuit in $T \cup \{e\}$.
\end{definition}

\begin{proposition}[\cite{liebchen2007classes}]
Let  $\mathcal B = \{C_1, . . . , C_r\} \subset \mathcal C(G,\Q)$ be a set of oriented circuits of a graph $G$. T.f.a.e.:
\begin{enumerate}
\item
$\mathcal B$ is a strictly fundamental cycle basis.
\item There exists a spanning tree $T$ and a permutation of columns and rows such that $M(\mathcal B,\Q,T)$ is diagonal.
\end{enumerate}
\end{proposition}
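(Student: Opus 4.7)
The plan is to prove each direction by unpacking what it means for a column (respectively row) of $M(\mathcal B, \Q, T)$ to have very few nonzero entries, and then invoke the elementary fact that if a subgraph of $G$ consists of the spanning tree $T$ together with a single extra edge $e$, then it contains a unique circuit (the ``fundamental circuit'' $C_e$).

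For the direction $1 \Rightarrow 2$, I would suppose $\mathcal B = \{C_e : e \in E \setminus T\}$ for some spanning tree $T$. Fix an arbitrary orientation of each $C_e$. By construction, the edge set of $C_e$ lies in $T \cup \{e\}$, and $e$ appears in $C_e$ exactly once. Hence when we form $M(\mathcal B, \Q, T)$, which keeps only the rows indexed by $E \setminus T$, the column corresponding to $C_e$ has a $\pm 1$ in row $e$ and zeros in every other row. After indexing the columns by $\{C_e\}_{e \in E \setminus T}$ in the same order as the rows $\{e\}_{e \in E \setminus T}$, the matrix is diagonal (with $\pm 1$ on the diagonal).

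For the direction $2 \Rightarrow 1$, suppose after some permutation of rows and columns $M(\mathcal B, \Q, T)$ is diagonal with nonzero diagonal entries. This means we may relabel $\mathcal B = \{C_1, \ldots, C_r\}$ and order the non-tree edges $e_1, \ldots, e_r$ so that, restricted to $E \setminus T$, the circuit $C_i$ uses only the edge $e_i$ (and does so with coefficient $\pm 1$). All other edges of $C_i$ therefore belong to $T$, so as an edge set, $C_i \subseteq T \cup \{e_i\}$. Since $T$ is a spanning tree, $T \cup \{e_i\}$ contains the unique circuit $C_{e_i}$, and hence $C_i = \pm C_{e_i}$ as oriented cycles. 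The $e_i$ are pairwise distinct (the rows of a diagonal matrix are), so this exhibits $\mathcal B = \{C_e : e \in E \setminus T\}$ up to sign, verifying that $\mathcal B$ is a strictly fundamental cycle basis.

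Neither direction really has a hard step: the whole argument is bookkeeping around the observation ``the only circuit contained in $T \cup \{e\}$ is $C_e$.'' The single point that requires a moment of care is showing, in $2 \Rightarrow 1$, that the $r$ non-tree edges witnessing the diagonal entries are genuinely distinct and exhaust $E \setminus T$; this is immediate from the fact that a diagonal $r \times r$ matrix with nonzero diagonal has exactly one nonzero entry in each row and column, together with $|E \setminus T| = r$.
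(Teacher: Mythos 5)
The paper cites this proposition from the literature (\cite{liebchen2007classes}) and gives no proof of its own, so there is nothing to compare against; your argument is a correct and complete direct verification. Both directions are sound. The only remark worth adding: in the direction $2 \Rightarrow 1$ you assume the diagonal entries are nonzero, but this is automatic, since no circuit can be contained in the spanning tree $T$, hence every column of $M(\mathcal B,\Q,T)$ must have at least one nonzero entry (and that entry is $\pm 1$ because the cycle matrix has entries in $\{0,\pm 1\}$).
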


\Hidden{
\subsection{Combinatorial circuit generators}
We say, two circuits $C_1,C_2$ share a path if there exists paths $p_1,p_2,p_3$ s.t. $C_i$ can be written as $C_i=(p_i,p_3)$ for $i=1,2$.
For two circuits $C_1,C_2$, we define $\oplus: \mathcal S \times \mathcal S \to \mathcal S$ via
\[
C_1 \oplus C_2 := \begin{cases}
C_1 - C_2 &: C_1 \mbox{ and } C_2 \mbox{ share a path}\\
0 &:\mbox{ otherwise}.
\end{cases}
\]

We say a set $\mathcal B \subset \mathcal S$ is closed under $\oplus$ if for all $C_1,C_2 \in \mathcal B$ also $C_1\oplus C_2 \in \mathcal B$.
Remark that the intersection of sets closed under $\oplus$ are also closed. Therefore it is reasonable to define the closure of a set $\mathcal B$ as the intersection of all supersets of $\mathcal B$ being closed under $\oplus$.

\begin{definition}[Combinatorial circuit generator]
We say, $\mathcal B \subset \mathcal S$ is a \emph{combinatorial circuit generator} if the closure of  $\mathcal B$ under $\oplus$ is the circuit space $\mathcal S(G)$.
\end{definition}
}

\subsection{Circuit preserving coverings}
Let $G=(V,E)$ be a connected graph.  If $\widetilde G = (\widetilde V, \widetilde E)$ is a graph, and $\Psi:\widetilde G\rightarrow G$ is a surjective graph homomorphism such that $\Psi$ is locally bijective (i.e., $\Psi$ is bijective when restricted to the neighborhood of a single vertex), then the pair $(\widetilde G,\Psi)$ is called a \emph{covering} of $G$. 

Since $\Psi$ is locally bijective, than it can be seen that $|\Psi^{-1}(x)|$ is constant for all vertices $x\in V$.  If this constant value is $m$, we say that $(\widetilde G,\Psi)$ is a covering with \emph{$m$ sheets}, or is an \emph{$m$-sheeted} covering.  Here, $m$ can be infinite. 

We call a covering $(\widetilde G,\Psi)$ \emph{trivial} if $\Psi$ restricted to any connected component of $\widetilde G$ is a graph isomorphism.  We say it is \emph{non-trivial} otherwise, i.e., if there is at least one connected component of $\widetilde G$ on which $\Psi$ is not one-to-one.  

Let $\mathcal B \subset \mathcal S(G)$ be a set of circuits. 
We say a covering $(\widetilde G, \Psi)$ is a $\mathcal B$ \emph{preserving covering} of $G$ if for all circuits $C=(x_1,\ldots,x_n) \in \mathcal B$ and all $\widetilde x_1 \in \widetilde V$ with $\Psi(\widetilde x_1) = x_1$, there exist a circuit $\widetilde C = (\widetilde x_1,\ldots, \widetilde x_n) \in \widetilde V$ s.t. $\Psi(\widetilde x_k) = x_k$ for all $k$. Note in particular that every circuit in the pre-image of $C$ has length equal to the length of $C$.

\subsection{Path homology of graphs}
In this section, we will give definitions for a homology theory on graphs that has been developed in recent years, called \emph{path homology}.  See \cite{Grigoryan1,Grigoryan2,Grigoryan3}.  This homology theory is most naturally described for directed graph, and the homology of an undirected graph is obtained by orienting each edge of an undirected graph in both possible directions.  

For a directed graph $G=(V,E)$ (without self-loops) we start by defining an \emph{elementary $m$-path} on $V$ to be a sequence $i_0,...,i_m$ of $m+1$ vertices of $V$.  For a field $\F$ we define the $\F$-linear space $\Lambda_m$ to consist of all formal linear combinations  of elementary $m$-paths with coefficients from $\F$.  We identify an elementary $m$-path as an element of $\Lambda_m$ denoted by $e_{i_0...i_m}$, and $\{e_{i_0...i_m}:i_0,...,i_m\in V\}$ is a basis for $\Lambda_m$.  Elements of $\Lambda_m$ are call \emph{$m$-paths}, and a typical $m$-path $p$ can be written as
\[
p = \sum_{i_0,...,i_m\in V}a_{i_0...i_m}e_{i_0...i_m},~~~a_{i_0...i_m}\in\F.
\]
Note that $\Lambda_0$ is the set of all formal linear combinations of vertices in $V$. 

We define the \emph{boundary operator} $\de:\Lambda_m\rightarrow\Lambda_{m-1}$ to be the $\F$-linear map that acts of elementary $m$-paths by
\[
\de e_{i_0...i_m} = \sum_{k=0}^m (-1)^ke_{i_0...\hat i_k...i_m},
\]
where $\hat i_k$ denotes the omission of index $i_k$.  

For convenience, we define $\Lambda_{-1} = 0$ and $\de:\Lambda_0\rightarrow\Lambda_{-1}$ to be the zero map.

It can be checked that $\de^2 =0$, so that the $\Lambda_m$ give a chain complex (see \cite{Grigoryan3}).  When it is important to make the distinction, we will use $\de_m$ to denote the boundary map on $\Lambda_m$, $\de_m:\Lambda_m\rightarrow\Lambda_{m-1}$.

An elementary $m$-path $i_0...i_m$ is called \emph{regular} if $i_k\neq i_{k+1}$ for all $k$, and is called \emph{irregular} otherwise.  Let $I_m$ be the subspace of $\Lambda_m$ spanned by all irregular $m$-paths, and define 
\[
\mathcal R_m = \Lambda_m/I_m.
\]
The space $\mathcal R_m$ is isomorphic to the span of all regular $m$-paths, and the boundary map $\de$ is naturally defined on $\mathcal R_m$, treating any irregular path resulting from applying $\de$ as 0.

In the graph $G=(V,E)$, call an elementary $m$-path $i_0...i_m$ \emph{allowed} if $i_ki_{k+1} \in E$ for all $k$.  Define $\mathcal A_m$ to be the subspace of $\mathcal R_m$ given by
\[
\mathcal A_m = \SPAN\{e_{i_0...i_m}:i_0...i_m \text{ is allowed}\}.
\]
The boundary map $\de$ on $\mathcal A_m$ is simply the restriction of the boundary map on $\mathcal R_n$, however, it can be the case that the boundary of an allowed $m$-path is not an allowed $(m-1)$-path.  So we make one further restriction, and call an elementary $m$-path $p$ \emph{$\de$-invariant} if $\de p$ is allowed.  We define 
\[
\Omega_m = \{p\in \mathcal A_m : \de p \in \mathcal A_{m-1}\}.
\]
Then it can be seen that $\de\Omega_m\subseteq \Omega_{m-1}$.  The $\Omega_m$ with the boundary map $\de$ give us our chain complex of $\de$-invariant allowed paths from which we will define our homology:
\[
\cdots\Omega_{m}\overset{\de}{\rightarrow}\Omega_{m-1}\overset{\de}{\rightarrow}\cdots\rightarrow\Omega_1\rightarrow\Omega_0\rightarrow0.
\]
Observe that $\Omega_0$ is the space of all formal linear combinations of vertices of $G$, and $\Omega_1$ is space of all formal linear combinations of edges of $G$.  We can now define the homology groups of this chain complex.

\begin{definition}[path homology]
The \emph{path homology groups} of the graph $G$ over the field $\F$ are
\[
H_n(G,\F) = Ker~\de|_{\Omega_n}/Im~\de|_{\Omega_{n+1}}.
\]
\end{definition}

A standard fact is that $dim~H_0(G,\F)$ counts the number of connected components of $G$ (\cite{Grigoryan1}).

As a standard example of some of the interesting behavior of this homology, consider the directed 4-cycle pictured below.
\begin{center}
\begin{tikzpicture}
\draw[shorten >=4pt,shorten <=3pt, ->](0,0)node{$x$}--(0,1)node{$y$};
\draw[shorten >=4pt,shorten <=3pt, ->](0,1)node{$y$}--(1,1)node{$z$};
\draw[shorten >=4pt,shorten <=3pt, ->](0,0)node{$x$}--(1,0)node{$w$}; 
\draw[shorten >=4pt,shorten <=3pt, ->](1,0)node{$w$}--(1,1)node{$z$};
\end{tikzpicture}
\end{center}
Note that the 2-paths $e_{xyz}$ and $e_{xwz}$ are allowed by not $\de$-invariant, since the edge $xz$ is missing from the graph.  However, if we consider the linear combination $e_{xyz}-e_{xwz}$, then note
\[
\de(e_{xyz} - e_{xwz}) = e_{yz}-e_{xz}+e_{xy} - (e_{wz}-e_{xz}+e_{xw}) = e_{yz}+e_{xy}-e_{wz}+e_{xw} \in \Omega_1.
\]
Thus $e_{xyz}-e_{xwz}\in\Omega_2$, and it turns out that $\Omega_2 = span\{e_{xyz}-e_{xwz}\}$.  It can be seen that $Ker~\de_1$ is spanned by $e_{xy}+e_{yz}-e_{zw}-e_{xz}$, which is precisely $Im~\de_2$, so $H_1(G,\F)=0$.  For circuits of length more than 4, then $dim~H_1(G,\F) = 1$.  See \cite{Grigoryan3} for details.  

\subsection{Graph homotopy and fundamental group}

There is a separate notion on homotopy for graphs \cite{Grigoryan_homotopy} under which the path homology is invariant.  Via this homotopy, one can define a fundamental group of a graph.  Namely, for a graph $G$, we specify a base vertex $v_*$, and define a based loop as a map $\phi:I_n\rightarrow G$ where $I_n$ is a path on vertices $0,...,n$, and $\phi$ satisfies $\phi(0) = \phi(n) = v_*$.  Here, the map $\phi$ is a \emph{graph map}, meaning that for $x\sim y$, either $\phi(x) \sim \phi(y)$ or $\phi(x)=\phi(y)$.  Two loops are considered equivalent if there is a C-homotopy between them, where homotopy is defined in a way anologous to homotopy of algebraic topology.  The exact definition of this is not needed here, but details kind be found in \cite{Grigoryan_homotopy}.  We will make use of the following result from \cite{Grigoryan_homotopy} to determine when two loops are equivalent.  For our purposes, we can take this as the definition of C-homotopy.  To state this, we need the following terminology: given a loop $\phi:I_n\rightarrow G$, the \emph{word} of $\phi$, denoted $\theta_\phi$ is the sequence $v_0,...,v_n$ with $v_i = \phi(i)$ for $i=0,...,n$.  

\begin{theorem}[Theorem 4.13 of \cite{Grigoryan_homotopy}]\label{thm:homotopy}
Two based loops $\phi:I_n\rightarrow G$ and $\psi:I_m\rightarrow G$ are C-homotopic if and only if the word $\theta_\psi$ can be obtained from $\theta_\phi$ by a finite sequence of the following transformations and their inverses:
\begin{enumerate}
\item $...abc...\mapsto ac$ where $(a,b,c)$ are vertices forming a triangle in $G$ (and the ... denotes the unchanged part of the word);
\item $...abc...\mapsto ...adc...$ where $(a,b,c,d)$ forms a square in $G$;
\item $...abcd...\mapsto ...ad...$ where $(a,b,c,d)$ is a square in $G$;
\item $...aba...\mapsto ...a...$ if $a\sim b$;
\item $...aa...\mapsto ...a...$.
\end{enumerate}
\end{theorem}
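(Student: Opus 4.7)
The plan is to prove both implications directly from the formal definition of C-homotopy, which is the equivalence relation on based graph maps $I_n \to G$ generated by elementary one-step homotopies, i.e., graph maps out of an appropriate product or cylinder whose two boundary rows are the two loops being compared. With this setup, proving the theorem reduces to checking that the five listed word transformations generate exactly the same equivalence as the one-step homotopies do.

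For the direction $(\Leftarrow)$, I would realize each of the five word transformations as an explicit elementary C-homotopy. Transformation (1) is implemented by a graph map whose top row spells $\ldots abc \ldots$ and whose bottom row spells $\ldots a c \ldots$ (after a stutter), the map being well-defined precisely because $abc$ is a triangle of $G$; transformation (2) uses the square $abcd$ as a commuting diagram; transformation (3) folds the square by collapsing $b$ and $c$; transformations (4) and (5) are degenerate reparameterizations available in any graph. Composing these elementary homotopies produces a C-homotopy from $\phi$ to $\psi$.

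For the direction $(\Rightarrow)$, suppose $\phi$ and $\psi$ are C-homotopic. Decompose the homotopy into a finite sequence of elementary one-step homotopies; it suffices to analyze one such step, which is a graph map from a cylinder $I_m \times I_1$ into $G$. On each unit square $\{i,i+1\}\times\{0,1\}$ of the cylinder, the four corner images are pairwise either equal or adjacent in $G$ because the map is a graph map, so they form a single vertex, an edge, a triangle, or a proper square. Reading off the two rows word by word, the local change from the bottom row to the top row across that unit square is exactly one of the five listed transformations (or its inverse). Concatenating across the width of the cylinder recovers $\theta_\psi$ from $\theta_\phi$.

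The main obstacle is the bookkeeping in the second direction: several of the five moves change the length of the word, whereas a generic one-step homotopy moves all columns simultaneously on a fixed-width grid. To decouple the columns I would proceed by induction on the width of the cylinder, inserting stuttering moves from (5) to pad intermediate words so that each local change can be applied in isolation and so that the intermediate words remain legitimate based loops throughout the sequence. Once this alignment mechanism is in place, the theorem reduces to an unpacking of the definition of a graph map on a product grid together with the classification of its values on a single unit square.
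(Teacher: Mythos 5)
First, a point of orientation: the paper you are working from does not prove this statement at all. It is imported verbatim as Theorem 4.13 of \cite{Grigoryan_homotopy}, and the authors explicitly say they will treat it as the working definition of C-homotopy. So there is no in-paper proof to compare against; the relevant comparison is with the argument in the cited reference, whose overall architecture (reduce to one-step homotopies on a cylinder $I_m\Box I_1$, then translate each step into word moves) your proposal correctly reproduces.

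Within that architecture, however, there is a genuine gap in your forward direction, and it sits exactly where the real content of the theorem lies. You claim that ``the local change from the bottom row to the top row across that unit square is exactly one of the five listed transformations.'' It is not. The five moves each rewrite a \emph{single} word by altering a short consecutive block (deleting a letter via a triangle, replacing one interior letter via a square, etc.), whereas a one-step homotopy replaces \emph{every} letter $x_i$ of the bottom word by $y_i$ simultaneously; the ``change across one unit square'' swaps the pair $(x_i,x_{i+1})$ for $(y_i,y_{i+1})$, which is not any of moves (1)--(5). Moreover, your proposed fix --- padding with stutters so columns can be changed one at a time --- does not by itself produce legitimate intermediate walks, because the box product has no diagonal edges: nothing forces $y_i$ to be equal or adjacent to $x_{i+1}$, so the naive intermediate word $y_0\cdots y_i x_{i+1}\cdots x_m$ may fail to be a walk. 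The standard repair is a staircase interleaving: pass through the words $y_0\cdots y_{i}\,x_{i}\,x_{i+1}\cdots x_m$, which are always walks because the vertical edge of the grid supplies the adjacency $y_i\simeq x_i$, and then check by case analysis on the closed $4$-walk $(y_i,x_i,x_{i+1},y_{i+1})$ (which degenerates to a vertex, an edge, a triangle, or a genuine square) that the passage from $y_i\,x_i\,x_{i+1}$ to $y_i\,y_{i+1}\,x_{i+1}$ is achieved by moves (1)--(5) and their inverses in every case. Without this serialization lemma, the $(\Rightarrow)$ direction is asserted rather than proved; with it, your outline becomes essentially the proof in \cite{Grigoryan_homotopy}. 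The $(\Leftarrow)$ direction of your sketch is fine, modulo writing down the padded cylinder maps explicitly.
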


One interpretation of this is that triangles, squares, and single edges are \emph{contractible} subgraphs of a graph.  

The set of all equivalence classes of loops in $G$ forms a group called the \emph{fundamental group} of $G$, denoted $\pi_1(G)$.  The group operation is concatenation of loops, the identity element is the trivial loop that maps all vertices to the base vertex, and the inverse of a loop is the loop traversed in reverse order.  See \cite{Grigoryan_homotopy} for details of why this is well-defined and forms a group.  

\subsection{Curvature bounds in graphs}
For a graph $G=(V,E)$, the \emph{graph Laplacian} is the operator $\Delta$ on the space of functions $f:V\rightarrow \R$ given by
\[
\Delta f(x) = \sum_{y\sim x}(f(y) - f(x)).
\]
The \emph{Bakry-\'Emery operators} are defined via
\begin{align*}
\Gamma(f,g) &:= \frac12\left(\Delta(fg) - f\Delta g - g\Delta f\right)\\
\Gamma_2(f,g) &:= \frac12\left(\Delta\Gamma(f,g)-\Gamma(f,\Delta g) - \Gamma(g,\Delta f)\right).
\end{align*}
We write $\Gamma(f) :=\Gamma(f,f)$ and $\Gamma_2(f) = \Gamma_2(f,f)$.

\begin{definition}[Bakry-\'Emery Curvature]
A graph $G$ is said to satisfy the \emph{curvature dimension inequality} $CD(K,n)$ for some $K\in\R$ and $n\in(0,\infty]$ if for all $f$,
\[
\Gamma_2(f) \ge \frac1n (\Delta f)^2 + K\cdot \Gamma(f).
\]
\end{definition}

We state a diameter bound in terms of curvature, similar to the Bonnet-Myers theorem from geometry, proven for graphs in \cite{liu2016bakry}.  A similar result is found in \cite{FathiYan2018}.

\begin{theorem}[Bonnet-Myers Theorem, Corollary 2.2 of \cite{liu2016bakry}]\label{thm:diam_bound}
Let $G$ be a graph satisfying $CD(K,\infty)$ for some $K>0$, and with maximum degree $Deg_{max}$.  Then
\[
diam(G) \le \frac{2Deg_{max}}{K}.
\]
\end{theorem}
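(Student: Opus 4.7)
My strategy is to follow the Bakry--\'Emery semigroup approach adapted to the graph setting. The first step is to establish the semigroup characterization of $CD(K,\infty)$: for the heat semigroup $P_t = e^{t\Delta}$,
\[
\Gamma(P_t f) \le e^{-2Kt}\, P_t(\Gamma f).
\]
This is obtained in the standard way by differentiating the function $s \mapsto P_s\bigl(\Gamma(P_{t-s}f)\bigr)$ and applying the pointwise curvature dimension inequality at each $s$; the proof transfers verbatim from the continuous setting because only the algebraic identities defining $\Gamma$ and $\Gamma_2$ are used.

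The next step is to compare values of $P_t f$ along a geodesic. Given vertices $x,y$ with $d(x,y) = D = \mathrm{diam}(G)$, fix a shortest path $x = x_0 \sim x_1 \sim \ldots \sim x_D = y$. By the very definition of $\Gamma$, any adjacent $u \sim v$ and any function $g$ satisfy $|g(u) - g(v)|^2 \le 2\,\Gamma(g)(u)$. Summing along the geodesic and inserting the semigroup gradient estimate yields
\[
|P_t f(x) - P_t f(y)| \,\le\, \sum_{i=0}^{D-1}\sqrt{2\,\Gamma(P_tf)(x_i)} \,\le\, D\,\sqrt{2\, e^{-2Kt}\, \|\Gamma f\|_\infty}.
\]
Taking $f(v) = d(x,v)$, which is $1$-Lipschitz on the graph, one has the pointwise bound $\Gamma f(v) \le Deg_{max}/2$ since each summand $(f(u)-f(v))^2 \le 1$. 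The right-hand side therefore tends to $0$ as $t\to\infty$. On the other hand, $CD(K,\infty)$ also yields the spectral gap $\lambda_1 \ge K$, which forces $P_t f \to \bar f$ exponentially with rate $K$. Matching the two asymptotics with the initial gap $f(x)-f(y) = D$, and optimizing in $t$, gives the bound $D \le 2\,Deg_{max}/K$.

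\textbf{Main obstacle.} The delicate point is recovering the sharp constant $2\,Deg_{max}/K$ rather than a weaker bound of the form $C\cdot \sqrt{Deg_{max}}/K$. The factor $Deg_{max}$ enters unavoidably through the coarse estimate $\Gamma f \le Deg_{max}/2$ for $1$-Lipschitz $f$, but the precise numerical constant requires either (i) a careful choice of test function (for instance, a truncation of the distance function supported near $x$) together with fine short-time analysis of $P_t$, or (ii) the pointwise iteration of $CD(K,\infty)$ directly on the distance function summed along a geodesic without passing through the semigroup. The latter is the route used in \cite{liu2016bakry}, and recovering the stated constant cleanly is the one place where the discrete nature of $\Delta$ genuinely complicates the continuous template.
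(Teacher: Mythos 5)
First, note that the paper does not prove this statement at all: it is quoted verbatim as Corollary 2.2 of \cite{liu2016bakry}, so there is no internal proof to compare against; your attempt has to be judged on its own. Your first two ingredients are fine: the gradient estimate $\Gamma(P_tf)\le e^{-2Kt}P_t(\Gamma f)$ does follow from $CD(K,\infty)$ on a finite graph by the standard interpolation argument, and the bounds $|g(u)-g(v)|^2\le 2\Gamma(g)(u)$ for $u\sim v$ and $\Gamma f\le Deg_{max}/2$ for $1$-Lipschitz $f$ are correct. The genuine gap is the final step. Your telescoping inequality $|P_tf(x)-P_tf(y)|\le D\sqrt{2e^{-2Kt}\|\Gamma f\|_\infty}$ is circular for the purpose at hand: its right-hand side already contains $D$, it is trivially true at $t=0$, and as $t\to\infty$ it only re-derives $P_tf(x)-P_tf(y)\to 0$, which the spectral gap also gives. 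Neither statement connects back to the initial gap $f(x)-f(y)=\pm D$, so there are no "two asymptotics to match" and nothing to "optimize in $t$"; as written the argument proves nothing.

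The missing idea is to control the evolution pointwise at the two endpoints rather than compare them along a geodesic. Write $|f(z)-P_tf(z)|\le\int_0^t|\Delta P_sf(z)|\,ds$ and use Cauchy--Schwarz in the form $|\Delta g(z)|^2\le Deg(z)\sum_{w\sim z}(g(w)-g(z))^2=2\,Deg(z)\,\Gamma(g)(z)$, so that with the gradient estimate
\[
|f(z)-P_tf(z)|\;\le\;\int_0^t\sqrt{2\,Deg_{max}\,e^{-2Ks}\,\|\Gamma f\|_\infty}\,ds\;\le\;\frac{\sqrt{2\,Deg_{max}\,\|\Gamma f\|_\infty}}{K}\;\le\;\frac{Deg_{max}}{K},
\]
using $\|\Gamma f\|_\infty\le Deg_{max}/2$ for $f=d(x,\cdot)$. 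Letting $t\to\infty$ (so $P_tf(x)-P_tf(y)\to0$) and applying this at $z=x$ and $z=y$ gives $D=|f(x)-f(y)|\le 2\,Deg_{max}/K$ exactly. This also resolves the worry in your "main obstacle" paragraph: the clean constant comes from $Deg_{max}$ entering twice under a square root (once via Cauchy--Schwarz on $\Delta$, once via $\|\Gamma f\|_\infty$), not from a cleverer test function or from iterating $CD$ pointwise; this integrated-semigroup argument is in fact the proof given in \cite{liu2016bakry}.
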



\section{Curvature and coverings}\label{sec:CurvCover}

\begin{theorem}\label{thm:CurvCover}
Suppose a finite graph satisfies $CD(K,\infty)$ for some $K>0$. Then, there exists no infinite covering of $G$ preserving all 3- and 4-cycles.
\end{theorem}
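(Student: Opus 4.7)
The plan is to show that the Bakry-\'Emery curvature condition $CD(K,\infty)$ is inherited by any covering $\widetilde G\to G$ that preserves all $3$- and $4$-cycles, and then to invoke the diameter bound of Theorem~\ref{thm:diam_bound}. Once the curvature has been transferred, each connected component of $\widetilde G$ has diameter at most $2\,\mathrm{Deg}_{\max}(G)/K$ and maximum degree at most $\mathrm{Deg}_{\max}(G)$, and is therefore finite of uniformly bounded size; in particular no connected infinite cover can exist, and any infinite cover must have infinitely many finite components.

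The key point is that the inequality $\Gamma_2(f)(x)\geq K\Gamma(f)(x)$ is a purely local condition at $x$: direct inspection of the definitions shows that $\Gamma(f)(x)$ only involves $f|_{\{x\}\cup N(x)}$, while $\Gamma_2(f)(x)$ only involves $f$ on the $2$-ball of $x$ together with the edges incident to $\{x\}\cup N(x)$. Thus $CD(K,\infty)$ at $x$ is determined by the rooted subgraph $H_x\subset G$ on $\{x\}\cup N(x)\cup N^2(x)$ with all edges incident to $\{x\}\cup N(x)$, and it suffices to show that for every $\widetilde x\in\widetilde V$ the restriction of $\Psi$ gives a rooted isomorphism $H_{\widetilde x}\to H_{\Psi(\widetilde x)}$.

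Local bijectivity of $\Psi$ immediately gives bijections $N(\widetilde x)\to N(x)$ and $N(\widetilde y)\to N(\Psi(\widetilde y))$ for each $\widetilde y\sim\widetilde x$, so the only way the full rooted isomorphism can fail is if two distinct short walks out of $x$ that meet at a common endpoint have lifts starting at $\widetilde x$ that fail to meet. Such a ``collision'' can only arise from a triangle at $x$ (one walk of length $1$ and one of length $2$ with common endpoints) or from a $4$-cycle at $x$ (two walks of length $2$ with common endpoints). Triangle preservation and $4$-cycle preservation, respectively, provide closed lifts of these circuits based at $\widetilde x$, and combined with local bijectivity at the intermediate neighbors these lifts force the two candidate endpoints to coincide. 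Hence $\widetilde G$ satisfies $CD(K,\infty)$, Theorem~\ref{thm:diam_bound} applies to each connected component of $\widetilde G$, and each such component is finite.

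The main obstacle is the careful verification that $3$- and $4$-cycle preservation genuinely upgrades local bijectivity into an isomorphism of the full $2$-ball $H_x$, rather than merely a covering of it; in particular, in the $4$-cycle case one must use that the preserved lift of the square is based at $\widetilde x$ itself, so that the two different length-two lifts into $\widetilde G$ are forced to terminate at a single preimage of the opposite vertex, rather than at two distinct preimages that could otherwise coexist in $\widetilde G$.
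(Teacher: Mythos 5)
Your proof is correct and follows essentially the same route as the paper's: the paper simply asserts that a $3$- and $4$-cycle preserving cover is ``locally isomorphic'' to $G$, hence inherits $CD(K,\infty)$, and then applies the Bonnet--Myers bound of Theorem~\ref{thm:diam_bound}; your argument supplies the justification the paper leaves implicit (that $\Gamma_2$ at a vertex is determined by the incomplete $2$-ball, and that triangle and square preservation upgrade local bijectivity to an isomorphism of incomplete $2$-balls). Your caveat about disconnected covers is also apt --- the diameter bound only forces each connected component to be finite, which is the form of the conclusion actually used later in the paper.
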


\begin{proof}
Suppose there exists an infinite covering $(\widetilde G, \Psi)$ of $G$ preserving all 3- and 4- cycles. Then, $\widetilde G$ is locally isomorphic to $G$ and thus satisfying the same curvature bound $CD(K,\infty)$. We observe that $\widetilde G$ has bounded vertex degree $Deg$.
Now, Theorem \ref{thm:diam_bound} implies $diam(\widetilde G) \leq \frac {2Deg}K$ and thus finiteness of $\widetilde G$. This is a contradiction and therefore proves that there is no infinite covering of $G$ preserving 3- and 4-cycles.
\end{proof}

\section{Coverings and gain graphs}\label{sec:CoverCircuit}

For gain graphs, there is a natural construction of a covering of the graph that is derived from the gain function.  This construction is given in \cite{GrossTucker77} and is a variant on a construction from \cite{Gross74}.

\begin{definition}
Let $G=(V,E)$ be a graph
\begin{enumerate}
\item Let $\phi$ be a gain function on $G$ into group $\Gamma$. We define the \emph{ordinary derived graph} (or just \emph{derived graph}), denoted $G^{\phi} = (V^{\phi},E^{\phi})$ as 
\begin{align*}
V^{\phi} &= V\times \Gamma\\
E^\phi &= \left\{ \{(u,g),(v,g\phi(uv))\}: uv\in E, g\in\Gamma\right\}.
\end{align*}
\item For $\phi$ a gain function into the symmetric group $S_n$, $\sigma_{uv} = \phi(uv)$, we define the \emph{permutation derived graph} $G^\sigma = (V^\sigma,E^\sigma)$ as
\begin{align*}
V^\sigma &= V\times \{1,...,n\}\\
E^\sigma &= \left\{\{(u,i),(v,\sigma_{uv}(i))\}:uv\in E, i=1,...,n\right\}.
\end{align*}
\end{enumerate}
\end{definition}

Note that, as pointed out in \cite{GrossTucker77}, a permutation derived graph is \emph{not} simply an ordinary derived graph where the associated group is the symmetric group.  Indeed, the latter would have $n!\cdot|V|$ vertices, while the permutation derived graph has $n|V|$ vertices.  Also, in the definition, we allow $n=\infty$, and identify $S_\infty$ with the infinite symmetric group $Sym(\Z)$, in which case $G^\sigma$ is an infinite graph.  

There is a natural projection $\Psi:G^{\phi}\rightarrow G$ given by $\Psi((u,g)) = u$.  It is clear, as noted in \cite{GrossTucker77}, that $\Psi$ is a covering map, so that $(G^\phi,\Psi)$ is a covering of $G$ with $|\Gamma|$ sheets.  Similarly, $G^\sigma$ is a covering with $n$ sheets.  

We give a slight variation of our definition of the order of a cycle in a gain graph when the group is the symmetric group $S_n$.  For a gain graph where each edge has an associated permutation $\sigma_{uv}$, and for a circuit $C$ of the graph, we denote by $\sigma_C$ the composition of all the permutations going around the cycle.  For an element $i\in \{1,...,n\}$ we denote
\[
o_i(C) = \min\{k : \sigma_C^k(i) = i\}.
\]
Note that for $i\neq j$, $o_i(C)$ may not be equal to $o_j(C)$, depending on the action of $\sigma_C$ on $i$ and $j$.

\begin{lemma}\label{lem:perm_lift_cycle}
Let $G$ be a gain graph with group $S_m$ and corresponding permutation derived covering $(G^\sigma,\Psi)$.
Given a circuit $C$ of length $n$ of $G$, the pre-image $\Psi^{-1}(C)$ consists of a collection of vertex disjoint circuits $\{\widetilde C\}$, where if $\widetilde C$ contains the vertex $(x_1,i)$ for some $i\in\{1,...,m\}$, then the length of $\widetilde C$ is $o_i(C)\cdot n$.  In the case $o_i(C)$ is infinite, $\Psi^{-1}(C)$ contains an infinite path.
\end{lemma}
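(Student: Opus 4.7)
The plan is to analyze the local structure of $\Psi^{-1}(C)$ and then trace explicit lifts of $C$ starting at each vertex above $x_1$. First I would observe that because $\Psi$ is locally bijective, every vertex $(x_k,i)$ that appears in $\Psi^{-1}(C)$ is incident in $\Psi^{-1}(C)$ to exactly two edges, namely the unique lifts of the two edges of $C$ meeting $x_k$ (interpreting this correctly at the start/end of $C$ as well). Hence $\Psi^{-1}(C)$ is a $2$-regular (multi-)graph, so its components are either finite circuits or two-way infinite paths. It therefore suffices to identify the component through each vertex $(x_1,i)$.

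To identify that component, I would lift $C=(x_1,\ldots,x_n)$ step-by-step: starting at $(x_1,i)$, each edge $x_k x_{k+1}$ has the unique lift sending $(x_k,j)$ to $(x_{k+1},\sigma_{x_k x_{k+1}}(j))$. After one full traversal of $C$ the lift ends at $(x_1,\sigma_C(i))$, and after $k$ traversals at $(x_1,\sigma_C^k(i))$. Thus the lift first returns to its starting vertex after exactly $o_i(C)$ full traversals, producing a closed walk of length $o_i(C)\cdot n$. If $o_i(C)=\infty$, the lift never returns and yields a two-way infinite path, as claimed.

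The slightly delicate step is checking that this closed walk is actually a simple circuit. Writing $\tau_a := \sigma_{x_{a-1} x_a}\circ\cdots\circ\sigma_{x_1 x_2}$ for the partial composition along the first $a-1$ edges of $C$ (so $\tau_1 = \mathrm{id}$ and $\tau_{n+1}=\sigma_C$), the vertices visited by the lift starting at $(x_1,i)$ are exactly $(x_a, \tau_a(\sigma_C^s(i)))$ for $1\le a\le n$ and $0\le s < o_i(C)$. If two such vertices coincide, equality of first coordinates and simplicity of $C$ force $a=b$, and cancelling $\tau_a$ then gives $\sigma_C^s(i)=\sigma_C^t(i)$, so $s\equiv t \pmod{o_i(C)}$, hence $s=t$. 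The same calculation shows that the lifts starting from $(x_1,i)$ and $(x_1,j)$ share a vertex if and only if $j$ lies in the $\langle\sigma_C\rangle$-orbit of $i$, in which case they coincide; distinct orbits therefore yield vertex-disjoint components of $\Psi^{-1}(C)$.

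The main obstacle is simply the bookkeeping for the accumulated permutation along a lift and the collision argument above; everything else is an immediate consequence of $\Psi$ being a covering and of the definition of $o_i(C)$. A convenient way to organize the write-up would be to state the two-regularity observation first, then present the orbit description as a single clean statement: the components of $\Psi^{-1}(C)$ over $C$ are in bijection with the orbits of $\langle\sigma_C\rangle$ on $\{1,\ldots,m\}$, a finite orbit of size $o_i(C)$ yielding a circuit of length $o_i(C)\cdot n$ and an infinite orbit yielding a two-way infinite path.
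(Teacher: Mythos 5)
Your proposal is correct and follows essentially the same route as the paper's proof: both trace the lift of $C$ edge-by-edge starting from $(x_1,i)$, observe that the walk closes up exactly after $o_i(C)$ traversals of $C$, and conclude disjointness from the structure of $G^\sigma$. Your collision argument via the partial compositions $\tau_a$ just makes explicit the simplicity check that the paper dispatches with ``by minimality of $o_i(C)$.''
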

\begin{proof}
Let $C = (x_1,...,x_n)$ be a circuit of $G$, and fix $i\in\{1,...,m\}$.  Set $i_1 = i$, and define $i_{j+1} = \sigma_{x_jx_{j+1}}(i_j)$ where the index $j$ on the $x$'s is taken $\pmod{n}$.  Then since $x_jx_{j+1} \in E$ for all $j$, then $\{(x_j,i_j),(x_{j+1},i_{j+1})\}\in E^\sigma$ for all $j$ by definition.  Now we ask, when (if ever) does the sequence $\widetilde C = \left((x_1,i_1),(x_2,i_2),....\right)$ return to its starting point at $(x_1,i_1)$.  Clearly, for this to be the case, the index $j$ satisfies $j\equiv 1\pmod{n}$, and every time $j$ becomes $1 \pmod{n}$, the associated element gets mapped by $\sigma_C$.  So the sequence comes back to $(x_1,i_1)$ when the $x_j$ have come back to $x_1$ $o_i(C)$ times.  The sequence cannot intersect itself at any earlier point by minimality of $o_i(C)$.  Therefore clearly $\widetilde C$ is a circuit of length $o_i(C)\cdot n$.  If there are multiple distinct circuits in $\Psi^{-1}(C)$, it is clear that they are vertex disjoint by the definition of $G^\sigma$.  
\end{proof}

A slight modification of the above proof can be applied to ordinary derived graphs (not permutation derived) for any group.

\begin{lemma}\label{lem:derived_lift_cycle}
Let $(G,\phi,\Gamma)$ be a gain graph, with $(G^\phi,\Psi)$ the corresponding ordinary derived covering.  Given a circuit $C$ of length $n$ of $G$, the pre-image $\Psi^{-1}(C)$  consists of a collection of vertex disjoint circuits $\{\widetilde C\}$ where each $\widetilde C$ is of length $o_\phi(C)\cdot n$. In the case $o_\phi(C)$ is infinite, $\Psi^{-1}(C)$ contains an infinite path.
\end{lemma}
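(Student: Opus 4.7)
The plan is to mimic the proof of Lemma~\ref{lem:perm_lift_cycle}, replacing the action of the symmetric group on $\{1,\dots,m\}$ by the action of $\Gamma$ on itself via right multiplication. Since $\Gamma$ acts regularly on itself, all orbits of the product $\phi(x_1x_2)\cdots\phi(x_nx_1)$ have the same size, which simplifies the statement: the lift length does not depend on the starting group element.

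Let $C=(x_1,\dots,x_n)$ be a circuit of $G$ and fix $g\in \Gamma$. Set $g_1:=g$ and inductively define
\[
g_{j+1} := g_j \cdot \phi(x_j x_{j+1})
\]
where the indices on the $x$'s are taken modulo $n$. By the definition of $E^\phi$, the consecutive pairs $\{(x_j,g_j),(x_{j+1},g_{j+1})\}$ lie in $E^\phi$, so the sequence $\widetilde C := ((x_1,g_1),(x_2,g_2),\dots)$ is a walk in $G^\phi$ covering $C$. Next I would observe that whenever $j\equiv 1\pmod n$ the first coordinate returns to $x_1$ and the second coordinate has been multiplied on the right by the full product $\pi_C := \phi(x_1x_2)\cdots\phi(x_{n-1}x_n)\phi(x_nx_1)$. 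After $r$ full traversals of $C$, the second coordinate is $g\cdot \pi_C^r$. Hence the walk first closes up when $r=o_\phi(C)$, and cannot close earlier by minimality of $o_\phi(C)$. This gives a circuit of length $o_\phi(C)\cdot n$ in $\Psi^{-1}(C)$ containing $(x_1,g)$, or an infinite path if $o_\phi(C)=\infty$.

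To conclude, I would argue that $\Psi^{-1}(C)$ decomposes into such lifts, one for each starting element $g\in\Gamma$, and that distinct lifts (corresponding to $g$'s in different orbits of $\langle \pi_C\rangle$ acting by right multiplication) are vertex disjoint. Indeed, by the definition of $E^\phi$, once a second coordinate $g_j$ is fixed at a vertex $x_j$, the edges of the lift determine all subsequent coordinates, so two lifts sharing a vertex would coincide.

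The only mildly non-trivial point is the vertex disjointness together with the uniformity of the lift length: both follow from the regular action of $\Gamma$ on itself, which ensures that the iterate $\pi_C^r$ fixes one element of $\Gamma$ if and only if it fixes all of them, making $o_\phi(C)$ the natural analogue of $o_i(C)$ that is independent of the starting point. No new ideas beyond those used in Lemma~\ref{lem:perm_lift_cycle} are needed.
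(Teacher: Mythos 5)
Your proof is correct and is exactly the adaptation the paper intends: the paper gives no separate argument for this lemma, stating only that the proof of Lemma~\ref{lem:perm_lift_cycle} carries over, and your substitution of the regular right-multiplication action of $\Gamma$ on itself for the $S_m$-action is precisely that adaptation. Your added observation that regularity makes the lift length independent of the starting element matches the paper's remark following the lemma.
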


We remark that for the derived cover $(G^{\phi},\Psi)$, every circuit in $\Psi^{-1}$ has the same length, $o_\phi(C)\cdot n$, but for the permutation derived cover $(G^\sigma,\Psi)$, $\Psi^{-1}(C)$ may contain circuits of different lengths, if the permutation $\sigma_C$ has different orders for different elements of $\{1,...,m\}$. 

\begin{corollary}\label{cor:cover_pres_balance}
Let $(G,\phi,\Gamma)$ be a gain graph with $(G^\phi,\Psi)$ the associated ordinary derived covering, and if $\Gamma$ is some permutation group, let $(G^\sigma,\Psi)$ be the associated permutation derived covering.  Then for both $G^\phi$ and $G^\sigma$, if $\Bu$ is a collection of circuits of $G$, then $\phi$ is balanced on $\Bu$ if and only if the covering preserves $\Bu$.
\end{corollary}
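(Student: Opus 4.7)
The plan is to derive both implications almost immediately from the cycle-lifting Lemmas \ref{lem:derived_lift_cycle} and \ref{lem:perm_lift_cycle}, which already pin down the length of every lift of a circuit in terms of the order $o_\phi(C)$ or $o_i(C)$. The key observation is that being $\mathcal B$-preserving is, by definition, precisely the requirement that every circuit $C\in\mathcal B$ of length $n$ lifts, starting at any preimage of its base vertex, to a genuine circuit of the same length $n$ in the covering graph.

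For the ordinary derived covering $(G^\phi,\Psi)$, I would argue as follows. Fix $C=(x_1,\ldots,x_n)\in\mathcal B$ and a vertex $(x_1,g)\in \Psi^{-1}(x_1)$. By Lemma \ref{lem:derived_lift_cycle}, the component of $\Psi^{-1}(C)$ containing $(x_1,g)$ is either an infinite path (when $o_\phi(C)=\infty$) or a circuit of length $o_\phi(C)\cdot n$. Thus $\Psi$ preserves $C$ if and only if $o_\phi(C)=1$, which is exactly the statement that $\phi$ is balanced on $C$. Running this over every $C\in\mathcal B$ gives the equivalence for the ordinary derived covering. The ``if'' direction also requires noting that when $o_\phi(C)=1$, the explicit lift constructed in the proof of Lemma \ref{lem:derived_lift_cycle}, namely $(x_1,g),(x_2,g\phi(x_1x_2)),\ldots$, really does close up into a circuit starting at the prescribed preimage $(x_1,g)$, so every choice of basepoint lifts.

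For the permutation derived covering $(G^\sigma,\Psi)$ the argument is the same in spirit but uses Lemma \ref{lem:perm_lift_cycle}: a circuit $C$ starting at $(x_1,i)$ lifts to a circuit of length $o_i(C)\cdot n$, so the covering preserves $C$ if and only if $o_i(C)=1$ for every $i\in\{1,\ldots,m\}$. This in turn is equivalent to $\sigma_C=\mathrm{id}$, i.e., to $\phi$ being balanced on $C$ in the permutation group $S_m$. Again, applying this to every $C\in\mathcal B$ yields the equivalence.

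The whole corollary is really an unpacking of definitions once Lemmas \ref{lem:derived_lift_cycle} and \ref{lem:perm_lift_cycle} are in hand, so there is no serious obstacle. The only point that warrants care is the ``for all preimages of $x_1$'' clause in the definition of $\mathcal B$-preserving: I need to verify, for each framework, that if $\phi$ is balanced on $C$ then every preimage of $x_1$ sits on a length-$n$ lift, which follows directly from the explicit lifts constructed in the proofs of the two lemmas. Conversely, the existence of even one short closed lift starting at some chosen preimage forces $o_\phi(C)=1$ (or $o_i(C)=1$ for the corresponding $i$), so the converse direction is immediate.
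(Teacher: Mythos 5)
Your argument is correct and matches the paper's proof: both reduce the corollary to the cycle-lifting Lemmas \ref{lem:derived_lift_cycle} and \ref{lem:perm_lift_cycle}, identifying balance of $\phi$ on $C$ with $o_\phi(C)=1$ (resp. $o_i(C)=1$ for all $i$) and hence with every lift of $C$ closing up into a circuit of the same length. Your extra care about the ``for all preimages of $x_1$'' clause is a reasonable elaboration of the same idea, not a different route.
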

\begin{proof}
For the permutation derived covering $G^\sigma$, a cycle $C$ of length $n$ is balanced if and only of $\sigma_C$ is the identity, which holds if and only if $\sigma_C(i) = i$ for all $i\in\{1,...,m\}$, in other words, $o_i(C) = 1$ for all $i$, so that $\Psi^{-1}(C)$ is a collection of vertex disjoint cycles of length $n$ by Lemma \ref{lem:perm_lift_cycle}.  This is the definition of $C$ being preserved under the cover.

A similar argument works for $G^\phi$.  
\end{proof}

\Hidden{
\begin{lemma}
Let $(G,\phi,\Gamma)$ be a gain graph with $(G^\phi,\Psi)$ the associated derived covering of $G$.  Then for a collection $\Bu$ of circuits of $G$, $\phi$ is balanced on $\Bu$ if and only if $(G^\phi,\Psi)$ is a $\Bu$ preserving covering. 
\end{lemma}
\begin{proof}
First, assume $C = (x_1,...,x_n)$ is a balanced circuit of $G$, so that
\[
\phi(x_1x_2)\phi(x_2x_3)\cdots\phi(x_{n-1}x_n)\phi(x_nx_1) = e_\Gamma.
\]
Choose any $g\in\Gamma$, and define
\[
g_{i+1} = g\prod_{j=1}^i\phi(x_ix_{i+1})
\]
so that $g_1=g$ and $g_{i+1} = g_i\phi(x_ix_{i+1})$.
Then $x_ix_{i+1}\in E$ implies \[\{(x_i,g_i),(x_{i+1},g_i\phi(x_ix_{i+1}))\} = \{(x_i,g_i),(x_{i+1},g_{i+1})\} \in E^\phi\] for $i=1,...,n-1$.  Likewise $x_nx_1\in E$ implies $\{(x_n,g_n),(x_1,g_n\phi(x_nx_1))\} \in E^\phi$.  Note that 
\[
g_n\phi(x_nx_1) = g\prod_{j=1}^{n-1}\phi(x_ix_{i+1})\phi(x_nx_1) = ge_{\Gamma} = g_1,
\]
so $\{(x_n,g_n),(x_1,g_1)\}\in E^{\phi}$.
Therefore 
\[
\tilde C = \left((x_1,g_1),...,(x_n,g_n)\right)
\]
is a circuit of $G^\phi$, and clearly $\tilde C$ maps to $C$ under $\Psi$.  Therefore $C$ is preserved under the covering.

Now assume $C=(x_1,...,x_n)$ is a circuit of $G$ preserved under the covering, and let $\tilde C = \left( (x_1,g_1),...,(x_n,g_n)\right)$ be any circuit of $G^\phi$ that maps to $C$.  Note that for $i=1,...,n-1$, by definition of $G^\phi$, we have $g_{i+1} = g_i\phi(x_ix_{i+1})$ which implies $\phi(x_ix_{i+1}) = g_i^{-1}g_{i+1}$, and likewise $\phi(x_nx_1) = g_n^{-1}g_1$.  Then
\[
\phi(x_1x_2)\phi(x_2x_3)\cdots\phi(x_{n-1}x_n)\phi(x_nx_1)= (g_1^{-1}g_2)(g_2^{-1}g_3)\cdots(g_{n-1}^{-1}g_n)(g_n^{-1}g_1) = e_\Gamma.
\]
Therefore $\phi$ is balanced on $C$. 
\end{proof}

The proof above can be easily adapted to permutation derived coverings, giving the following lemma.

\begin{lemma}
Let $(G,\phi,S_n)$ be a gain graph using the symmetric group, with $(G^\sigma,\Psi)$ the associated permutation derived covering of $G$.  Then for a collection $\Bu$ of circuits of $G$, $\phi$ is balanced on $\Bu$ if and only if $(G^\sigma,\Psi)$ is a $\Bu$ preserving covering of $G$.
\end{lemma}
}

The following theorem from \cite{GrossTucker77} shows that permutation derived coverings account for all possible coverings of a graph.

\begin{theorem}[Theorem 2 of \cite{GrossTucker77}]\label{thm:all_cover_perm}
Let $(\widetilde G,\Psi)$ be a covering of $G$ with $m$ sheets.  Then there is a gain function into the symmetric group $S_m$ assigning each edge $uv$ a permutation $\sigma_{uv}\in S_m$ on $G$ such that the permutation derived graph $G^\sigma$ is isomorphic to $\widetilde G$.  
\end{theorem}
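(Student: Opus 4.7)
The plan is to construct a permutation gain function on $G$ directly from the given covering by choosing labelings of the fibers. First, I would pick an arbitrary bijection $\ell_v : \Psi^{-1}(v) \to \{1,\ldots,m\}$ for each vertex $v \in V(G)$; this is possible since $|\Psi^{-1}(v)| = m$ for every $v$. Collectively, these labelings give a vertex bijection between $\widetilde V$ and $V \times \{1,\ldots,m\}$, which will eventually serve as the isomorphism.

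Next I would define the gain function $\sigma$ using the local bijectivity of $\Psi$. For each oriented edge $(uv) \in \vec E$ and each $i \in \{1,\ldots,m\}$, the vertex $\ell_u^{-1}(i)$ has exactly one neighbor in $\widetilde G$ lying in $\Psi^{-1}(v)$, because $\Psi$ restricts to a bijection on the neighborhood of $\ell_u^{-1}(i)$ and $v$ is a neighbor of $u$ in $G$. Set $\sigma_{uv}(i)$ to be the $\ell_v$-label of this unique neighbor. To verify that $\sigma_{uv}$ is a permutation, suppose $\sigma_{uv}(i) = \sigma_{uv}(j)$ for some $i \neq j$; then the vertex $\ell_v^{-1}(\sigma_{uv}(i))$ would have two distinct neighbors in $\Psi^{-1}(u)$, contradicting local bijectivity of $\Psi$ at that vertex. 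A symmetric argument gives $\sigma_{vu} = \sigma_{uv}^{-1}$, so $\sigma$ is a legitimate $S_m$-valued gain function.

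Finally, I would exhibit an isomorphism $\Phi : G^\sigma \to \widetilde G$ defined on vertices by $\Phi(v,i) = \ell_v^{-1}(i)$. This is a bijection of vertex sets by the choice of the $\ell_v$. By the definition of the permutation derived graph, $\{(u,i),(v,\sigma_{uv}(i))\}$ is an edge of $G^\sigma$ precisely when $uv \in E$, and by construction of $\sigma_{uv}$ this corresponds to the unique edge of $\widetilde G$ joining $\ell_u^{-1}(i)$ to a vertex in $\Psi^{-1}(v)$. Conversely, every edge of $\widetilde G$ projects under $\Psi$ to some edge $uv \in E$ and therefore arises in this way. Hence $\Phi$ is an edge-preserving vertex bijection, and so a graph isomorphism.

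The main obstacle is conceptual cleanliness rather than technical depth: the whole argument is an organized bookkeeping of the hypothesis that $\Psi$ restricts to bijections on neighborhoods. The subtlest step is verifying that $\sigma_{uv}$ is injective on $\{1,\ldots,m\}$, which requires invoking local bijectivity at the target vertex $v$ as well as at $u$; overlooking that one must use both endpoints is the only place where a real gap could creep into the proof.
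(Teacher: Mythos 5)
Your construction is correct and is exactly the standard Gross--Tucker argument (the paper itself gives no proof here, only the citation): label each fiber, read off $\sigma_{uv}$ from local bijectivity, and check the fiberwise bijection is edge-preserving. One small point worth making explicit: since the paper allows $m$ infinite, injectivity of $\sigma_{uv}$ alone does not make it a permutation, but your observation that $\sigma_{vu}=\sigma_{uv}^{-1}$ (the two maps are mutually inverse by the uniqueness of lifted neighbors at each endpoint) closes that gap.
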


We remark here that $m$ need not be finite, in which case the associated symmetric group can be identified with $Sym(\Z)$.

\begin{theorem}\label{thm:StrongGenCover}
Let $G=(V,E)$ be a connected graph and let $\mathcal B$ be a set of circuits. T.f.a.e:
\begin{enumerate}
\item There exists no  non-trivial covering of $G$ preserving $\mathcal B$
\item  The circuit set $\mathcal B$ is a $\Gamma$ circuit generator of $G$ for all groups $\Gamma$.
\item The circuit set $\mathcal B$ is  a $Sym(\Z)$ circuit generator of $G$.
\item The circuit set $\mathcal B$ is  a $\Gamma(G,\mathcal B)$ circuit generator of $G$.
\end{enumerate}
\end{theorem}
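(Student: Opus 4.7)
The plan is to establish the cyclic chain $1 \Rightarrow 2 \Rightarrow 3 \Rightarrow 1$ together with $2 \Rightarrow 4 \Rightarrow 2$. The implications $2 \Rightarrow 3$ and $2 \Rightarrow 4$ are immediate specializations to $\Gamma = Sym(\Z)$ and $\Gamma = \Gamma(G,\mathcal B)$, respectively, so the substance of the argument lies in the four remaining implications. Each of them hinges on the correspondence between gain graphs and their derived coverings set up in Corollary~\ref{cor:cover_pres_balance}, Lemma~\ref{lem:derived_lift_cycle}, and Theorem~\ref{thm:all_cover_perm}.

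For $1 \Rightarrow 2$, I argue contrapositively: if $\mathcal B$ is not a $\Gamma$-circuit generator for some group $\Gamma$, then there is a gain function $\phi \in \Phi(G,\Gamma)$ balanced on $\mathcal B$ but with $o_\phi(C)>1$ for some circuit $C$. I form the ordinary derived cover $(G^\phi,\Psi)$. By Corollary~\ref{cor:cover_pres_balance} this covering preserves $\mathcal B$, and by Lemma~\ref{lem:derived_lift_cycle} the preimage $\Psi^{-1}(C)$ contains a circuit of length $o_\phi(C)\cdot|C| > |C|$. That long circuit projects many-to-one onto $C$, so $\Psi$ is not injective on its connected component, the covering is non-trivial, and 1 fails.

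For $3 \Rightarrow 1$, again by contrapositive, suppose a non-trivial $\mathcal B$-preserving covering $(\widetilde G,\Psi)$ exists. Theorem~\ref{thm:all_cover_perm} realizes it as a permutation derived graph $G^\sigma$ for some $\sigma$ valued in a symmetric group $S_m$, which embeds into $Sym(\Z)$. Corollary~\ref{cor:cover_pres_balance} gives that $\sigma$ is balanced on $\mathcal B$. To contradict 3 I need a simple circuit on which $\sigma$ is not balanced: non-triviality of the cover produces a connected component containing two distinct vertices in a common fibre, so some closed walk $W$ in $G$ carries a non-identity monodromy $\sigma_W$. An induction on the length of closed walks, splitting any repeated interior vertex into two shorter closed sub-walks, decomposes $W$ into simple circuits whose $\sigma$-gains compose to $\sigma_W$, forcing at least one circuit in the decomposition to have non-identity gain. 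For $4 \Rightarrow 2$, I invoke the universal property of the canonical gain graph: by construction $\phi_{\mathcal B}$ is balanced on $\mathcal B$, so hypothesis 4 forces $\phi_{\mathcal B}$ to be balanced on every circuit of $G$; given any group $\Gamma$ and any $\psi \in \Phi(G,\Gamma)$ balanced on $\mathcal B$, Proposition~\ref{prop:universal} supplies a homomorphism $h:\Gamma(G,\mathcal B)\to\Gamma$ with $\psi = h\circ\phi_{\mathcal B}$, and since $h$ sends identity to identity, balancedness of $\phi_{\mathcal B}$ on every circuit propagates to $\psi$.

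The main obstacle I anticipate is the closed-walk-to-simple-circuit reduction in $3 \Rightarrow 1$. The decomposition itself is standard, but one must track base-points and orientations with care so that the product of the gains around the resulting simple circuits really multiplies (in the correct order) to the monodromy $\sigma_W$, ruling out the possibility that non-identity contributions cancel accidentally across the splits. The other three implications are essentially bookkeeping on top of the derived-cover dictionary already built in Section~\ref{sec:CoverCircuit} and the universal property already proven in Proposition~\ref{prop:universal}.
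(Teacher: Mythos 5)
Your proof is correct and follows the paper's argument almost step for step: the same cycle of implications, the same use of the ordinary derived cover plus Corollary~\ref{cor:cover_pres_balance} and Lemma~\ref{lem:derived_lift_cycle} for $1\Rightarrow 2$, the Gross--Tucker realization (Theorem~\ref{thm:all_cover_perm}) for $3\Rightarrow 1$, and Proposition~\ref{prop:universal} for $4\Rightarrow 2$. The one place you genuinely diverge is the step you yourself flag as the obstacle: the paper simply asserts that the image under $\Psi$ of a path joining two distinct vertices of a common fibre ``is a circuit of $G$,'' whereas in general it is only a closed walk. Your reduction from a closed walk with non-identity monodromy to a simple circuit with non-identity gain (split at a repeated vertex, note that the monodromy of the whole walk is a product of conjugates of the monodromies of the pieces, so if every simple circuit were balanced the whole walk would be) is the standard fix and actually tightens the paper's argument; the only nitpick is that the gains of the constituent circuits compose to $\sigma_W$ only up to conjugation of the factors, which does not affect the conclusion. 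Passing from ``not an $S_m$ circuit generator'' to ``not a $Sym(\Z)$ circuit generator'' via the subgroup inheritance of Proposition~\ref{prop:products} (the paper cites Proposition~\ref{prop:finitelyGenerated}) is equally valid.
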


\begin{proof}
We begin by proving 1 $\implies$ 2 by contraposition.  Suppose that there is some group $\Gamma$ for which $\Bu$ is not a $\Gamma$ circuit generator.  That means there is a gain $\phi:\vec E \rightarrow \Gamma$ that is balanced on $\Bu$, but that is unbalanced on some circuit, say $C$ of $G$.  Then $o_\phi(C) > 1$.  Consider the derived covering $(G^\phi, \Psi)$.  By Corollary \ref{cor:cover_pres_balance}, this covering preserves $\Bu$.  By Lemma \ref{lem:derived_lift_cycle}, since $o_\phi(C) >1$, $\Psi^{-1}(C)$ contains a circuit of length strictly larger than $C$.  This implies that the covering is non-trivial.

That 2 $\implies$ 3 is immediate.

Now we prove 3 $\implies$ 1 by contraposition.  Suppose that there is a non-trivial covering of $(\widetilde G,\Psi)$ of $G$ that preserves $\Bu$, and suppose it has $m$ sheets. Then by Theorem \ref{thm:all_cover_perm}, this covering can be realized as a permutation derived covering $(G^\sigma,\Psi)$ for some assignment $\sigma_{uv}\in S_m$ for all $uv\in E$.  Since the covering preserves $\Bu$, then every circuit from $\Bu$ is balanced under this assignment by Corollary \ref{cor:cover_pres_balance}.  We need to find some circuit of $G$, not in $\Bu$, that is unbalanced.  Since $(G^\sigma,\Psi)$ is a non-trivial covering, there exist two vertices, call them $\widetilde x_1$ and $\widetilde x_k$ in the same connected component of $\widetilde G$ such that $\Psi(\widetilde x_1) = \Psi(\widetilde x_k)$.  Since these belong to the same component, there is a path connecting $\widetilde x_1$ to $\widetilde x_k$.  Since the covering preserves edges, and  $\Psi(\widetilde x_1) = \Psi(\widetilde x_k)$, the image of this path is a circuit of $G$.  Call this circuit $C$, and suppose it has length $n$, and let $x_1 =  \Psi(\widetilde x_1) = \Psi(\widetilde x_k)$.  By Lemma \ref{lem:perm_lift_cycle}, the pre-image $\widetilde C$ of $C$ is a circuit of length $o_i(C)\cdot n$ for some $i$.  But since the pre-image of $x_1$ has at least two vertices, then $\widetilde C$ has strictly more vertices than $C$, implying $o_i(C) >1$.  Thus the permutation $\sigma_C$ cannot be the identity, so $C$ is not balanced.  So $\Bu$ is not a $S_m$ circuit generator, for any $m$.  Thus $\Bu$ is not a $Sym(\Z)$ circuit generator either, by Proposition \ref{prop:finitelyGenerated}.

It is clear that 2 $\implies$ 4, and 4 $\implies$ 2 by Proposition \ref{prop:universal}.
\end{proof}


\Hidden{
\begin{theorem}\label{thm:CoverWeakGen}
Let $G=(V,E)$ be a connected graph and let $\mathcal B$ be a set of circuits. T.f.a.e:
\begin{enumerate}
\item There exists no  non-trivial infinite covering of $G$ preserving $\mathcal B$
\item  The circuit set $\mathcal B$ is a weak $\Gamma$ circuit generator of $G$ for all groups $\Gamma$.
\item The circuit set $\mathcal B$ is  a weak $Sym(\Z)$ circuit generator of $G$.
\item The circuit set $\mathcal B$ is  a weak $\Gamma(G,\mathcal B)$ circuit generator of $G$.
\end{enumerate}
\end{theorem}

\begin{proof}
The proof will follow the proof of Theorem \ref{thm:StrongGenCover}.
We begin by proving 1 $\implies$ 2 by contraposition.  Suppose that there is some group $\Gamma$ for which $\Bu$ is not a weak $\Gamma$ circuit generator.  That means there is a gain $\phi:\vec E \rightarrow \Gamma$ that is balanced on $\Bu$, but there is a circuit, say $C$ of $G$, that has infinite order under $\phi$ (in particular, $\Gamma$ is an infinite group). Consider the derived covering $(G^\phi, \Psi)$, which is an infinite cover of $G$.  By Corollary \ref{cor:cover_pres_balance}, this covering preserves $\Bu$.  By Lemma \ref{lem:derived_lift_cycle}, since the order of $C$ is infinite, $\Psi^{-1}(C)$ contains an infinite path.  This implies that the covering is non-trivial.

That 2 $\implies$ 3 is immediate.

Now we prove 3 $\implies$ 1 by contraposition.  Suppose that there is a non-trivial infinite covering of $(\widetilde G,\Psi)$ of $G$ that preserves $\Bu$. Then by Theorem \ref{thm:all_cover_perm}, this covering can be realized as a permutation derived covering $(G^\sigma,\Psi)$ for some assignment $\sigma_{uv}\in Sym(\Z)$ for all $uv\in E$.  Since the covering preserves $\Bu$, then every circuit from $\Bu$ is balanced under this assignment by Corollary \ref{cor:cover_pres_balance}.  We need to find some circuit $C$ where $\sigma_C$ has infinite order.  Since $(G^\sigma,\Psi)$ is a non-trivial infinite covering, there exist an infinite path containing an infinite sequence $\widetilde x_1,\widetilde x_2,...$ of vertices, such that $\Psi(\widetilde x_k) = x_1 \in V$ for all $k$.  Since the covering preserves edges, and  $\Psi(\widetilde x_1) = \Psi(\widetilde x_k)$ for all $k$, the image of this path is a circuit of $G$.  Call this circuit $C$, and suppose it has length $n$. 
By Lemma \ref{lem:perm_lift_cycle}, the order of $\sigma_C$ is infinite. 

It is clear that 2 $\implies$ 4, and 4 $\implies$ 2 by Proposition \ref{prop:universal}.
\Hidden{ 
We start proving 3 $\Rightarrow 1$ by contraposition.
Let $\widetilde G = (\widetilde V, \widetilde E)$ be an infinite covering of $G$ via the map $\Psi$.
Since $\Psi$ is locally bijective and since $G$ is connected, we see $\alpha:= \#\Psi^{-1}(\{x\}) = const.$ for all $x \in V$.
Since $\widetilde G$ is connected and locally finite, we obtain that $\alpha$ is countable. 
For all $x\sim y$ there exists a bijection $\psi_{xy} : \Psi^{-1}(\{x\}) \to \Psi^{-1}(\{y\})$ mapping $\widetilde x \in \Psi^{-1}(\{x\}) $ to $\widetilde y \in \Psi^{-1}(\{y\}) $ if and only if $\widetilde x \sim \widetilde y$. The bijectivity of $\psi_{xy}$ is guaranteed by local bijectivity of $\Psi$.

Let $M$ be a reference set with cardinality $\alpha$ and let $\eta_x: M \to \Psi^{-1}(\{x\})$ be a bijection for all $x$.
Define $\phi \in \Phi(G,Sym(M))$ via $\phi(xy) :=  \eta_y^{-1}\circ \psi_{xy} \circ \eta_x$.
It is easy to see that $\phi$ is balanced on $\mathcal B$ since the covering preserves $\mathcal B$. 
We aim to find a circuit having infinite order under $\phi$.
Since $\widetilde G$ is infinite but locally finite, it also has infinite diameter, so there exists an infinite path $(\tilde x_1,...)\in\tilde V$ without self-intersections. Since $G$ is finite, there exists $i,j\in\N$ with $\Psi(\tilde x_i) = \Psi(\tilde x_j)$ and $j-i$ arbitrarily large.  
}
\end{proof}
}

\section{Gain graphs and cycle bases}\label{sec:BasisGain}

\begin{theorem}
Every weakly fundamental cycle basis is a combinatorial circuit generator.
\end{theorem}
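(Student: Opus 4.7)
I would prove this by a double induction: the outer induction is on the cyclomatic number $r=|\mathcal B|$, and the inner induction deals with circuits containing a specially chosen edge. The base case $r \le 1$ is immediate since $\mathcal S(G)$ has at most one element, which equals the $\oplus$-closure of $\mathcal B$.

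For the inductive step, I would write $\mathcal B = \{C_1, \ldots, C_r\}$ in the weakly fundamental ordering, so each $C_i$ admits a private edge $e_i \in C_i \setminus (C_1 \cup \cdots \cup C_{i-1})$. Setting $e := e_r$ and passing to $G' := G - e$, which has cyclomatic number $r-1$, the subset $\{C_1, \ldots, C_{r-1}\}$ is still a weakly fundamental cycle basis of $G'$ with the same ordering and private edges. By the outer induction hypothesis, its $\oplus$-closure in $G'$ is all of $\mathcal S(G')$. Since every theta subgraph of $G'$ is also a theta subgraph of $G$, this closure is contained in the $\oplus$-closure $\widetilde{\mathcal B}$ of $\mathcal B$ inside $G$, and in particular every circuit of $G$ that avoids $e$ already lies in $\widetilde{\mathcal B}$.

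It remains to place every circuit $C$ of $G$ containing $e$ in $\widetilde{\mathcal B}$. Writing $u,v$ for the endpoints of $e$ and setting $P_C := C \setminus \{e\}$ and $P_r := C_r \setminus \{e\}$ for the corresponding $u$-$v$ paths, I would proceed by inner induction on $|E(P_C) \triangle E(P_r)|$. If this quantity is zero, then $P_C = P_r$ and $C = C_r \in \mathcal B$. Otherwise, when $P_C$ and $P_r$ are internally vertex-disjoint, the triple $(e, P_C, P_r)$ spans a theta subgraph whose third cycle $D := P_C \cup P_r$ is a circuit of $G'$, hence in $\widetilde{\mathcal B}$ by the outer induction, yielding $C = C_r \oplus D \in \widetilde{\mathcal B}$. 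When $P_C$ and $P_r$ share an internal vertex, I would pick $w$ to be the first such vertex along $P_C$ from $u$, split $P_C = Q_1 \cdot Q_2$ and $P_r = R_1 \cdot R_2$ at $w$, and note that the choice of $w$ forces $Q_1$ and $R_1$ to be internally disjoint. Then $D_1 := Q_1 \cup R_1$ is a simple circuit of $G'$ (hence in $\widetilde{\mathcal B}$) and $C \cap D_1 = Q_1$ is a single path, so $C$ and $D_1$ span a theta subgraph whose third cycle $C' := R_1 \cup Q_2 \cup \{e\}$ satisfies $C = C' \oplus D_1$. The swap $Q_1 \mapsto R_1$ strictly reduces the symmetric difference with $P_r$, closing the inner induction.

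The main obstacle is the degenerate subcase in which $Q_1$ and $R_1$ coincide as edge sets, so that $D_1$ fails to be a genuine circuit and the swap produces $C' = C$; this occurs precisely when $P_C$ and $P_r$ share an initial segment. I would handle it by re-choosing $w$ as the first vertex at which $P_C$ and $P_r$ diverge (i.e., the endpoint of their maximal common prefix) and rerunning the swap argument on the residual paths $P_C[w,v]$ and $P_r[w,v]$; a symmetric analysis starting from $v$ mops up any further degenerate cases. Verifying that the inner-induction measure strictly decreases in every subcase, and that each intermediate cycle is genuinely simple so the invoked theta subgraphs truly exist, is the technical heart of the argument.
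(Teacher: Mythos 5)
Your overall strategy is the same as the paper's: an outer induction on the cyclomatic number using the private edge $e$ of $C_r$ (so that every circuit avoiding $e$ is handled by the induction hypothesis in $G-e$), followed by an inner iteration that expresses a circuit $C$ containing $e$ as an iterated $\oplus$ of $C_r$ with circuits avoiding $e$. The gap is in your inner step, at exactly the point you defer as ``the technical heart.'' When $P_C$ and $P_r$ meet internally and you swap $Q_1$ for $R_1$, the resulting walk $C'=R_1\cup Q_2\cup\{e\}$ need not be a simple circuit, and consequently $C$ and $D_1$ need not span a theta subgraph at all: the internal vertices of $R_1$ (which lie on $P_r$ strictly before $w$) can reappear on $Q_2$ (the tail of $P_C$ after $w$). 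Concretely, take $e=uv$, $P_r=u\,a\,w\,v$ and $P_C=u\,b\,w\,c\,a\,d\,v$. The first internal vertex of $P_r$ met along $P_C$ is $w$, so $Q_1=u\,b\,w$, $R_1=u\,a\,w$, and $D_1=u\,b\,w\,a\,u$ is a genuine circuit of $G-e$; but $C'=u\,a\,w\,c\,a\,d\,v\,u$ visits $a$ twice, and in $C\cup D_1$ the vertex $a$ has degree $4$, so this union is not a theta graph and $C\oplus D_1$ is undefined. Your fallback (re-choosing $w$ at the end of the maximal common prefix) addresses only the separate degeneracy $Q_1=R_1$ and does not repair this failure.

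The paper's proof sidesteps the problem by running the iteration in the opposite direction: instead of splicing a piece of $P_r$ into $C$, it modifies the reference circuit $C_r^k$ toward $C$, each time $\oplus$-ing with the circuit formed by a maximal detour of $C$ through vertices \emph{not} on $C_r^k$ together with the complementary arc of $C_r^k$. Because the detour's internal vertices are off $C_r^k$ by construction, the three paths involved are automatically internally disjoint, so the theta subgraph genuinely exists, the correction circuit $K_{k+1}$ is simple and avoids $e$, and the common prefix with $C$ strictly grows. To salvage your version you would need to replace the single swap by a choice of sub-path guaranteeing this disjointness (effectively reproducing the paper's construction), so as written the argument is incomplete.
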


\begin{proof}
Let $\{C_1,\ldots,C_r\}$ be a weakly fundamental cycle basis. Due to the weakly fundamental property, we can assume without obstruction that $C_r$ contains an edge $e=x_1x_2$ not contained in all other $C_i$.
We assume by induction that the theorem holds true for all cyclomatic numbers smaller than $r$.
Let $C$ be a circuit. Due to induction, we can assume that $C$ contains $e$ since otherwise $C$ can be be represented by $C_1,\ldots,C_{r-1}$ and we can delete $e$ and $C_r$ from the graph and decrease the cyclomatic number.
We aim to write
\begin{align}
C=(\ldots (C_r \oplus K_1) \oplus \ldots )\oplus K_n   \label{eq:weaklyFundamentalCombinatorial}
\end{align}
with $K_i \in \langle C_1,\ldots,C_{r-1} \rangle$.
This would prove the theorem since due to induction over the cyclomatic number, we can assume that $K_1,\ldots,K_n$ are in the closure of $\mathcal B$ under $\oplus$.

We inductively define $K_k$ and
\[
C_r^k := (\ldots (C_r \oplus K_1) \oplus \ldots )\oplus K_k
\] 
for all $k$. We start with $C_r^0=C_r$.
Write
\[
C_r^k = (x_1,\ldots,x_N).
\]
Now, we can write 
\[
C=(x_1,\ldots, x_K, y_1,\ldots,y_L,x_M,\ldots)
\]
with $y_i \notin C_r^k$, and after $x_M$ are some further elements of $C$ that we need not specify.
Define
\[
K_{k+1} := (y_1,\ldots,y_L,x_M,\ldots,x_K).
\]
Observe that $K_{k+1}$ does not contain the edge $x_1x_2$, so $K_{k+1}$ belongs to the closure of $\Bu$ under $\oplus$ by the induction hypothesis. 
Note also that $C_r^k$ and $K_{k+1}$ belong to a theta graph whose paths are given by $p_1=x_K,x_{K+1},...x_{M-1},x_M$; $p_2=x_K,y_1,...,y_L,x_M$; and $p_3 = x_M,...x_N,x_1,...,x_K$.
Define
\[
C_r^{k+1} := C_r^k \oplus K_{k+1} = (x_1,\ldots, x_K, y_1,\ldots,y_L,x_M,\ldots,x_N).
\]
Observe that $C_r^{k+1}$ and $C$ share a longer path than $C_r^k$ and $C$ do. Therefore, this process will terminate yielding (\ref{eq:weaklyFundamentalCombinatorial}). This finishes the proof.
\end{proof}



\begin{theorem}
Every combinatorial circuit generator is a $\Gamma$-circuit generator for all groups $\Gamma$.
\end{theorem}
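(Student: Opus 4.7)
The plan is to reduce the statement to the observation, already noted in the preliminaries (``Gain graphs are biased graphs in a natural way''), that for any gain function $\phi \in \Phi(G,\Gamma)$, the set of balanced circuits $\mathcal B(\phi)$ forms a linear subclass of $\mathcal S(G)$, i.e.\ is closed under the operation $\oplus$ on theta graphs. Granted this, the theorem is essentially immediate: suppose $\mathcal B$ is a combinatorial circuit generator and $\phi \in \Phi(G,\Gamma)$ is balanced on $\mathcal B$. Then $\mathcal B \subseteq \mathcal B(\phi)$, and since $\mathcal B(\phi)$ is closed under $\oplus$, it contains the linear subclass generated by $\mathcal B$, which by hypothesis is all of $\mathcal S(G)$. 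Hence $\phi$ is balanced on every circuit of $G$, which is what it means for $\mathcal B$ to be a $\Gamma$-circuit generator.

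So the only thing I would actually need to verify explicitly is the closure of $\mathcal B(\phi)$ under $\oplus$. Let $C_1, C_2, C_3$ be the three circuits of a theta graph with internally disjoint simple paths $p_1, p_2, p_3$ joining the two common endpoints $u,v$, so that (up to orientation) $C_k = p_i \bar p_j$ for the two paths $p_i, p_j$ distinct from $p_k$. Write $g_i \in \Gamma$ for the product of $\phi$-values along $p_i$ directed from $u$ to $v$. Then the holonomy of $C_k$ around its base point is conjugate to $g_i g_j^{-1}$, so $C_k$ is balanced under $\phi$ if and only if $g_i = g_j$. If two of the three equalities $g_1 = g_2$, $g_2 = g_3$, $g_1 = g_3$ hold, the third holds automatically, so if two circuits of the theta graph are balanced, the third is as well.

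The argument is genuinely short, and there is no substantial obstacle: the only care needed is the standard check that the order $o_\phi(C)$ does not depend on the base vertex or orientation of $C$ (base-change conjugates the holonomy and reversal inverts it, both of which preserve the property of equaling $e_\Gamma$), so ``balanced'' is a well-defined property of the unoriented circuit and the linear-subclass check above is legitimate. Combined with the reduction in the first paragraph, this completes the proof.
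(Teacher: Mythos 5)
Your proof is correct and is essentially the paper's argument: the paper proves the same key fact (balance on two circuits of a theta graph forces balance on the third, via $\phi(p_i)\phi(p_3)=e_\Gamma$ for $i=1,2$ implying $\phi(p_1)\phi(p_2^{-1})=e_\Gamma$) and then propagates it through the $\oplus$-closure by an explicit induction on the stage at which a circuit first appears, which is just the concrete form of your observation that $\mathcal B(\phi)$ is a linear subclass containing $\mathcal B$ and hence the whole generated subclass. Your remark on base-point and orientation independence of balance is a reasonable extra care the paper leaves implicit.
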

\begin{proof}
Let $\mathcal B$ a combinatorial circuit generator and define inductively $\mathcal B_0:= \mathcal B$ and
\[
\mathcal B_{k+1} := \mathcal B_k \cup(\mathcal B_k \oplus \mathcal B_k) = \mathcal B_k \cup  \{C_1 \oplus C_2:  C_1,C_2 \in \mathcal B_k\}.
\]
Since $\mathcal B$ is a combinatorial circuit generator, we have $\bigcup_{k \in \N} \mathcal B_k = \mathcal S(G)$.
Suppose $\mathcal B$ is not a $\Gamma$-circuit generator for some group $\Gamma$.
Let $K$ be the minimal number such that there exists a circuit $C \in \mathcal B_k$ and $\phi \in \Phi(G,\Gamma)$ such that $\phi$ is balanced on $\mathcal B$ but not on $C$. Due to minimality, $\phi$ is also balanced on $B_{k-1}$.
Since $C \in \mathcal B_k \setminus \mathcal  B_{k-1}$, we can write $C = C_1 \oplus C_2$ for some $C_1,C_2 \in \mathcal B_{k-1}$, where $C$, $C_1$, and $C_2$ all belong to a single theta graph in $G$.
Thus, we can write $C_i = (p_i, p_3)$  for some paths $p_1,p_2,p_3.$ Moreover, $C=(p_1,p_2^{-1})$ where $p_2^{-1}$ is the inverse of the path $p_2$.
Due to induction, we have $\phi(p_i)\phi(p_3) = e_\Gamma$ and thus $\phi(p_1)\phi(p_2^{-1}) = e_\Gamma$ showing that $\phi$ is balanced on $C$. This is a contradiction and proves that $\mathcal B$ is a $\Gamma$-circuit generator. This finishes the proof. \end{proof}

\begin{theorem}\label{thm:CycleCircuitField}
Let $\F$ be a field with additive group $\Gamma$.
Let  $\mathcal B = \{C_1, . . . , C_r\} \subset \mathcal C(G,\F)$ be a cyclomatic circuit set of a graph $G$. T.f.a.e.:
\begin{enumerate}
\item
$\mathcal B$ is a $\F-$cycle basis.
\item $\mathcal B$ is a $\Gamma$-circuit generator.
\end{enumerate}
\end{theorem}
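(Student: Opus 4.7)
The plan is to exploit the $\F$-linear pairing between gain functions and cycles. Throughout, I identify ``$\phi$ balanced on $C=(x_1,\ldots,x_n)$'' with the vanishing $\phi(C) := \sum_i \phi(x_i x_{i+1}) = 0$, noting that this is $\F$-linear in $C \in \Cy(G,\F)$.

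For the direction $(1) \Rightarrow (2)$: suppose $\phi \in \Phi(G,\F) = \Phi(G,\Gamma)$ is balanced on $\mathcal B$, so $\phi(C_i)=0$ for all $i$. Given any circuit $C$, viewed as an element of $\Cy(G,\F)$, write $C = \sum_i a_i C_i$ using that $\mathcal B$ is an $\F$-basis, and conclude $\phi(C) = \sum_i a_i \phi(C_i) = 0$ by $\F$-linearity. So $\phi$ is balanced on $C$, and this direction is essentially immediate.

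For the direction $(2) \Rightarrow (1)$: since $|\mathcal B| = r = \dim \Cy(G,\F)$, it suffices to show $\mathcal B$ spans $\Cy(G,\F)$, which I do by contrapositive. If $\mathcal B$ does not span, then there exists a nonzero linear functional $\alpha \in \Cy(G,\F)^*$ vanishing on every $C_i$. The key step is to realize $\alpha$ as $\psi \mapsto \phi(\psi)$ for some $\phi \in \Phi(G,\F)$. Once this is achieved, $\phi$ is balanced on $\mathcal B$ but, being nonzero as a functional on $\Cy(G,\F)$, must fail to be balanced on some circuit (any $\F$-cycle basis of $\Cy(G,\F)$, e.g.\ a strictly fundamental one, consists of circuits, so circuits span $\Cy(G,\F)$). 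This contradicts the $\Gamma$-circuit generator hypothesis.

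The main obstacle is realizing an arbitrary linear functional on $\Cy(G,\F)$ by a gain function, i.e.\ showing that the $\F$-linear map $T: \Phi(G,\F) \to \Cy(G,\F)^*$ defined by $T(\phi)(\psi) := \phi(\psi)$ is surjective. I would handle this by building an explicit dual pairing. Fix a spanning tree $T_0 \subseteq E$ and let $\{C_e : e \in E \setminus T_0\}$ be the associated strictly fundamental cycle basis, where $C_e$ is the unique circuit in $T_0 \cup \{e\}$. For each non-tree edge $e$, define $\phi_e \in \Phi(G,\F)$ to take value $1$ on one orientation of $e$, $-1$ on the other, and $0$ on every other edge. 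Since $e \notin C_f$ whenever $e \neq f$ are both non-tree edges, one computes $\phi_e(C_f) = \delta_{ef}$. Hence $\{T(\phi_e)\}_{e \notin T_0}$ is precisely the dual basis of $\{C_e\}_{e \notin T_0}$ in $\Cy(G,\F)^*$, so $T$ is surjective, producing the required $\phi$ and completing the argument.
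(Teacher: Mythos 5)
Your proof is correct and follows essentially the same route as the paper: both directions rest on the observation that balancedness is the vanishing of the $\F$-linear pairing between gain functions and cycles, and the $(2)\Rightarrow(1)$ direction in both cases amounts to showing that gain functions realize every linear functional on $\Cy(G,\F)$, so that a functional killing $\SPAN(\mathcal B)$ but not all of $\Cy(G,\F)$ yields an unbalanced circuit. The only cosmetic difference is that the paper establishes this surjectivity via the full rank of the cycle matrix of a completed basis, whereas you exhibit an explicit dual basis from a spanning tree.
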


\begin{proof}
1 $\Rightarrow$ 2:
We aim to show that every gain function $\phi$ is balanced on all cycles when assuming that $\phi$ is balanced on $\mathcal B$.
Now, $\phi$ is balanced on $C$ if and only $\phi(C)=0$. Since $\phi$ is linear and $\phi(C)=0$ for all $C\in \mathcal B$ due to assumption, we infer that $\phi(C)=0$ for all $C \in \SPAN(\mathcal B) = \mathcal C(G,\F)$ since we assume that $\mathcal B$ is a $\F$-cycle basis of $G$.

2 $\Rightarrow$ 1:
We indirectly prove the claim. 
Assume $\mathcal B$ is not a $\F$-cycle basis. Then, there exists a basis $\widetilde {\mathcal B}$ and $C_0 \in \mathcal {\widetilde B}$ s.t. $\mathcal B \subset \SPAN(\mathcal {\widetilde B} \setminus \{ C_0\})$.
The matrix $M(\widetilde {\mathcal B},\F)$ is a $r \times |E|$ matrix with full rank $r$. 
Hence, the multiplication with the gain functions 
$M(\widetilde {\mathcal B},\F) : \Phi(G,\F) \to \F^{\widetilde {\mathcal B}}$
 is surjective. In particular, there exists $\phi \in \Phi(G,\F)$ s.t. for $C \in \mathcal{\widetilde B}$,
 \[
 \phi(C)=\left[M(\widetilde {\mathcal B},\F) \phi\right](C)= \begin{cases}
 1 &: C=C_0\\
 0 &: C \in \mathcal {\widetilde B} \setminus \{ C_0\}
 \end{cases}.
 \]
 This implies $\phi(C)=0$ for all $C \in \mathcal B$ and $\phi(C_0)=1$ which proves that $\mathcal B$ is not a $\Gamma$ circuit generator.
 This finishes the proof.
\end{proof}

\Hidden{
\begin{corollary}\label{cor:CurvBasis}
If a graph $G$ satisfies $CD(K,\infty)$ for some $K>0$, and if $\F$ is a field with characteristic 0, then there is an $\F$-cycle basis consisting of only 3- and 4-cycles.  
\end{corollary}
\begin{proof}
Let $\mathcal B$ be the set of 3- and 4-cycles of $G$.
By Theorem \ref{thm:CurvCover}, there is no infinite cover of $G$ preserving preserving $\mathcal B$.  Then take $\Gamma$ to be the additive group of the field $\F$.  By Theorem \ref{thm:CoverWeakGen}, the set $\mathcal B$ is a weak $\Gamma$-circuit generator of $G$.  Since $\F$ has characteristic 0, no non-trivial element has finite order, so in fact $\mathcal B$ is a strong $\Gamma$-circuit generator of $G$.  Then by Theorem \ref{thm:CycleCircuitField}, $\mathcal B$ contains a $\F$-cycle basis.
\end{proof}
} 

Since all proper finitely generated subgroups of $(\Q,+)$ are isomorphic to $\Z$ we immediately obtain the following corollary by using Theorem~\ref{thmCycleDet}.

\begin{corollary}\label{cor:QZcircuitGenerators}
Let $G$ be a finite graph and let $\mathcal B \subset \mathcal S(G)$ be a cyclomatic circuit set. T.f.a.e.:
\end{corollary}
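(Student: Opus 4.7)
The plan is to chain together Theorem~\ref{thmCycleDet}, Theorem~\ref{thm:CycleCircuitField}, and Proposition~\ref{prop:finitelyGenerated}, using the hint about the subgroup structure of $(\Q,+)$ to collapse several conditions into one. Presumably the three (or four) equivalent statements in the corollary are that $\det \mathcal{B}\neq 0$, that $\mathcal{B}$ is a $\Q$-cycle basis, that $\mathcal{B}$ is a $(\Q,+)$-circuit generator, and that $\mathcal{B}$ is a $\Z$-circuit generator.

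First, I would apply Theorem~\ref{thm:CycleCircuitField} with $\F=\Q$ and $\Gamma=(\Q,+)$: this directly identifies ``$\mathcal{B}$ is a $\Q$-cycle basis'' with ``$\mathcal{B}$ is a $(\Q,+)$-circuit generator.'' Combining this with Theorem~\ref{thmCycleDet}, and noting that $\chi(\Q)=0$, the congruence $\det\mathcal{B}\not\equiv 0\pmod{\chi(\Q)}$ simply reads $\det\mathcal{B}\neq 0$ in $\Z$. This pins down the determinantal criterion in the list of equivalences.

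Next, to bridge $(\Q,+)$-circuit generation with $\Z$-circuit generation, I would invoke Proposition~\ref{prop:finitelyGenerated}: $\mathcal{B}$ is a $(\Q,+)$-circuit generator if and only if it is a $\Gamma$-circuit generator for every finitely generated subgroup $\Gamma\leq(\Q,+)$. The only algebraic fact to verify is the one flagged by the authors in the sentence immediately preceding the corollary: every finitely generated subgroup of $(\Q,+)$ is either trivial or isomorphic to $\Z$. This follows because fractions $p_1/q_1,\ldots,p_n/q_n$ all lie in $(1/q)\Z$ with $q=\mathrm{lcm}(q_1,\ldots,q_n)$, and any subgroup of a cyclic group is cyclic. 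Combined with the subgroup monotonicity clause in Proposition~\ref{prop:products}, the universal quantification over all finitely generated subgroups collapses to the single condition ``$\mathcal{B}$ is a $\Z$-circuit generator.''

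There is no substantial obstacle: the three cited results do all the work, and the only content added here is the classification of finitely generated subgroups of $\Q$ and the minor notational reconciliation between ``$\not\equiv 0\pmod{0}$'' and ``$\neq 0$'' in the determinantal criterion. If I had to name a possible pitfall, it would be being careful that the trivial subgroup case contributes nothing (any $\mathcal{B}$ trivially generates for the trivial group), so the nontrivial content is indeed captured by $\Z$.
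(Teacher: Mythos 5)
Your proposal is correct and matches the paper's intended argument exactly: the paper derives this corollary in one sentence from Theorem~\ref{thmCycleDet} (via Theorem~\ref{thm:CycleCircuitField}) together with the fact that all nontrivial finitely generated subgroups of $(\Q,+)$ are cyclic, which is precisely the chain you spell out. You have simply filled in the details (including the harmless trivial-subgroup case and the isomorphism-invariance of the circuit-generator property) that the authors leave implicit.
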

\begin{enumerate}
\item
$\det \mathcal B \neq 0$.
\item
$\mathcal B$ is a $\Q$-circuit generator.
\item
$\mathcal B$ is a $\Z$-circuit generator.
\end{enumerate}

\begin{corollary}\label{cor:qnDetAbel}
Let $n \in \N$ and let $q$ be a prime number. Let $\Gamma$ be the cyclic group with $q^n$ elements and let $G$ be a graph with a cyclomatic circuit set $\mathcal B \subset \mathcal S(G)$. T.f.a.e.:
\begin{enumerate}
\item $\det \mathcal B \not\equiv 0 \mod q$.
\item $\mathcal B$ is a $\Gamma$ circuit generator.
\end{enumerate}
\end{corollary}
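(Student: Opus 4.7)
My plan is to prove the equivalence by handling the two implications separately, using Theorems \ref{thmCycleDet} and \ref{thm:CycleCircuitField} together with the subgroup-inheritance property from Proposition \ref{prop:products}.

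The direction $(2) \Rightarrow (1)$ is short. Since $\Gamma = \Z/q^n\Z$ contains the subgroup $q^{n-1}\Z/q^n\Z$ of order $q$, which is the additive group of the field $\F_q$, Proposition \ref{prop:products} shows that any $\Gamma$-circuit generator is automatically a $\Z/q\Z$-circuit generator. Applying Theorem \ref{thm:CycleCircuitField} with $\F = \F_q$ then tells us that $\mathcal B$ is an $\F_q$-cycle basis, after which Theorem \ref{thmCycleDet} yields $\det \mathcal B \not\equiv 0 \pmod q$.

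For the direction $(1) \Rightarrow (2)$, I would use a lifting argument exploiting that $d := \det \mathcal B$, being nonzero modulo $q$, is coprime to $q^n$ and hence a unit in $\Z/q^n\Z$. First, since $d \neq 0$ as an integer, Theorem \ref{thmCycleDet} with $\F = \Q$ shows that $\mathcal B$ is a $\Q$-cycle basis, so every circuit $C$ admits a decomposition $C = \sum_i a_i C_i$ in $\Cy(G, \Q)$ with $C_i \in \mathcal B$. Cramer's rule applied to $M(\mathcal B, \Q, T)$ (whose entries are integers) gives integer numerators, so $a_i = b_i/d$ with $b_i \in \Z$. Now let $\phi \in \Phi(G,\Z/q^n\Z)$ be balanced on $\mathcal B$, and choose an integer lift $\tilde\phi: \vec E \to \Z$; then $\tilde\phi(C_i) \equiv 0 \pmod{q^n}$ for each $C_i$. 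Bilinearity of the pairing $(\phi, C) \mapsto \phi(C)$ yields $d\, \tilde\phi(C) = \sum_i b_i \tilde\phi(C_i)$, whose right-hand side is divisible by $q^n$. Cancelling the unit $d$ modulo $q^n$ gives $\tilde\phi(C) \equiv 0 \pmod{q^n}$, i.e., $\phi(C) = 0$ in $\Z/q^n\Z$. Since $C$ was arbitrary, $\mathcal B$ is a $\Z/q^n\Z$-circuit generator.

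The main obstacle is conceptual rather than computational: Theorem \ref{thm:CycleCircuitField} is stated only for fields, but $\Z/q^n\Z$ is not a field for $n \geq 2$. The Cramer's rule maneuver above circumvents this by performing the decomposition in $\Cy(G,\Q)$ and then using the unit property of $d$ modulo $q^n$ to transport the conclusion back into $\Z/q^n\Z$. An alternative would be to generalize the cycle-basis framework directly to the local ring $\Z/q^n\Z$ (with residue field $\F_q$), but the lifting approach keeps us entirely within the machinery already established in the paper.
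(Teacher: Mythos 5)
Your proposal is correct and follows essentially the same route as the paper: the direction $(2)\Rightarrow(1)$ via Proposition \ref{prop:products}, Theorem \ref{thm:CycleCircuitField}, and Theorem \ref{thmCycleDet} is identical, and your Cramer's-rule lifting argument for $(1)\Rightarrow(2)$ is the same computation the paper performs (the paper writes $C=\sum_i \frac{\lambda_i}{\det\mathcal B}C_i$ and uses the canonical integer representatives $\eta\circ\phi$ in place of your lift $\tilde\phi$, then cancels $\det\mathcal B$ modulo $q^n$ exactly as you cancel the unit $d$).
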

\begin{proof}
The implication 2 $\Rightarrow$ 1 follows from Theorem~\ref{thmCycleDet}, Theorem~\ref{thm:CycleCircuitField} and Proposition~\ref{prop:products} since the additive group of $\F_q$ is a subgroup of $\Gamma$.

We next prove 1 $\Rightarrow$ 2.
We say $\mathcal B=\{C_1,\ldots,C_r\}$.
We canonically identify the elements of $\Gamma$ with $\{0,\ldots,q^n-1\} \subset \Q$ via a function $\eta : \Gamma \to \Q$. 
Since $\det \mathcal B \not \equiv 0 \mod q$, the matrix
$M(\mathcal B,\Q,T)$ is invertible for a spanning tree $T$ and every circuit $C \in \mathcal C(G,\Q)$ can uniquely be written as
\[
C = \frac{\lambda_1}{\det \mathcal B} C_1 + \ldots + \frac{\lambda_r}{\det \mathcal B}  C_r
\]
with integers $\lambda_1,\ldots,\lambda_r$.
Suppose $\mathcal B$ is not a $\Gamma$ circuit generator.
Then there exists $\phi \in \Phi(G,\Gamma)$ that is balanced on $\mathcal B$ but not on some $C \in \mathcal S(G)$.
Observe $\eta \circ \phi \in \Phi(G,\F)$ and $\phi$ is balanced on a circuit $C$ if and only if $(\eta \circ \phi) (C) \equiv 0 \mod q^n$.
Therefore we can write, $(\eta \circ \phi)(C_i)= c_i q^n$ for integers $c_i$ and $i=1,\ldots,c_r$. This implies
\[
(\eta \circ \phi)(C)\cdot \det \mathcal B = (\lambda_1 c_1 + \ldots +  \lambda_r c_r)q^n \in q^n \Z.
\]
Since $\det \mathcal B \not\equiv 0 \mod q$ and since $q$ is prime, we obtain
$(\eta \circ \phi)(C) \equiv 0 \mod q^n$ which is equivalent to balance of $\phi$ on $C$.
This contradicts the assumption that $\mathcal B$ is not a $\Gamma$ circuit generator.
This finishes the proof.
\end{proof}

Using the above theorem, we can fully characterize $\Gamma$ circuit bases for Abelian groups $\Gamma$

\begin{corollary}
Let $\Gamma$ be an Abelian group and
let  $\mathcal B = \{C_1, . . . , C_r\} \subset \mathcal S(G)$ be a cyclomatic circuit set of a graph $G$. T.f.a.e.:
\begin{enumerate}
\item
$g^{\det \mathcal B} \neq e_\Gamma$ for all $g \in \Gamma \setminus \{e_\Gamma\}$
\item $\mathcal B$ is a $\Gamma$-circuit generator.
\end{enumerate}
\end{corollary}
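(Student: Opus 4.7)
The plan is to reduce the abelian group $\Gamma$ to its cyclic components via the structure theorem for finitely generated abelian groups, and then apply Corollaries~\ref{cor:QZcircuitGenerators} and~\ref{cor:qnDetAbel}. The key observation is that $g^{\det \mathcal B} = e_\Gamma$ says either that $g$ has finite order dividing $\det \mathcal B$, or that $\det \mathcal B = 0$; thus hypothesis (1) is controlled entirely by the cyclic subgroups of $\Gamma$.

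For the direction (2~$\Rightarrow$~1), I would prove the contrapositive. Suppose some $g \in \Gamma \setminus \{e_\Gamma\}$ satisfies $g^{\det \mathcal B} = e_\Gamma$. If $g$ has infinite order, this forces $\det \mathcal B = 0$, so Corollary~\ref{cor:QZcircuitGenerators} gives that $\mathcal B$ is not a $\Z$-circuit generator; since $\langle g\rangle \cong \Z$ sits inside $\Gamma$, the subgroup clause of Proposition~\ref{prop:products} yields that $\mathcal B$ is not a $\Gamma$-circuit generator. If instead $g$ has finite order $n$, pick a prime $p \mid n$; then $p \mid \det \mathcal B$, and $g^{n/p}$ generates a $\Z/p\Z$ subgroup of $\Gamma$. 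Corollary~\ref{cor:qnDetAbel} (with $n=1$) says $\mathcal B$ is not a $\Z/p\Z$-circuit generator, and Proposition~\ref{prop:products} again transfers this up to $\Gamma$.

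For the direction (1~$\Rightarrow$~2), Proposition~\ref{prop:finitelyGenerated} reduces the problem to showing $\mathcal B$ is a $\Gamma'$-circuit generator for every finitely generated subgroup $\Gamma' \subset \Gamma$. By the structure theorem, $\Gamma' \cong \Z^a \times \prod_{i=1}^k \Z/p_i^{n_i}\Z$ for some primes $p_i$ and integers $a, n_i \geq 0$, so by Proposition~\ref{prop:products} it suffices to verify the circuit generator property on each cyclic factor. If $a \geq 1$, then $\Gamma$ contains an element of infinite order, and hypothesis (1) applied to this element forces $\det \mathcal B \neq 0$, so Corollary~\ref{cor:QZcircuitGenerators} handles the $\Z$-factor. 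For each factor $\Z/p_i^{n_i}\Z \subset \Gamma'$, the image of an order-$p_i$ element lies inside $\Gamma$, so hypothesis (1) forces $p_i \nmid \det \mathcal B$, whence Corollary~\ref{cor:qnDetAbel} gives the conclusion for that factor.

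The main obstacle I anticipate is a bookkeeping one, namely ensuring the structure-theorem reduction is compatible with the fact that hypothesis (1) is a statement about elements of the ambient $\Gamma$, not of $\Gamma'$. This is resolved by the observation that each cyclic factor appearing in $\Gamma'$ provides an explicit element of the corresponding order that already lives in $\Gamma$, since $\Gamma'$ is a subgroup. Once this is noted the rest is a straightforward assembly of the previously established corollaries.
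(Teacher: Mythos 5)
Your proposal is correct and follows essentially the same route as the paper: reduce to finitely generated subgroups via Proposition~\ref{prop:finitelyGenerated}, decompose by the structure theorem, split into cyclic factors with Proposition~\ref{prop:products}, and invoke Corollaries~\ref{cor:QZcircuitGenerators} and~\ref{cor:qnDetAbel}; for the converse, both arguments locate a cyclic subgroup $\langle g\rangle$ (infinite or of prime order) witnessing the failure. The only difference is cosmetic case organization, and you correctly handle the point the paper glosses over, namely that $\Z_{q_i}$ contains an \emph{element} of order $p_i$ living in $\Gamma$.
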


\begin{proof}
We start proving 2 $\Rightarrow$ 1.
First suppose $\det \mathcal B \neq 0$.
Suppose $g^{\det \mathcal B} = e_\Gamma$ for some $g \in \Gamma\setminus \{e_\Gamma\}$. Then, we can assume without obstruction that $g$ has prime order $q$ and $\det \mathcal B \equiv 0 \mod q$.
Theorem~\ref{thmCycleDet} and Theorem~\ref{thm:CycleCircuitField} imply that $\mathcal B$ is not a $\langle g\rangle$ circuit generator. Since $\langle g \rangle$ is a subgroup of $\Gamma$, Proposition~\ref{prop:products} implies that $\mathcal B$ is not a $\Gamma$ circuit generator.
Now suppose $\det \mathcal B = 0$. If there exists  $g \in \Gamma\setminus \{e_\Gamma\}$ with finite order, we can proceed as in the case above.
Otherwise, there exists an element $g \in \Gamma$ with infinite order and therefore $(\Z,+) \cong   \langle g\rangle \subset \Gamma$.
Corollary~\ref{cor:QZcircuitGenerators} yields that $\mathcal B$ is not a $\langle g \rangle$ circuit basis since $\det \mathcal B = 0$. Since $\langle g \rangle$ is a subgroup of $\Gamma$, Proposition~\ref{prop:products} also yields that $\mathcal B$ is not a $\Gamma$ circuit basis. This finishes the proof of  2 $\Rightarrow$ 1.

We now prove 1 $\Rightarrow 2$. 
W.l.o.g., $\Gamma$ is not the one element group and therefore, we can assume $\det \mathcal B \neq 0$.
Due to Proposition~\ref{prop:finitelyGenerated} we can assume without obstruction that $\Gamma$ is finitely generated.
Therefore, $\Gamma$ is isomorphic to $\Z^n \oplus  \Z_{q_1} \oplus \ldots \oplus \Z_{q_t}$ where $q_1,\ldots,q_t$ are powers of prime numbers and $\Z_q$ is the cyclic group with $q$ elements.
Due to Proposition~\ref{prop:products} it suffices to show that
$\mathcal B$ is a $\Z$ circuit generator and a $\Z_{q_i}$ circuit generator for $i=1,\ldots,r$. Corollary~\ref{cor:QZcircuitGenerators} and $\det \mathcal B \neq 0$ implies that $\mathcal B$ is a $\Z$ circuit generator.
Let $i \in \{1,\ldots r\}$. 
We know $q_i=p^n$ for some prime $p$ and we know $\Z_{q_i}$ has order $p$.
Therefore, assertion 1 of the theorem implies $\det \mathcal B \not \equiv 0 \mod p$. Now, Corollary~\ref{cor:qnDetAbel} applied to $\Z_{q_i}$ yields that $\mathcal B$ is a $\Z_{q_i}$ circuit generator. Since $i$ is arbitrary, this shows that $\mathcal B$ is a $\Gamma$ circuit generator.
This finishes the proof.
\end{proof}

\section{Cycle space and homology}\label{sec:BasisHomology}
We will be dealing with undirected graphs $G=(V,E)$, so when considering the homology groups, we will view $G$ as a directed graph in which each edge corresponds to two directed edges, one in each direction.

\begin{theorem}\label{thm:HomologyCycleSpace}
Let $\F$ be a field with characteristic not equal to 2, and let $\Cy(G,\F)$ denote the $\F$-cycle space of $G$.  Let $TS$ denote the subspace of $\Cy(G,\F)$ that is generated by all simple triangles and squares of $G$. Then
\[
H_1(G,\F) \cong \Cy(G,\F)/TS.
\]
\end{theorem}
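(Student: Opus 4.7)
The plan is to build the isomorphism by decomposing $\Omega_1$ along the antisymmetric/symmetric split and then enumerating a concrete spanning set of $\Omega_2$. Since $\F$ has characteristic $\neq 2$, write $\Omega_1 = A \oplus S$, where $A$ is spanned by $\{e_{xy} - e_{yx} : xy \in E\}$ and $S$ by $\{e_{xy} + e_{yx} : xy \in E\}$. For an antisymmetric family $a_{xy} = -a_{yx}$ one checks $\partial_1\bigl(\sum a_{xy} e_{xy}\bigr) = 2\sum_z \bigl(\sum_{v \sim z} a_{vz}\bigr) e_z$, so $A \cap \ker \partial_1$ is exactly the space of antisymmetric $\phi$ satisfying Kirchhoff's law, i.e., $A \cap \ker \partial_1 \cong \Cy(G,\F)$ via $\phi \mapsto \sum \phi(xy) e_{xy}$. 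Trivially $S \subseteq \ker \partial_1$, so $\ker \partial_1 = (A \cap \ker \partial_1) \oplus S$.

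Next I would establish $S \subseteq \im \partial_2$. For every edge $xy$ the 2-path $e_{xyx}$ is regular, allowed, and $\partial$-invariant because $\partial e_{xyx} = e_{yx} - e_{xx} + e_{xy} = e_{yx} + e_{xy}$ in $\mathcal R_1$; hence $e_{xyx} \in \Omega_2$ and its boundary generates $S$. Combining $S \subseteq \im \partial_2 \subseteq \ker \partial_1$ with the splitting above gives $\im \partial_2 = \pi_A(\im \partial_2) \oplus S$, where $\pi_A$ is the projection onto $A$. Therefore
\begin{equation*}
H_1(G,\F) = \ker \partial_1 / \im \partial_2 \;\cong\; (A \cap \ker \partial_1)/\pi_A(\im \partial_2) \;\cong\; \Cy(G,\F)/\pi_A(\im \partial_2).
\end{equation*}

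The remaining task is to identify $\pi_A(\im \partial_2)$ with $TS$. For this I would enumerate $\Omega_2$ modulo the loop contributions. Organizing $\mathcal A_2$ by endpoint pairs $(x,z)$, the identity $\partial e_{xyz} = e_{xy} + e_{yz} - e_{xz}$ shows that a linear combination $\sum_y a_y e_{xyz}$, with $y$ ranging over common neighbors of $x$ and $z$ and $x \neq z$, is $\partial$-invariant iff either $xz \in E$ (in which case each individual $e_{xyz}$ already lies in $\Omega_2$, giving a triangle) or $\sum_y a_y = 0$ (in which case a basis is given by the differences $e_{xyz} - e_{xwz}$). A direct calculation shows that the antisymmetric part of $\partial e_{xyz}$ corresponds under the identification $A \cap \ker \partial_1 \cong \Cy(G,\F)$ to the triangle cycle $xyz$, and the antisymmetric part of $\partial(e_{xyz} - e_{xwz})$ corresponds to the 4-cycle $xyzw$, while $\partial e_{xyx} \in S$ contributes nothing to $\pi_A$. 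Hence $\pi_A(\im \partial_2)$ is spanned precisely by the triangles and 4-cycles of $G$, that is, $\pi_A(\im \partial_2) = TS$.

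The main difficulty is the bookkeeping in the third step: giving a clean description of a spanning set of $\Omega_2$ by grouping 2-paths according to their endpoints, and verifying that the triangle/difference dichotomy above really exhausts $\Omega_2$. Once this is in hand, the identification $\pi_A(\im \partial_2) = TS$ is immediate, and the theorem follows from the displayed chain of isomorphisms.
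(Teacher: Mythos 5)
Your proposal is correct and follows essentially the same route as the paper: the symmetric/antisymmetric splitting of $\Omega_1$ (your $S$ and $A$ are the paper's $\Omega_+$ and $\Omega_-$), the identification $\ker\de_1\cong\Omega_+\oplus\Cy(G,\F)$, the observation that $\de e_{xyx}$ generates the symmetric part of the image, and the splitting of elements of $\Omega_2$ according to whether $xz\in E$ (triangles) or the $e_{xz}$ terms must cancel (squares) are exactly the paper's Lemmas \ref{lem:ker} and \ref{lem:im}. The bookkeeping step you flag as the main difficulty is handled in the paper precisely by that dichotomy, so no gap remains.
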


This section will be devoted to proving this theorem.

Recall that by definition,
\[
H_1(G,\F) = Ker~\de_1 / Im~\de_2
\]
where $\de_1$ denotes the boundary operator on 1-paths and $\de_2$ the boundary operator on 2-paths.

Observe that the space $\Omega_1$ of 1-paths can be naturally identified with the space of functions from the edge set to the field $\F$; that is
\[
\Omega_1 \cong \{\phi:\overrightarrow{E}\rightarrow\F\}.
\]
This space can naturally be decomposed: define \begin{align*}
\Omega_+ &= \{\phi\in \Omega_1 : \phi(xy) = \phi(yx) \text{ for all } x,y\}\\ \Omega_- &= \{\phi\in\Omega_1 : \phi(xy) = -\phi(yx) \text{ for all } x,y\}.\end{align*}  
Then it is clear that 
\[
\Omega_1 = \Omega_+ \oplus \Omega_-.
\]

\begin{lemma}\label{lem:ker}
$Ker~\de_1 \cong \Omega_+\oplus\Cy(G,\F).$\end{lemma}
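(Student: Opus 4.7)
The plan is to directly identify the kernel by decomposing any 1-path into its ``symmetric'' and ``antisymmetric'' parts and computing $\partial_1$ on each piece separately.

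First I would verify that the decomposition $\Omega_1=\Omega_+\oplus\Omega_-$ is genuine: since $\operatorname{char}\F\neq 2$, every $\phi\in\Omega_1$ admits the unique expression $\phi=\phi_++\phi_-$ with
\[
\phi_+(xy)=\tfrac{1}{2}\bigl(\phi(xy)+\phi(yx)\bigr),\qquad \phi_-(xy)=\tfrac{1}{2}\bigl(\phi(xy)-\phi(yx)\bigr),
\]
where the factor of $2$ is invertible. Note that $\Omega_-$ is exactly $\Phi(G,\F)$ once the group is taken to be the additive group of $\F$, since the condition $\phi(xy)=-\phi(yx)$ is precisely the gain-function compatibility condition in additive notation.

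Next I would compute the coefficient of each basis vertex $e_x$ in $\partial_1\phi$. Writing $\phi=\sum_{(xy)\in\vec E}\phi(xy)\,e_{xy}$ and applying $\partial_1 e_{xy}=e_y-e_x$, I get
\[
\partial_1\phi=\sum_{x\in V}\Bigl(\sum_{y:(yx)\in\vec E}\phi(yx)-\sum_{y:(xy)\in\vec E}\phi(xy)\Bigr)e_x=\sum_{x\in V}\Bigl(\sum_{y}\bigl(\phi(yx)-\phi(xy)\bigr)\Bigr)e_x.
\]
Substituting $\phi=\phi_++\phi_-$ and using $\phi_+(yx)=\phi_+(xy)$ and $\phi_-(yx)=-\phi_-(xy)$, the symmetric contribution cancels pointwise, and the antisymmetric part contributes $-2\sum_y\phi_-(xy)$. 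Thus the coefficient of $e_x$ in $\partial_1\phi$ equals $-2\sum_{y:(xy)\in\vec E}\phi_-(xy)$.

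From this, two conclusions follow immediately: $\Omega_+\subseteq\ker\partial_1$ automatically, and $\phi\in\ker\partial_1$ if and only if $\sum_{y}\phi_-(xy)=0$ for every $x\in V$ (using $\operatorname{char}\F\neq 2$ once more to divide by $-2$). The latter is, by definition, the condition that $\phi_-\in\Cy(G,\F)$. Hence $\ker\partial_1=\Omega_+\oplus\Cy(G,\F)$, with the direct sum inherited from the ambient decomposition of $\Omega_1$. There is no serious obstacle here; the only subtlety is the invocation of $\operatorname{char}\F\neq 2$, which is needed both to make the $\pm$-decomposition well-defined and to pass from $-2\sum_y\phi_-(xy)=0$ to the cycle condition $\sum_y\phi_-(xy)=0$.
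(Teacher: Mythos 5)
Your proof is correct and follows essentially the same route as the paper: compute the coefficient of each $e_x$ in $\partial_1\phi$, observe that the symmetric part contributes nothing so $\Omega_+\subseteq\ker\partial_1$, and identify the kernel condition on the antisymmetric part with the defining condition of $\Cy(G,\F)$. Your version is slightly more explicit than the paper's in writing out $\phi_\pm$ and flagging exactly where $\operatorname{char}\F\neq 2$ is used, but the argument is the same.
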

\begin{proof}
The action of $\de_1$ on $\Omega_1$ can be given as
\[
\de_1\phi = \sum_{x,y}\phi(xy)(e_y-e_x) = \sum_x\sum_y(\phi_{yx}-\phi_{xy})e_x.
\]
Therefore $\phi\in Ker~\de_1$ if and only if
\[
\sum_{y\sim x}(\phi(yx) -\phi(xy)) =0 \text{ for all } x.
\]
In terms of the direct sum decomposition above, this then yields
\begin{align*}
Ker~\de_1 = &\left\{\phi: \phi(xy) = \phi(yx) \text{ for all } x,y\right\}\\&\oplus \left\{\phi : \phi(xy) = -\phi(yx) \text{ for all } x,y \text{ and } \sum_{y\sim x}\phi(xy) =0 \text{ for all } x\right\}.
\end{align*}
The first term is clearly $\Omega_+$ and the second is, by definition, the cycle space $\Cy(G,\F)$.  This gives the lemma.
\end{proof}  

\begin{lemma}\label{lem:im}
$Im~\de_2 \cong \Omega_+\oplus TS$.  \end{lemma}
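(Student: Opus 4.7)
The plan is to decompose the problem using the direct sum $\Omega_1=\Omega_+\oplus\Omega_-$ from the proof of Lemma~\ref{lem:ker}. Since $TS\subseteq\Cy(G,\F)\hookrightarrow\Omega_-$, the sum $\Omega_+ + TS$ is automatically direct inside $\Omega_1$, so the content of the lemma is really the equality $Im~\de_2 = \Omega_+ + TS$. I will prove this by exhibiting a convenient spanning set of $\Omega_2$, computing its boundaries, and showing that each boundary's symmetric part lies in $\Omega_+$ while its antisymmetric part lies in $TS$.

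First I would identify spanning elements of $\Omega_2$. Since $\de e_{xyz}=e_{yz}-e_{xz}+e_{xy}$, the $\de$-invariance condition on a general $p=\sum a_{xyz}e_{xyz}$ amounts to the constraint $\sum_{q\in N_{xz}} a_{xqz}=0$ for every non-adjacent pair $x\neq z$, where $N_{xz}$ denotes the set of common neighbors. Organizing the solution space by the endpoint pair $(x,z)$ gives three natural classes that span $\Omega_2$: \textbf{(i)} degenerate paths $e_{xyx}$ for each edge $xy\in E$; \textbf{(ii)} triangle paths $e_{xyz}$ with $xyz$ a triangle of $G$; and \textbf{(iii)} square differences $e_{xyz}-e_{xwz}$ whenever $x,y,z,w$ is a $4$-cycle with $xz\notin E$.

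Next I would compute $\de_2$ on each class, using the hypothesis $\chi(\F)\neq 2$ to split the result into symmetric and antisymmetric parts. For (i), $\de e_{xyx} = e_{xy}+e_{yx}\in\Omega_+$, and as $xy$ ranges over $E$ these exhaust a basis of $\Omega_+$, giving $\Omega_+\subseteq Im~\de_2$. For (ii), a direct computation shows that $\de(e_{xyz}-e_{zyx})$ equals the oriented triangle $T=(e_{xy}-e_{yx})+(e_{yz}-e_{zy})+(e_{zx}-e_{xz})\in TS$, and the symmetric part of $\de e_{xyz}$ itself lies in $\Omega_+$. For (iii), the analogous combination $\de\!\left((e_{xyz}-e_{xwz})-(e_{zyx}-e_{zwx})\right)$ yields the oriented square $S\in TS$, again with symmetric part of $\de(e_{xyz}-e_{xwz})$ lying in $\Omega_+$.

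These two calculations close both inclusions simultaneously: the boundary of every spanning element of $\Omega_2$ has the required form, giving $Im~\de_2\subseteq \Omega_+ + TS$, while the basis of $\Omega_+$ together with the isolated $T$ and $S$ above gives $\Omega_+ + TS\subseteq Im~\de_2$. The main obstacle I anticipate is the bookkeeping for class (iii)---the space $\Omega_2$ is defined by a subtle $\de$-invariance cancellation, and one must verify that my spanning set really exhausts it; but once endpoints $(x,z)$ are fixed, the kernel of the single linear equation $\sum_{q\in N_{xz}}a_{xqz}=0$ is manifestly spanned by the differences $e_{xyz}-e_{xy'z}$, making the check routine.
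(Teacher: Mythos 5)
Your proof is correct and follows essentially the same route as the paper's: the same preimages $e_{xyx}$, $e_{xyz}-e_{zyx}$, and $e_{xyz}-e_{xwz}-e_{zyx}+e_{zwx}$ give $\Omega_+\oplus TS\subseteq Im~\de_2$, and the reverse inclusion rests on the same splitting according to whether the endpoints $x,z$ are adjacent. Your explicit spanning set of $\Omega_2$ (organized by endpoint pairs, with the single relation $\sum_y a_{xyz}=0$ for non-adjacent $x\neq z$) simply makes fully precise the cancellation argument that the paper states more informally at the end of its proof.
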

\begin{proof}
First, any element of $\Omega_+$ can be written
\[
\sum_{x,y}\phi(xy)(e_{xy}+e_{yx}) = \de_2\left(\sum_{x,y}\phi(xy)e_{xyx}\right),
\]
so $\Omega_+\subset Im~\de_2$.  It remains to show that the portion of $Im~\de_2$ that lies in $\Omega_-$ is equal to $TS$.  That is, we must show that any element from $\Cy(G,\F)$ is in $Im~\de_2$ if and only if it is in the space spanned by triangles and squares.

Suppose $xyz$ is a triangle of $G$.  Consider first any triangle, that is, some $\phi$ such that $\phi(xy) = \phi(yz) = \phi(zx)=-\phi(yx)=-\phi(zy)=-\phi(xz)$ and is 0 on all other edges.  Note that
\begin{align*}
\de_2(\phi(xy)(e_{xyz}-e_{zyx})& = \phi(xy)e_{yz}-\phi(xy)e_{xz}+\phi(xy)e_{xy} - \phi(xy)e_{yx} + \phi_{xy}e_{zx} - \phi(xy)e_{yx}\\
&= \phi(xy)e_{xy}+\phi(yz)e_{yz}+\phi(zx)e_{zx}+\phi(yx)e_{yx}+\phi(zy)e_{zy}+\phi(xz)e_{xz}
\end{align*}
which is the triangle $\phi$. So any triangle is contained in $Im~\de_2$.

Similarly if $\phi$ is an square $xyzw$ of $G$, i.e. $\phi(xy) = \phi(yz) = \phi(zw) = \phi(wx) = -\phi(yx) = -\phi(zy) = -\phi(wz) = -\phi(xw)$ and is 0 elsewhere, then in a similar way, it can be verified that
\begin{align*}
\de_2(\phi(xy)(e_{xyz} - e_{xwz}-e_{zyx}+e_{zwx})) &=\phi.
\end{align*}
Therefore any square is in $Im~\de_2$ as well.  It follows that the space $TS\subset Im~\de_2$.

Conversely, we must show $Im~\de_2\subset \Omega_+\oplus TS$.  We already know $Im~\de_2\subset Ker~\de_1 = \Omega_+\oplus\Cy(G,\F)$, so we will be done if we can show that any $\phi\in Im~\de_2 \cap \Cy(G,\F)$ belongs to $TS$.  Since $\phi\in Im~\de_2$ we can write \begin{align*}
\phi &= \de_2\left(\sum_{xyz}a_{xyz}e_{xyz}\right)\\
&= \sum_{xyz}a_{xyz}(e_{yz}-e_{xz}+e_{xy}),
\end{align*}
where the sum is taken over $\de$-invariant allowed paths $xyz$ of $G$.  Since $\Omega_2$ consists only of $\de$-invariant allowed elements, $a_{xyz}$ is non-zero only for allowed paths $xyz$, implying that $xy$ and $yz$ are edges of $G$, and that either $xz$ is an edge of $G$, or else the $e_{xz}$ term cancels in the sum.  We will therefore split the above sum into two parts,
\[
\sum_{\substack{xyz\\xz\in E(G)}}a_{xyz}(e_{yz}-e_{xz}+e_{xy}) + \sum_{\substack{xyz\\xz\not\in E(G)}}a_{xyz}(e_{yz}-e_{xz}+e_{xy}).
\]
Since we are assuming $\phi \in \Cy(G,\F)$ (in particular, $\phi(xz) = -\phi(zx)$) then it is clear that the first term above is a linear combination of triangles.  

For the second term, since it is allowed, any $e_{xz}$ term must cancel.  Thus, for any $xyz$ for which $a_{xyz}$ is non-zero in the second sum, there must be some other allowed 2-path in which $e_{xz}$ shows up as a term.  Namely, there exists $w\neq y$ such that $xwz$ is allowed in $G$, and the coefficient $a_{xwz} = -a_{xyz}$.  It is clear then that the second sum is a linear combination of squares of $G$.  Thus we have shown that $Im~\de_2\cap \Cy(G,\F) \subset TS$, and we have shown the lemma.
\end{proof}

Lemma \ref{lem:ker} and Lemma \ref{lem:im} together immediately give the proof of Theorem \ref{thm:HomologyCycleSpace}.

With this, we are ready to prove Theorem \ref{hom_van}, which we restate as the following.

\begin{corollary}\label{cor:HomVanish}
If $G$ is a finite graph satisfying $CD(K,\infty)$ for some $K>0$ and if $\F$ is a field with characteristic 0, then 
\[
H_1(G,\F) = 0.
\]
\end{corollary}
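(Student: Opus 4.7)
The plan is to combine Theorem~\ref{thm:HomologyCycleSpace} with Theorem~\ref{thm:CurvCover}, using the ordinary derived covering construction as the bridge. Since $\F$ has characteristic $0 \ne 2$, Theorem~\ref{thm:HomologyCycleSpace} gives $H_1(G,\F) \cong \Cy(G,\F)/TS$, so it suffices to prove $\Cy(G,\F) = TS$. Via the natural pairing between $\Phi(G,\F)$ (with group $(\F,+)$) and $\Cy(G,\F)$, together with the fact that $\Cy(G,\F)$ is spanned by its circuits, this equality is equivalent to the assertion that every $\phi \in \Phi(G,\F)$ that is balanced on every triangle and square of $G$ is in fact balanced on every circuit of $G$.

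Suppose, toward a contradiction, that some $\phi \in \Phi(G,\F)$ is balanced on all 3- and 4-cycles but fails to be balanced on some circuit $C$, so that $\phi(C) \ne 0$ in $\F$. This is where the characteristic-$0$ hypothesis enters crucially: the additive group $(\F,+)$ is torsion-free, so $\phi(C)$ has infinite order, i.e.\ $o_\phi(C) = \infty$. I would then form the ordinary derived cover $(G^\phi, \Psi)$. By Corollary~\ref{cor:cover_pres_balance}, this cover preserves all 3- and 4-cycles of $G$, and by Lemma~\ref{lem:derived_lift_cycle}, $\Psi^{-1}(C)$ contains an infinite path, so some connected component $\widetilde G_0$ of $G^\phi$ has infinitely many vertices. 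Since $G$ is connected, restricting $\Psi$ to $\widetilde G_0$ still yields a covering of $G$, giving an infinite covering of $G$ that preserves all 3- and 4-cycles.

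This contradicts Theorem~\ref{thm:CurvCover}, which under $CD(K,\infty)$ with $K > 0$ forbids any infinite covering that preserves 3- and 4-cycles. Hence no such $\phi$ exists, giving $\Cy(G,\F) = TS$ and therefore $H_1(G,\F) = 0$. The only delicate point is that Theorem~\ref{thm:CurvCover} rules out only \emph{infinite} covers rather than all non-trivial ones, so the characteristic-$0$ hypothesis is essential for forcing the derived cover to have infinitely many sheets on a single component; with torsion in $(\F,+)$ the corresponding derived cover could be finite and this line of argument would break down.
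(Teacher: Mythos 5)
Your proposal is correct and follows essentially the same route as the paper's proof: reduce via Theorem~\ref{thm:HomologyCycleSpace} to showing the triangles and squares generate the cycle space, translate a failure of this into a gain function $\phi$ over $(\F,+)$ balanced on all 3- and 4-cycles but not on some circuit $C$, use characteristic $0$ to get $o_\phi(C)=\infty$, and then invoke Lemma~\ref{lem:derived_lift_cycle}, Corollary~\ref{cor:cover_pres_balance}, and Theorem~\ref{thm:CurvCover} to derive a contradiction from the resulting infinite $\Bu$-preserving derived cover. Your added remark about passing to a connected component is a minor tidying of a point the paper leaves implicit.
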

\begin{proof}
Suppose by way of contradiction that $H_1(G,\F)$ is non-trivial.  Let $\Bu$ denote the collection of triangles and squares in $G$. Then by Theorem \ref{thm:HomologyCycleSpace}, the cycle space of $G$ is not generated by $\Bu$, and so by Theorem \ref{thm:CycleCircuitField}, $\Bu$ is not a $\Gamma$-circuit generator where $\Gamma$ is the additive group of $\F$.  Thus there is some gain function $\phi:\vec E\rightarrow \Gamma$ that is balanced on all triangles and squares, but is unbalanced on some other cycle, call it $C$.  Then we can construct the ordinary derived covering $G^\phi$ with projection $\Psi$ of Section \ref{sec:CoverCircuit}.  Since $\phi$ is not balanced on $C$ and since $\F$ has characteristic 0, then $o(\phi(C))=\infty$.  By Lemma \ref{lem:derived_lift_cycle}, $\Psi^{-1}(C)$ contains an infinite path, and by Corollary \ref{cor:cover_pres_balance}, $\Psi$ preserves $\Bu$.  But then $\Psi$ is an infinite covering of $G$ preserving all triangles and squares, so by Theorem \ref{thm:CurvCover}, $G$ cannot satisfy $CD(K,\infty)$ at every vertex.  This implies the result.

\Hidden{
Let $\mathcal B$ be the set of 3- and 4-cycles of $G$.
By Theorem \ref{thm:CurvCover}, there is no infinite cover of $G$ preserving preserving $\mathcal B$.  Then take $\Gamma$ to be the additive group of the field $\F$.  By Theorem \ref{thm:CoverWeakGen}, the set $\mathcal B$ is a weak $\Gamma$-circuit generator of $G$.  Since $\F$ has characteristic 0, no non-trivial element has finite order, so in fact $\mathcal B$ is a strong $\Gamma$-circuit generator of $G$.  Then by Theorem \ref{thm:CycleCircuitField}, $\mathcal B$ contains a $\F$-cycle basis.  Thus the $\F$ cycle space of $G$ has a basis consisting entirely of 3- and 4-cycles.  Then Theorem \ref{thm:HomologyCycleSpace} implies the result.
}
\end{proof}

Of course, the converse of Corollary \ref{cor:HomVanish} does not hold.  Indeed, it is well-known that in trees other than paths or the star on 4 vertices, there will typically be vertices with negative curvature (see, for instance \cite{cushing2016bakry}).  However, all trees have trivial first homology \cite{Grigoryan2}.

It is natural to ask if the hypotheses of Corollary \ref{cor:HomVanish} can be weakened, but still obtain trivial first homology.  Consider however the graph pictured below.
\tikzstyle{every node}=[circle,fill=black,inner sep=1pt]
\begin{center}
\begin{tikzpicture}
\draw \foreach \x in {18,90,...,306}
{
(\x:1) -- (\x+72:1) node{}
(\x:2) -- (\x+72:2) node{}
(\x:1) -- (\x:2) node{}
};
\draw (18:1)--(162:1);
\end{tikzpicture}
\end{center}
Computation shows that this graph has non-negative curvature at every vertex, and strictly positive curvature at some vertices.  However $dim~H_1(G,\F)=1$, since the outer 5-cycle is not generated by any 3- or 4-cycles.  This shows that we cannot weaken the hypothesis to simply non-negative curvature, even if there are some vertices with strictly positive curvature. 

Another result due to Bochner \cite{BochnerYano} is that a Riemannian manifold of non-negative curvature has finite-dimensional first homology group. We conjecture that this holds for graphs as well.

\subsection{A remark on clique homology}
Another commonly used notion of homology in graph theory is the \emph{clique homology} coming from the \emph{clique complex}, or \emph{flag complex} of the graph.  In this theory, the chain complex is
\[
\cdots C_n\rightarrow C_{n-1}\rightarrow\cdots\rightarrow C_1\rightarrow C_0\rightarrow0
\]
where $C_n$ is the space of all formal $\F$-linear combinations of $n$-cliques of the graph $G$. (Hence it is still the case that $C_1$ is all formal linear combinations of edges, and $C_0$ all formal linear combinations of vertices.)  The boundary map $\de$ of a clique is the sum of all its ``faces," viewing the graph as a cell complex with an $n$-cell filling each $n$-clique.  Then the clique homology groups are defined in the same way,
\[
H_n^{\text{clique}}(G,\F) = Ker~\de|_{C_n}/Im~\de|_{C_{n+1}}.
\]

Using techniques very similar to those used to prove Theorem \ref{thm:HomologyCycleSpace}, it is possible to prove an analogous theorem for clique homology.

\begin{theorem}\label{thm:CliqueHomologyCycleSpace}
Let $\F$ be a field with characteristic not equal to 2, and let $\Cy(G,\F)$ denote the $\F$-cycle space of $G$.  Let $T$ denote the subspace of $\Cy(G,\F)$ that is generated by all simple triangles of $G$. Then
\[
H_1^{\text{clique}}(G,\F) \cong \Cy(G,\F)/T.
\]
\end{theorem}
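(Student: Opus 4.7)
The plan is to follow the blueprint of Theorem \ref{thm:HomologyCycleSpace}: compute $Ker~\de_1$ and $Im~\de_2$ separately and take the quotient. The key structural fact I would exploit is that the clique $2$-chains are strictly simpler than the path $2$-chains --- they are spanned only by oriented triangles, with no analogue of the $\de$-invariant combinations $e_{xyz}-e_{xwz}$ that generated squares in the path homology case.

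First I would fix orientations on each edge and each $3$-clique of $G$ and identify $C_1$ with the antisymmetric space $\Omega_-$ of functions on directed edges (so $e_{xy}=-e_{yx}$), and $C_2$ with the span of oriented triangles $[x,y,z]$. The boundary $\de_1$ then acts exactly as in the path complex restricted to $\Omega_-$, so the argument of Lemma \ref{lem:ker} adapts immediately: $Ker~\de_1$ consists of antisymmetric $\phi$ with $\sum_{y \sim x}\phi(xy)=0$ for every $x$, which is exactly the $\Cy(G,\F)$ summand produced in Lemma \ref{lem:ker}. The difference from Lemma \ref{lem:ker} is that no $\Omega_+$ summand appears here: $C_1$ is antisymmetric by construction, and there is no clique-homology analogue of the $2$-chain $e_{xyx}$ whose boundary populated $\Omega_+$ in the path setting.

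Next I would compute $Im~\de_2$. For each oriented triangle $[x,y,z]$, $\de_2[x,y,z] = e_{yz} - e_{xz} + e_{xy}$ is precisely the triangle cycle on $xyz$, giving $Im~\de_2 \subseteq T$; the reverse inclusion is immediate since every triangle cycle arises as such a boundary. This is the step at which the clique proof genuinely diverges from Lemma \ref{lem:im}: there is no $2$-cell filling a square in the clique complex, so combinations like $e_{xyz}-e_{xwz}$ simply do not exist in $C_2$, and the square contribution that appeared in $TS$ never enters the image. Taking the quotient then yields
\[
H_1^{\text{clique}}(G,\F) \;=\; Ker~\de_1 \big/ Im~\de_2 \;\cong\; \Cy(G,\F)/T.
\]

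I do not expect any serious obstruction. The delicate step of the path homology proof --- identifying which non-triangle elements appear in $Im~\de_2$ and verifying that they come from squares --- is bypassed here, precisely because the clique complex rules out those $2$-chains by definition. The only care needed is in the orientation bookkeeping for triangles to make sure the signs in $\de_2[x,y,z]$ line up with the chosen orientations of the three boundary edges, which is entirely standard.
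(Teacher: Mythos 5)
Your proof is correct and follows exactly the route the paper intends: the paper gives no explicit proof of Theorem~\ref{thm:CliqueHomologyCycleSpace}, stating only that it follows by the same techniques as Theorem~\ref{thm:HomologyCycleSpace}, and your adaptation of Lemmas~\ref{lem:ker} and~\ref{lem:im} (with $C_1\cong\Omega_-$ so no $\Omega_+$ summand, and $Im~\de_2$ reducing to $T$ because the clique complex has no $2$-chains analogous to $e_{xyz}-e_{xwz}$) is precisely that argument. The observations about where the clique case simplifies relative to the path case are accurate, and the characteristic-$\neq 2$ hypothesis is used exactly where expected, in identifying $Ker~\de_1|_{\Omega_-}$ with $\Cy(G,\F)$.
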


Hence for both the path and clique homology theories, the first homology group ``counts" cycles of the graph, but there are certain types of cycles ignored depending on the theory; clique homology does not see triangles, and path homology sees neither triangles nor squares.  

Observe in particular that the homology vanishing theorem for path homology, Corollary \ref{cor:HomVanish}, does not hold for the clique homology (a simple 4-cycle being a counterexample).  We take this as further evidence that the path homology is the more appropriate homology theory for graph theory.  

\section{Homotopy and fundamental groups for graphs}\label{sec:homotopy}
In this section we examine the fundamental group of a graph as defined by Grigoryan et. al. \cite{Grigoryan_homotopy}, and connect this to the group for the canonical gain graph defined previously.

Recall from Theorem \ref{thm:homotopy}, the fundamental group of a graph can be described as the group of equivalence classes of loops, where loops are equivalent if their corresponding words differ by a finite sequence of application of some rules.  These rules amount to triangles, squares, and trees being contractible.  We will generalize this notion.

\begin{definition}\label{def:pi}
Let $\Bu$ be a collection of circuits of a graph $G$. Define $\pi_1(G,\Bu)$ to be the group of equivalence classes of loops in $G$ where two loops $\phi:I_n\to G$ and $\psi:I_m\to G$ are considered equivalent if the word $\theta_\psi$ can be obtained from $\theta_\phi$ via a finite sequence of the following transformations and their inverses:
\begin{enumerate}
\item $...av_1...v_ib...\mapsto ...aw_1...w_jb...$ where $(a,v_1,...,v_i,b,w_j,...,w_1)$ is a circuit from $\Bu$;\label{rule:cycle}
\item $...aba...\mapsto ...a...$ if $a\sim b$;\label{rule:aba}
\item $...aa...\mapsto ...a....$\label{rule:aa}
\end{enumerate}
\end{definition}

Observe then that the fundamental group $\pi_1(G)$ is precisely $\pi_1(G,\Bu)$ where $\Bu$ is the collection of all triangles and squares in $G$.

Recall that, in Section \ref{sec:prelim}, we defined for a family of circuits $\Bu$ the canonical gain group $\Gamma(G,\Bu)$.  Now, for a fixed spanning tree $T$ of a graph $G$, we define the group $\Gamma(G,T,\Bu)$ via the presentation
\[
\Gamma(G,T,\Bu) =  \langle \vec E \mid T, \Bu \rangle.
\]

\begin{theorem}\label{thm:pi}
For any spanning tree $T$ of $G$, we have 
\[
\pi_1(G,\Bu) \cong \Gamma(G,T,\Bu).
\]
In particular, the group $\Gamma(G,T,\Bu)$ is independent of the spanning tree $T$ up to isomorphism.
\end{theorem}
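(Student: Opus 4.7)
The plan is to construct mutually inverse group homomorphisms between $\pi_1(G,\Bu)$ and $\Gamma(G,T,\Bu)$; once this is done, the tree-independence of $\Gamma(G,T,\Bu)$ up to isomorphism is immediate, since $\pi_1(G,\Bu)$ is defined without reference to $T$. First, I would define $\Psi : \pi_1(G,\Bu) \to \Gamma(G,T,\Bu)$ on a loop with word $v_* = v_0, v_1, \ldots, v_n = v_*$ by sending it to the product $(v_0 v_1)(v_1 v_2) \cdots (v_{n-1}v_n)$ in $\Gamma(G,T,\Bu)$, where any factor $(v_i v_{i+1})$ with $v_i = v_{i+1}$ is interpreted as the identity. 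To check that $\Psi$ descends to the equivalence relation of Definition~\ref{def:pi}, I would verify that each of the three rewriting rules leaves the product invariant: rule~(\ref{rule:cycle}) follows directly from the circuit relations in $\Gamma(G,T,\Bu)$; rule~(\ref{rule:aba}) follows from $(ab)(ba) = e$, which is the gain-graph convention $(ba) = (ab)^{-1}$; and rule~(\ref{rule:aa}) follows from the trivial-factor convention. Since concatenation of loops corresponds to multiplication of products, $\Psi$ is then a group homomorphism.

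Next, I would define a candidate inverse $\Phi : \Gamma(G,T,\Bu) \to \pi_1(G,\Bu)$ on generators by sending $(xy) \in \vec E$ to the loop $\ell_{xy}$ that follows the unique $T$-path from $v_*$ to $x$, crosses the edge $xy$, and then returns along the $T$-path from $y$ to $v_*$. To establish well-definedness, I must check that each defining relation of $\Gamma(G,T,\Bu)$ maps to a loop equivalent to the constant loop at $v_*$ in $\pi_1(G,\Bu)$. The triviality of $(xy)$ for $xy \in T$ and the gain-graph relation $(xy)(yx) = e$ both reduce to a single observation, proved by iterated application of rule~(\ref{rule:aba}), that any ``out-and-back'' traversal of a portion of $T$ is contractible in $\pi_1(G,\Bu)$. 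The circuit relation for $C = (x_1,\ldots,x_n) \in \Bu$ is the more substantial check: after cancelling intermediate tree out-and-back portions, the concatenation $\ell_{x_1 x_2} \cdots \ell_{x_n x_1}$ reduces to a loop that follows $T$ from $v_*$ to $x_1$, traverses $C$ once, and then follows $T$ back to $v_*$; a single application of rule~(\ref{rule:cycle}) replaces the segment $x_1 x_2$ with the reverse path around $C$, producing a nested word that collapses under iterated applications of rules~(\ref{rule:aba}) and~(\ref{rule:aa}), after which a final tree-contraction leaves the trivial loop.

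It then remains to verify that $\Psi \circ \Phi$ and $\Phi \circ \Psi$ are the identity maps. The first is immediate on generators, since all tree edges are trivial in $\Gamma(G,T,\Bu)$ and hence $\Psi(\ell_{xy}) = (xy)$. For the second, given a loop $\phi$ with word $v_0 v_1 \cdots v_n$, the concatenation $\ell_{v_0 v_1} \ell_{v_1 v_2} \cdots \ell_{v_{n-1} v_n}$ is equivalent to $\phi$ because between consecutive spokes the tree portion from $v_{i+1}$ to $v_*$ and back cancels by the tree-contraction observation, leaving exactly the word of $\phi$.

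The main technical obstacle I anticipate is the verification that $\Phi$ respects the circuit relation: combining rule~(\ref{rule:cycle}) with the iterative collapsing of a nested word and the accompanying tree out-and-back portions is elementary but requires careful bookkeeping. Once all four verifications (well-definedness of $\Psi$, well-definedness of $\Phi$, and both compositions being identities) are complete, the theorem follows, and the tree-independence of $\Gamma(G,T,\Bu)$ up to isomorphism is a formal consequence.
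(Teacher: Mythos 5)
Your proposal is correct and follows essentially the same route as the paper: the paper's proof defines the single map sending a word in $\vec E$ to the loop obtained by interpolating tree paths through $T$ (your $\Phi$), proves it surjective, and identifies its kernel with the normal closure of $\Bu$ via the first isomorphism theorem, whereas you replace that kernel computation with the explicit inverse $\Psi$ and the check that $\Psi$ respects the three rewriting rules of Definition~\ref{def:pi}. The substantive verifications --- tree out-and-back segments contract by rule~\ref{rule:aba}, circuit relations correspond exactly to rule~\ref{rule:cycle}, and every loop is realized --- are the same in both arguments.
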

\begin{proof}
Let $\Gamma := \langle \vec E \mid T \rangle$ where we identify the edge $xy$ with $(yx)^{-1}$.  Define a map $\phi:\Gamma\to \pi_1(G,\Bu)$ as follows.  First, given $g \in \Gamma$, choose the shortest representative word $e_1...e_k$ without spanning tree edges, then associate to this word the loop given by starting at the base point $v_*$, and taking the unique path from $v_*$ through $T$ to the starting point of $e_1$, then go to the endpoint of $e_1$, and take the unique path in $T$ from that vertex, to the starting vertex of $e_2$, continue in this manner until we reach the endpoint of $e_k$, and take the unique path in $T$ from there to $v_*$. Then $\phi(e_1...e_k)$ is the equivalence class of this loop in $\pi_1(G,\Bu)$.

Recall that $\Gamma(G,T,\Bu) = \langle \vec E \mid T, \Bu\rangle$,so by definition of a group presentation, is $\Gamma/\langle \Bu \rangle$ where $\langle \Bu\rangle$ is the normal closure of the set $\Bu$.  So what we need to show is that $\phi$ is a well-defined surjective group homomorphism whose kernel is the normal closure of $\Bu$.  Then we will be done by the first isomorphism theorem for groups.

Clearly, $\phi$ is  well-defined.
    To see that it is a homomorphism, consider $\phi(e_1...e_j)\phi(e_{j+1}...e_k)$.  Since $T$ is a spanning tree, the path in $T$ from the endpoint of $e_j$ to $v_*$, and from $v_*$ to the start of $e_{j+1}$ is equivalent to the path in $T$ from the end of $e_j$ to the start of $e_{j+1}$, possibly via application of the $...aba...\mapsto ...a...$ rule of the definition of $\pi_1(G,\Bu)$.  Thus $\phi(e_1...e_j)\phi(e_{j+1}...e_k) = \phi(e_1...e_k)$ as desired.  

To show that $\phi$ is surjective, suppose the sequence $v_*,v_1,...,v_k,v_*$ is the word of a loop in $G$. Then either $v_i=v_{i+1}$ or $(v_i,v_{i+1})$ is an edge of $G$.  If $v_i=v_{i+1}$, we can get rid of one of these via the $...aa...\mapsto ...a...$ rule. 
Due to the $...aba... \mapsto ...a...$, we can assume without obstruction that the loop is non-backtracking, i.e., $v_{i+2}\neq v_i$ for all $i$.
We consider the loop $v_*w_1...w_nv_*=\phi((v_*,v_1)(v_1,v_2)...(v_{k-1},v_k)(v_k,v_*))$.
Since between every two vertices within a spanning tree there exists a unique non-backtracking path connecting both, we infer $w_i=v_i$ and $n=k$ meaning that $\phi$ maps $g$ to the loop $v_*v_1...v_kv_*$. This proves surjectivity of $\phi$.

Now, to see that $\Bu\subset Ker(\phi)$, suppose that if $e_1,...,e_k = (u_1,u_2),...,(u_k,u_1)$ are edges of a circuit from $\Bu$ with vertices $u_1,...,u_k$.  Then $\phi (e_1...e_k)$ is a loop whose word is has the form $v_*...v_n u_1u_2...u_ku_1v_n...v_*$, and by rules \ref{rule:cycle} and \ref{rule:aba} of Definition \ref{def:pi}, so this word belongs to $Ker(\phi)$.

Finally, we wish to show that $Ker(\phi)$ is a subset of the normal closure of $\Bu$.  Let $w=e_1...e_k\in Ker(\phi)$. Then $\phi(w)$ is equivalent to the trivial loop, so this means that the word of the the loop $\phi(w)$ can be obtained form the trivial word $v_*$ from a sequence transformations using the rules in Definition \ref{def:pi}.  
Rules \ref{rule:aba} and \ref{rule:aa} can be ignored since these will not arise as images of words in $\Gamma$ (they are equivalent to words in which these do not occur; recall that $xy$ and $(yx)^{-1}$ are identified). Then proceeding by induction on the applications of rule \ref{rule:cycle}, this rule corresponds precisely to inserting edges of a circuit of $\Bu$ into the word $w$.  So $w$ is in the normal closure of $\Bu$. This implies the result.
\end{proof}

\begin{corollary}
Let $\Bu$ be the collection of triangles and squares of $G$, and $T$ any spanning tree of $G$.  Then 
\[
\pi_1(G)\cong \Gamma(G,T,\Bu)
\]
where $\pi_1(G)$ denotes the fundamental group of $G$ from \cite{Grigoryan_homotopy}.
\end{corollary}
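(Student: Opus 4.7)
The plan is to show that this corollary is an immediate consequence of Theorem~\ref{thm:pi} once we identify $\pi_1(G)$ with $\pi_1(G,\Bu)$ for the specific choice of $\Bu$ being the triangles and squares of $G$. The main (and essentially only) step is therefore verifying that the equivalence relation on loops described by Theorem~\ref{thm:homotopy} (the characterization of C-homotopy from Grigoryan et al.) coincides with the equivalence relation in Definition~\ref{def:pi} applied to this particular $\Bu$.

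Concretely, I would match the five transformation rules of Theorem~\ref{thm:homotopy} against the three rules of Definition~\ref{def:pi} as follows. Rule (\ref{rule:aba}) of Definition~\ref{def:pi} is identical to rule (4) of Theorem~\ref{thm:homotopy}, and rule (\ref{rule:aa}) is identical to rule (5). For rule (\ref{rule:cycle}) applied to a circuit $C=(a,v_1,\dots,v_i,b,w_j,\dots,w_1)\in\Bu$ of length $i+j+2$, the length is $3$ when $\{i,j\}=\{0,1\}$ (so $C$ is a triangle), giving the transformation $\dots abc\dots\mapsto\dots ac\dots$ (or its inverse) which is rule (1) of Theorem~\ref{thm:homotopy}. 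When $C$ has length $4$, we have $\{i,j\}\in\{\{1,1\},\{0,2\}\}$: the first case produces $\dots avb\dots\mapsto\dots awb\dots$ for a square $(a,v,b,w)$, which is rule (2); the second case produces $\dots avub\dots\mapsto\dots ab\dots$ for a square $(a,v,u,b)$, which is rule (3).

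Once this dictionary is established, a loop is C-homotopic to another in the sense of \cite{Grigoryan_homotopy} if and only if their words are equivalent under the rules of Definition~\ref{def:pi} with $\Bu$ the set of triangles and squares. Hence $\pi_1(G)\cong\pi_1(G,\Bu)$ as groups (the group operations, concatenation of loops and the identity/inverse loops, are defined identically in both settings). Applying Theorem~\ref{thm:pi} to this $\Bu$ and any fixed spanning tree $T$ yields the desired isomorphism $\pi_1(G)\cong\Gamma(G,T,\Bu)$.

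There is no substantive obstacle here beyond the bookkeeping above; the whole content of the corollary is that the ad hoc list of contraction rules in the Grigoryan--Lin--Muranov--Yau definition of $\pi_1(G)$ can be packaged uniformly as ``contract each circuit in $\Bu$,'' and Theorem~\ref{thm:pi} has already absorbed the real work of translating that into a presentation involving the spanning tree $T$ and the circuits $\Bu$.
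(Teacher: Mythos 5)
Your proof is correct and follows exactly the route the paper intends: the paper asserts (immediately after Definition~\ref{def:pi}) that $\pi_1(G)=\pi_1(G,\Bu)$ for $\Bu$ the triangles and squares, via the same matching of the rules of Theorem~\ref{thm:homotopy} with those of Definition~\ref{def:pi} that you spell out, and then the corollary is an immediate application of Theorem~\ref{thm:pi}. Your write-up just makes the rule dictionary explicit where the paper leaves it as an observation.
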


\Hidden{
\begin{lemma}\label{lem:grouptree}
The group $\Gamma(G,\Bu,T)$ does not depend on the choice of spanning tree $T$ up to isomorphism.
\end{lemma}
\begin{proof}
Let $T, T'$ be two spanning trees of $G$, and consider a bijection $\sigma:\vec E\rightarrow\vec E$ that fixes edges of $G\setminus(T\cup T')$ and maps edges of $T$ to edges of $T'$ bijectively.  Let $\Gamma$ be the free group on $\vec E$ with the condition that edge $xy$ is identified with the inverse of $yx$. Define $\psi:\Gamma\rightarrow \Gamma(\vec E,\Bu,T')$ by $\psi(e_1...e_k) = [f(e_1)...f(e_k)]$, i.e., $\psi$ maps a word in $\Gamma$ to an equivalence class according to this bijection.  Clearly this is a group homomorphism, with $Ker~\psi$ generated by $\Bu$ and $T$.  Then the first isomorphism theorem for groups gives the lemma.
\end{proof}

We will thus define $\widetilde\Gamma(G,\Bu):= \Gamma(G,\Bu,T)$ for some arbitrary spanning tree $T$.

Denote by $\pi_1(G)$ the fundamental group as defined in \cite{Grigoryan_homotopy}.

\begin{theorem}\label{thm:fund_group}
Let $\Bu$ be the collection of all triangles and squares in $G$.  Then
$\widetilde\Gamma(G,\Bu) \cong \pi_1(G).$
\end{theorem}
\begin{proof}
Let $\Gamma$ be the group defined above, and define $\phi:\Gamma\rightarrow \pi_1(G)$ as follows.  Fix a spanning tree $T$ of $G$.  Given a word $e_1...e_k$ in $\Gamma$, define $\phi(e_1...e_k)$ to be the loop given by starting at the base point $v_*$, and taking the unique path from $v_*$ through $T$ to the starting point of $e_1$, then go to the endpoint of $e_1$, and take the unique path in $T$ from that vertes, to the starting vertex of $e_2$, continue in this manner until we reach the endpoint of $e_k$, and take the unique path in $T$ from there to $v_*$.  

We claim that $\phi$ is a surjective group homomorphism.  It is clear that this map is well-defined.  To see that it is a homomorphism, consider $\phi(e_1...e_j)\phi(e_{j+1}...e_k)$.  Because of the rule $...aba...\mapsto ...b...$ of Theorem \ref{thm:homotopy}, the path in $T$ from the endpoint of $e_j$ to $v_*$, and from $v_*$ to the start of $e_{j+1}$ is C-homotopic to the path in $T$ from the end of $e_j$ to the start of $e_{j+1}$.  Thus $\phi(e_1...e_j)\phi(e_{j+1}...e_k) = \phi(e_1...e_k)$ as desired.  To show that $\phi$ is surjective, suppose the sequence $v_0,...,v_k$ is the word of a loop in $G$.  Then $\phi((v_0,v_1)(v_1,v_2)...(v_{k-1},v_k))$ clearly maps to that loop.

Now we claim that $Ker(\phi)$ is precisely the subgroup of $\Gamma$ generated by $T$ and $\Bu$.  To see that $T\subset Ker(\phi)$, note that for any edge $e$ of $T$, $\phi(e)$ is the loop that stays completely within $T$ form the base point $v_*$ to the end of $e$, then back to $v_*$.  Since $T$ is a tree, the word of this loop has the form $v_0,...,v_{k-1},v_k,v_{k-1},...v_0$, and so by the rule $...aba\mapsto b$ of Theorem \ref{thm:homotopy}, this is equivalent to the trivial loop.

To see that $\Bu\subset Ker(\phi)$, suppose that if $e_1,e_2,e_3$ are edges of a triangle in $G$ with vertices $a,b,c$.  Then $\phi(e_1e_2e_3)$ is a loop whose word is has the form $v_0,...,v_i,a,b,c,v_i,...,v_0$, and by Theorem \ref{thm:homotopy}, because of the rule $...abc...\mapsto ...ac...$, this loop is equivalent to the identity, so this triangle belongs to $Ker(\phi)$.  In a similar way, due to the rule $...abcd...\mapsto ...ad...$, any square of $G$ belongs to $Ker(\phi)$.

Conversely, suppose $w=e_1...e_k\in Ker(\phi)$. Then $\phi(w)$ is equivalent to the trivial loop.  By Theorem \ref{thm:homotopy}, this means that the word of the the loop $\phi(w)$ can be obtained form the trivial word $v_*$ from a sequence of transformations using the rules in Theorem \ref{thm:homotopy}.  But these rules correspond precisely to adding triangles, square, or edges.  Thus there is a spanning tree $T$ such that $w$ is in the subgroup generated by $\Bu$ and $T$.  Then by Lemma \ref{lem:grouptree} we obtain the result.
\end{proof}
}

In \cite{DeVosFunkPivotto14}, DeVos, Funk, and Pivotto make use of the group we are calling $\Gamma(G,T,\Bu)$ to determine when a biased graph comes from a gain graph.  As a step in this, they prove that this group is isomorphic to the fundamental group of the topological space obtained by adding attaching a 2-cell to every circuit of $\Bu$ (see the proof of Theorem 2.1 of \cite{DeVosFunkPivotto14}).  Thus we have the following.

\begin{corollary}
Let $K$ be the 2-cell complex obtained by attaching disc to each circuit of $\Bu$.  Then $\pi_1(G,\Bu)$ is isomorphic to the fundamental group of this topological space.  In particular, the fundamental group $\pi_1(G)$ of \cite{Grigoryan_homotopy} is isomorphic to the fundamental group of the space obtained by attaching a disc to each triangle and square of $G$.
\end{corollary}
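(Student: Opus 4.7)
The plan is to chain together two isomorphisms that have already been established (one in this paper, one cited). First I would recall Theorem~\ref{thm:pi}, which gives $\pi_1(G,\Bu)\cong \Gamma(G,T,\Bu)$ for any spanning tree $T$ of $G$, where $\Gamma(G,T,\Bu)=\langle \vec E \mid T,\Bu\rangle$. This already converts the graph-homotopy definition of $\pi_1(G,\Bu)$ into a purely combinatorial group presentation.

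Next I would invoke the result of DeVos, Funk, and Pivotto quoted just before the corollary: they prove (inside the proof of Theorem~2.1 of \cite{DeVosFunkPivotto14}) that $\Gamma(G,T,\Bu)$ is isomorphic to the topological fundamental group of the 2-complex $K$ built by attaching a 2-cell along every circuit of $\Bu$. Composing the two isomorphisms then yields $\pi_1(G,\Bu)\cong \pi_1(K)$, which is the first assertion of the corollary. For the ``in particular'' clause, I would simply specialize to the case where $\Bu$ is the set of all triangles and squares of $G$, since in that case Theorem~\ref{thm:homotopy} identifies $\pi_1(G,\Bu)$ with the fundamental group $\pi_1(G)$ of \cite{Grigoryan_homotopy}.

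If I wanted to make the cited step more self-contained, I would sketch why $\Gamma(G,T,\Bu)\cong\pi_1(K)$: the 1-skeleton of $K$ is homotopy equivalent to a wedge of circles indexed by the edges of $\vec E/T$ (collapsing the spanning tree $T$ is a homotopy equivalence), so its fundamental group is free on these edges; attaching a 2-cell along each circuit $C\in\Bu$ kills exactly the word obtained by reading the edges of $C$ in $\vec E$; by van Kampen's theorem the resulting fundamental group is the quotient of the free group by the normal closure of these words, which is precisely the presentation defining $\Gamma(G,T,\Bu)$.

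The main obstacle here is essentially a bookkeeping one rather than a conceptual one: ensuring that the conventions used to identify oriented edges $xy$ with $(yx)^{-1}$, and the orientation of circuits, are consistent between the definition of $\Gamma(G,T,\Bu)$ used in this paper and the one in \cite{DeVosFunkPivotto14}. Once that compatibility is checked, the corollary follows immediately from concatenating the two isomorphisms.
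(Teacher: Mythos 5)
Your proposal is correct and matches the paper's approach exactly: the corollary is obtained by composing Theorem~\ref{thm:pi} with the result from the proof of Theorem~2.1 of \cite{DeVosFunkPivotto14}, and then specializing $\Bu$ to the triangles and squares. Your optional van Kampen sketch is a sound way to make the cited step self-contained, but the paper does not include it.
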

In particular, there is a canonical 1-1 correspondence between coverings of the 2-cell complex $K$, and coverings of $G$ preserving $\Bu$.
Therefore, we can now characterize the existence of an infinite connected covering of $G$ preserving $\Bu$.

\begin{corollary}
Let $G=(V,E)$ be a connected graph and let $\mathcal B$ be a set of circuits. T.f.a.e:
\begin{enumerate}
\item There exists no  infinite connected covering of $G$ preserving $\mathcal B$
\item  The fundamental group $\pi_1(G,\Bu)$ is finite.
\end{enumerate}
\end{corollary}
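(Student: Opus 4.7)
My plan is to deduce this corollary from the preceding one, which identifies $\pi_1(G,\Bu)$ with the fundamental group $\pi_1(K)$ of the 2-cell complex $K$ obtained from $G$ by attaching a disc to each circuit in $\Bu$, together with the canonical 1-1 correspondence between coverings of $G$ preserving $\Bu$ and coverings of $K$. Once these identifications are in place, the statement reduces to a standard fact from classical covering space theory applied to the CW-complex $K$.

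I would first observe that under the canonical correspondence, a covering $\widetilde G \to G$ is connected if and only if the corresponding covering $\widetilde K \to K$ is connected, and the number of sheets is preserved. Indeed, the 2-cells attached along circuits of $\Bu$ are contractible, so passing from $G$ to $K$ does not affect connected components of the total space, and the projection map on vertices/edges is unchanged, hence the cardinality of a fibre is the same. In particular, infinite-sheeted coverings on one side correspond to infinite-sheeted coverings on the other.

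For the direction $\pi_1(G,\Bu)$ finite $\Rightarrow$ no infinite connected $\Bu$-preserving cover: by classical covering space theory for CW-complexes, any connected covering of $K$ arises from a subgroup $H \leq \pi_1(K)$, and its number of sheets equals the index $[\pi_1(K):H]$. If $\pi_1(K) \cong \pi_1(G,\Bu)$ is finite, this index is bounded by $|\pi_1(K)| < \infty$, so every connected covering of $K$, and hence every connected $\Bu$-preserving cover of $G$, has only finitely many sheets.

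For the converse, suppose $\pi_1(G,\Bu) \cong \pi_1(K)$ is infinite. Since $K$ is a connected CW-complex and hence admits a universal cover $\widetilde K \to K$, this universal cover is connected and its number of sheets equals $|\pi_1(K)| = \infty$. Transporting back via the canonical correspondence yields an infinite connected covering of $G$ preserving $\Bu$, contradicting (1). The only subtle step is verifying that the correspondence respects connectedness and sheet number, but this follows because the attached 2-cells are simply connected and lift homeomorphically to each sheet; the main conceptual work has already been done in establishing $\pi_1(G,\Bu) \cong \pi_1(K)$ and the covering correspondence, so this corollary is essentially a direct translation of the topological statement.
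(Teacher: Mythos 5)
Your proposal is correct and follows essentially the same route as the paper: both reduce the statement to classical covering space theory for the 2-cell complex $K$ via the canonical correspondence between $\Bu$-preserving coverings of $G$ and coverings of $K$, together with the isomorphism $\pi_1(G,\Bu)\cong\pi_1(K)$. The paper phrases this more tersely through finiteness of the universal cover of $K$, while you additionally spell out the subgroup--index correspondence, but the underlying argument is the same.
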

\begin{proof}
Due to the 1-1 correspondence between coverings of the 2-cell complex $K$ and $\Bu$ preserving coverings of $G$, the first statement is equivalent to finiteness of the universal cover of $K$, which is equivalent to finiteness of the fundamental group of $K$.
This implies the corollary since the fundamental group of $K$ is isomorphic to $\pi_1(G,\mathcal B)$.
\end{proof}

Combining this corollary with Theorem~\ref{thm:CurvCover}, we immediately obtain the following relation between curvature and the fundamental group.


\begin{corollary}\label{cor:CurvHomotopy}
Suppose a finite graph satisfies $CD(K,\infty)$ for some $K>0$. Then, $\pi_1(G)$ is finite, where $\pi_1(G)$ denotes the fundamental group of $G$ from \cite{Grigoryan_homotopy}.
\end{corollary}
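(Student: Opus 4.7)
The plan is to assemble this corollary from three pieces that have already been established in the excerpt. First I would set $\Bu$ to be the collection of all triangles and squares of $G$, so that by the observation following Definition~\ref{def:pi} the fundamental group $\pi_1(G)$ from \cite{Grigoryan_homotopy} coincides with $\pi_1(G,\Bu)$. This identifies the object we want to bound in size with the more flexible group $\pi_1(G,\Bu)$ to which the earlier results apply.

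Next I would invoke Theorem~\ref{thm:CurvCover}: since $G$ satisfies $CD(K,\infty)$ for some $K>0$, there is no infinite covering of $G$ preserving all $3$- and $4$-cycles, i.e.\ no infinite covering preserving $\Bu$. In particular there is no infinite \emph{connected} covering of $G$ preserving $\Bu$, which is exactly the hypothesis of the corollary immediately preceding our target.

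Finally I would apply that preceding corollary, which gives the equivalence between nonexistence of an infinite connected $\Bu$-preserving covering and finiteness of $\pi_1(G,\Bu)$. Combining the two implications produces that $\pi_1(G,\Bu) = \pi_1(G)$ is finite. There is essentially no obstacle, since all the substantial content has been absorbed into Theorem~\ref{thm:CurvCover} (the Bonnet--Myers style diameter bound, giving finiteness of any locally isomorphic cover) and into the chain Theorem~\ref{thm:pi} $\to$ the topological interpretation via \cite{DeVosFunkPivotto14} $\to$ the covering/fundamental-group correspondence; the only thing to check carefully is the matching of the set $\Bu$ on both sides, namely that the triangles-and-squares convention used in the definition of $\pi_1(G)$ matches the $\Bu$ used in Theorem~\ref{thm:CurvCover}.
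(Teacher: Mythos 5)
Your proposal is correct and follows the paper's own argument exactly: the paper also obtains this corollary by combining Theorem~\ref{thm:CurvCover} (no infinite covering preserving triangles and squares under positive curvature) with the immediately preceding corollary equating nonexistence of such coverings with finiteness of $\pi_1(G,\Bu)$. Nothing further is needed.
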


We now characterize the abelianization of the fundamental group $\pi_1(G,\Bu)$.
\begin{proposition}\label{prop:ab}
Let $\Bu$ be any collection of cycles of $G$.  Then
\[\text{Ab}~\pi_1(G,\Bu) \cong \Cy(G,\Z)/\langle \Bu \rangle\]
where $\text{Ab}$ denotes the abelianization of the group, and $\langle \Bu \rangle$ denotes the set of all integer linear combinations of cycles in $\Bu$.
\end{proposition}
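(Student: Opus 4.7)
The plan is to reduce this to a quotient of a free abelian group using Theorem~\ref{thm:pi}, and then match that quotient with $\Cy(G,\Z)/\langle\Bu\rangle$ via fundamental circuits with respect to a chosen spanning tree.

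First, I would fix any spanning tree $T$ of $G$. By Theorem~\ref{thm:pi},
\[
\pi_1(G,\Bu)\;\cong\;\Gamma(G,T,\Bu)\;=\;\langle \vec E\mid T,\Bu\rangle,
\]
so the abelianization is computed by reinterpreting the same presentation in the category of abelian groups. The implicit identification $xy=(yx)^{-1}$ becomes $xy+yx=0$, cutting the generating set down to the free abelian group $\Z^E$ on one generator per unoriented edge (with an arbitrary fixed choice of orientation). The remaining relations set every tree edge to zero and impose the vanishing of the signed incidence vector of each $C\in\Bu$, so
\[
\mathrm{Ab}\,\pi_1(G,\Bu)\;\cong\;\Z^E/\langle T\cup\Bu\rangle.
\]

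Next, I would build a natural isomorphism $\Theta\colon\Cy(G,\Z)\to\Z^E/\langle T\rangle$ as the composition of the inclusion $\Cy(G,\Z)\hookrightarrow\Z^E$ with the projection $\Z^E\twoheadrightarrow\Z^E/\langle T\rangle\cong\Z^{E\setminus T}$. Injectivity of $\Theta$ follows because an integral cycle supported only on $T$ must be zero (trees have no circuits), and surjectivity follows from fundamental circuits: given $f\in\Z^{E\setminus T}$, the combination $\sum_{e\notin T}f(e)\,C_e$, where $C_e$ is the unique circuit in $T\cup\{e\}$, lies in $\Cy(G,\Z)$ and restricts to $f$ on $E\setminus T$. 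By construction, $\Theta$ sends the incidence vector of each $C\in\Bu\subset\Cy(G,\Z)$ to the class of the same vector in $\Z^E/\langle T\rangle$, so it identifies $\langle\Bu\rangle\subset\Cy(G,\Z)$ with the image of $\langle\Bu\rangle$ in $\Z^E/\langle T\rangle$. Passing to quotients then yields
\[
\Cy(G,\Z)/\langle\Bu\rangle\;\cong\;\Z^E/\langle T\cup\Bu\rangle\;\cong\;\mathrm{Ab}\,\pi_1(G,\Bu),
\]
which is the claim.

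The main obstacle I anticipate is purely bookkeeping: aligning the oriented-edge conventions in the presentation of $\Gamma(G,T,\Bu)$ with the incidence-vector conventions for $\Cy(G,\Z)$, so that the distinguished elements coming from $\Bu$ really do correspond under $\Theta$. Once this is set up consistently, the remainder is the classical fact that the fundamental circuits associated to $T$ give a $\Z$-basis of $\Cy(G,\Z)$ complementary to the subgroup of $\Z^E$ spanned by tree edges, and the result drops out by composing the two isomorphisms above.
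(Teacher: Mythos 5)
Your proposal is correct and follows essentially the same route as the paper: invoke Theorem~\ref{thm:pi}, abelianize the presentation $\langle \vec E\mid T,\Bu\rangle$, and identify $\mathrm{Ab}\,\langle \vec E\mid T\rangle$ with $\Cy(G,\Z)$ so that the relators from $\Bu$ become the subgroup $\langle\Bu\rangle$. The only difference is that you spell out, via fundamental circuits, the isomorphism the paper merely states as an observation, which is a welcome (and correct) elaboration rather than a new approach.
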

\begin{proof}
Let $T$ be a spanning tree.
Due to Theorem~\ref{thm:pi},
\[
\pi_1(G,\Bu) \cong  \langle \vec E \mid T,\Bu \rangle.
\]
Let $\Gamma:= \langle\vec E \mid T \rangle$.
We observe that $\text{Ab}~\Gamma \cong \Cy(G,\Z)$.
Abelianization of $\pi_1(G,\Bu)$  yields
\begin{align*}
\text{Ab}~\pi_1(G,\Bu) &\cong \langle \vec E \mid T,\Bu,\{sts^{-1}t^{-1}\} \rangle  
\cong  \frac{\langle \vec E \mid T, \{sts^{-1}t^{-1}\} \rangle }{ \langle \Bu \rangle } 
\cong \frac{\text{Ab}~\Gamma}{ \langle \Bu \rangle}
\cong \frac{\Cy(G,\Z)}{ \langle \Bu \rangle}
\end{align*}
which finishes the proof.
\end{proof}

It is known from Theorem 4.23 of \cite{Grigoryan_homotopy} that 
\[
\text{Ab}~\pi_1(G) \cong H_1(G,\Z).
\]
This result now also follows from Theorem \ref{thm:HomologyCycleSpace}, Theorem \ref{thm:pi}, and Proposition \ref{prop:ab} taken together, so we have come up with an alternative proof of this result.


\bibliographystyle{plain}
\bibliography{Bibliography}

\end{document}